\documentclass[final,reqno,onefignum,onetabnum]{siamltex}

\usepackage[letterpaper,top=2in, bottom=1.5in, left=1in, right=1in]{geometry}

\usepackage{epsfig,epsf,fancybox}
\usepackage{amsmath}
\usepackage{mathrsfs}
\usepackage{amssymb}
\usepackage{amsfonts}
\usepackage{graphicx}
\usepackage{color}
\usepackage[linktocpage,colorlinks,linkcolor=blue,anchorcolor=blue,citecolor=magenta,urlcolor=cyan,hypertexnames=false]{hyperref}
\usepackage{boxedminipage}
\usepackage{stmaryrd}
\usepackage{multirow}
\usepackage{booktabs}
\usepackage{relsize}
\usepackage{accents}
\usepackage{cite}
\usepackage{float}
\usepackage{algorithm}
\usepackage{algpseudocode}
\usepackage{breqn}
\usepackage{lineno}
\usepackage{mathtools}
\usepackage{xcolor}
\usepackage{enumerate}
\usepackage{subfigure}

\usepackage{stmaryrd}
\SetSymbolFont{stmry}{bold}{U}{stmry}{m}{n}

\def\R{\mathbb{R}}

\newcommand{\zz}{^{\top}}

\newcommand{\TT}{{\scriptscriptstyle{\, \textnormal{T}}}}

\newcommand{\lb}{\lambda}
\newcommand{\sign}{\mathrm{sign}}
\newcommand{\LL}{\mathcal{L}}
\newcommand{\sn}{{{\mathcal{S}^n}}}
\newcommand{\snA}{{{\mathcal{S}_A^n}}}

\newcommand{\tr}{\operatorname{tr}} 

\newcommand{\st}{\textnormal{s.t.}}

\newcommand{\revise}[1]{\textcolor{black}{#1}}

\flushbottom

\usepackage{mathtools}

\usepackage[normalem]{ulem} 




%


 







\newcommand{\bc}{\begin{center}}
\newcommand{\ec}{\end{center}}

\newcommand{\bdm}{\begin{displaymath}}
\newcommand{\edm}{\end{displaymath}}

\newcommand{\beq}{\begin{equation}}
\newcommand{\eeq}{\end{equation}}

\newcommand{\bfl}{\begin{flushleft}}
\newcommand{\efl}{\end{flushleft}}

\newcommand{\bt}{\begin{tabbing}}
\newcommand{\et}{\end{tabbing}}

\newcommand{\beqn}{\begin{eqnarray}}
\newcommand{\eeqn}{\end{eqnarray}}

\newcommand{\beqs}{\begin{align*}} 
\newcommand{\eeqs}{\end{align*}}  


\newtheorem{remark}{Remark}[section]

\begin{document}
	
		\title{A  {Graph-Partitioning Based} Continuous Optimization Approach to Semi-supervised Clustering Problems}
	
\author{Wei Liu\thanks{Department of Mathematical Sciences,
		Rensselaer Polytechnic Institute, US ({liuwei175@lsec.cc.ac.cn})}
	\and Xin Liu\thanks{State Key Laboratory of Mathematical Sciences, Academy of Mathematics and
		Systems Science, Chinese Academy of Sciences, and University of Chinese Academy of Sciences,
		China ({liuxin@lsec.cc.ac.cn})}
	\and Michael K. Ng\thanks{Department of Mathematics,
		Hong Kong Baptist University, Hong Kong, China ({michael-ng@hkbu.edu.hk})}
	\and Zaikun Zhang\thanks{School of Mathematics, Sun Yat-sen University, Guangzhou, China ({zhangzaikun@mail.sysu.edu.cn})}
}
	
	\date{\today}
	
	\maketitle
	\begin{abstract}
		Semi-supervised clustering is a basic problem in various applications. Most existing methods require knowledge of the ideal cluster number, which is often difficult to obtain in practice. Besides, satisfying the must-link constraints is {another} major challenge {for} these methods. In this work, we view the semi-supervised clustering task as a  partitioning problem on a graph associated with the given dataset,  where the {similarity} matrix  includes a scaling parameter 
			to reflect the must-link constraints. Utilizing a relaxation technique, we formulate the graph partitioning problem into a continuous optimization model that does not require the exact cluster number, but only an overestimate of it. 
			We then propose a block coordinate descent algorithm to efficiently solve this model, and 
			establish its
			convergence result. Based on the obtained solution,  we can construct the clusters
			that theoretically meet the must-link constraints  under mild assumptions.
			Furthermore, we verify the effectiveness and efficiency of our proposed method
				through comprehensive numerical experiments.
	\end{abstract}

	\begin{keywords}
		{clustering, semi-supervised learning,  graph partitioning, document clustering, block coordinate descent}
	\end{keywords}
	
	\begin{AMS}
		90C26, 90C30, 91C20, 94C15
	\end{AMS}
	\section{Introduction}\label{sec:intro}
	As a widely utilized approach in unsupervised or semi-supervised learning, clustering is a basic tool  for grouping similar objects together and separating dissimilar ones~\cite{jain1999data,jarvis1973clustering}.
	{It has applications in data mining~\cite{han2011data}, statistical analysis~\cite{von2007tutorial},  
		image analysis~\cite{du1999centroidal,arbelaez2011contour,felzenszwalb2004efficient,guo2003clustering}, and
		computer graphics~\cite{Weiss_1999},
		etc.}
	
	A classical unsupervised clustering method is $k$-means~\cite{ball1965isodata,macqueen1967some,steinhaus1956division}, 
	\revise{which partitions the given data by assigning points to the nearest centroid, and the resulting clusters are separated by decision boundaries that are linear hyperplanes in the case of Euclidean distance}
	~\cite{aurenhammer1991voronoi,bankman2008handbook}. Consequently, {many clustering problems cannot be handled by} $k$-means  (see examples in~\cite{ng2001spectral}).  
	In contrast, spectral clustering approaches~\cite{arias2017spectral,dhillon2004unified,hagen1992new,scholkopf1998nonlinear,shi2000normalized,von2007tutorial} can tackle problems where the underlying clusters are not linearly separable~\cite{arias2017spectral}. {For example, standard Spectral Clustering (SC)~\cite[Section 4]{von2007tutorial} consists of two steps.  The first step is to solve a continuous optimization problem
		\begin{equation}
			\label{eq:SC}
			\min_{D\zz   D=I_{k}} \operatorname{tr}\left(D\zz \mathcal{L}({A}){D}\right),
		\end{equation} where $k$ is the number of clusters, $I_k$ is the $k\times k$ identity matrix, $A$ is a nonnegative matrix indicating the similarity between the data points (called the similarity matrix in~\cite{von2007tutorial}), and $\mathcal{L}$ denotes the Laplacian operator defined by
		$$
		\mathcal{L}(A):=\mathrm{Diag}(A e_n)-A.
		$$ Here, $\mathrm{Diag}(z)$ refers to the diagonal matrix whose diagonal vector is $z$,  and $e_n\in \R^n$ represents the vector with all elements being 1. The second step is to extract the {resulting}
		clusters from~$D$ via a post-processing step such as $k$-means.  
A drawback of SC is its {heavy} dependence on $k$, which equals the number of clusters that SC {outputs}. Consequently, SC can determine the correct clusters only if we know the correct number of clusters beforehand. 

Semi-supervised clustering utilizes supervisory information to improve the clustering performance~\cite{bruzzone2006novel,castelli1995exponential,qin2019research}. 
The must-link constraint is one representative kind of supervisory information, which indicates that two data points must belong to the same cluster.
It has been shown recently that {enforcing} {correct} must-link {constraints}  helps prevent the occurrence of undesired clusters \cite{khoreva2014learning}. 
The constrained $k$-means and heuristic $k$-means~\cite{bair2013semi,miller2009geographic} are two semi-supervised clustering methods that 
can  
{deal with} supervisory information in the clustering process.
However, the same as $k$-means, these methods 
{are only suitable for cases in which}
the underlying clusters are linearly separable. 
To address this issue, semi-supervised spectral clustering has been proposed~\cite{bair2013semi,belkin2004regularization, jia2018semi,kamvar2003spectral,khoreva2014learning,von2007tutorial,wang2010flexible,yang2014unified, zhou2003learning,zhu2005semi2,zhu2005semi1}. As introduced in~\cite{jia2018semi}, the existing semi-supervised spectral clustering methods can be {divided} into two main categories. 
{Both of them modify {the first step} 
	of the standard SC  {using the supervisory information}.}  
	The first category of {these} methods {incorporates} such  information {into}  matrix $A$. For example,~\cite{kamvar2003spectral,von2007tutorial} assign specific values to $A_{ij}$ based on the presence of labels between data points $x_i$ and $x_j$.  
	The second category integrates {the supervisory information} directly into the variable~$D$. For example,~\cite{li2009constrained,wang2010flexible,yang2014unified} impose certain constraints with respect to $D$ on problem~\eqref{eq:SC}. However, {none of these methods}  
	{guarantees that the must-link constraints can be completely satisfied by}
	the resulting clusters~\cite{khoreva2014learning}, 
	and they also require 
	{a pre-known precise number of clusters $k$.} {There exist other continuous optimization approaches to semi-supervised clustering,  such as Symmetric NMF (SymNMF)~\cite{kuang2012symmetric} and Semi-supervised NMF (SemiNMF)~\cite{yang2014unified}, which solve matrix factorization problems to obtain clusters. Similar to SC, these approaches still depend on the number $k$ of clusters.}
	 
	{In this paper, we aim}
	to design a clustering method that does not {require}  the precise cluster number $k^*$. 
To this end, we view the clustering task as a  partitioning problem on a {weighted} graph associated with the given dataset.  
{We pursue to}
{obtain as many connected subgraphs as possible by selectively removing a few edges from the weighted graph with as small a sum of weights as possible.}
By introducing a model parameter to make a trade-off between the above-mentioned {two goals}, 
we propose a continuous optimization model for semi-supervised spectral clustering. 
{This} 
model only needs an {overestimate}  
of the ideal number of clusters.  
{Exploiting} the block structure of the proposed model, we develop an efficient block coordinate descent algorithm that guarantees finite convergence to a blockwise minimizer. 
Our model contains  the {similarity} matrix  of
the resulting graph   as a variable.
{Hence}, we can determine the clusters via a simple search algorithm on {this} {similarity} matrix. In contrast, {the mentioned SC-based and NMF-based methods} require multiple steps {including invoking an additional clustering algorithm} to determine the {resulting} clusters.  
{{Last but not least, theoretical analysis guarantees that} our approach {satisfies} the must-link {constraints} under mild conditions.} These {properties} make our method {more} practical and robust {than existing
approaches.}  
{Extensive numerical experiments on {synthetic datasets and document clustering}   demonstrate the {advantages} of our approach.}

\subsection{Notations}\label{sec:nota}
Suppose that $X=\{x_1,x_2,\ldots,x_n\}\subset \R^m$ is a dataset to be clustered.  We use $A\in\R^{n\times n}$ to represent the similarity matrix, which indicates the pairwise similarities between all pairs of data points in $X$. We assume that $A$ is nonzero and nonnegative in this paper. {Given the data $X$ and the  matrix $A$, we define the corresponding graph $\mathcal{G}(X, A)$, where the vertex set is $X$, and the edge set  is $\{(i, j) \mid A_{ij} > 0\}$,} {with $A$ working as the weighted adjacency matrix of the graph~\cite{von2007tutorial}.}

{We denote} $\sn=\{S\in\R^{n\times n}\mid S=S\zz\}$ and $\mathcal{S}_{0}^{n}=\{S \in \sn \mid S \geq 0, \operatorname{diag}(S)=0\}$, where {the inequality} $S\geq 0 $ 
{means that the entries}
of $S$ are not less than 0. Given a matrix $M\in\sn$, we use $\operatorname{supp}(M)$ to represent the index set of its nonzero {entries} and define $\mathcal{S}^n_M=\left\{S \in \sn \mid \operatorname{supp}(S) \subset \operatorname{supp}({M})\right\}$.  
We denote {$\lb_1(M)\le\lb_2(M)\le\cdots\le\lb_n(M)$ as}
the eigenvalues of $M$. 

Given matrices $M$ and $N$ with the same size, $M \ge N$ signifies that all entities of $M-N$ are nonnegative, and $M\succeq N$  means that $M-N$ is positive semi-definite.
$M\circ N$ stands for the Hadamard product of $M$ and $N$. 
$\lceil{M}\rceil$ is a matrix of the same size as $M$, with $\lceil{M}\rceil_{ij}=\lceil{M}_{ij}\rceil$ and $\lceil x\rceil$ referring to the smallest integer that is at most $x$. 
sign($M$) is also a matrix of the same size as $M$, with sign($M)_{ij}$ = sign($M_{ij}$), where we define sign(0) = 0. 
We use $\{0, 1\}^{n\times n}$ to represent the set of all $n\times n$ matrices whose elements are either 0 or 1. $\left\langle \cdot, \cdot \right\rangle$ refers to the Frobenius inner product of two matrices. 

\subsection{Organization}

The rest of this paper is organized as follows. In Section~\ref{sec:2}, 
we construct our continuous optimization model~\eqref{eq:model},  
and investigate its theoretical properties. Section~\ref{sec:3} develops  a block coordinate descent algorithm to solve the optimization problem in~\eqref{eq:model}, and analyzes its convergence. 
Then, we show some numerical experiments in Section~\ref{sec:4}. Finally, concluding remarks are presented in the last section.

\section{A Continuous optimization approach to semi-supervised clustering}
\label{sec:2}
{In this section, we construct our Continuous Optimization approach to Semi-Supervised Clustering (COSSC), which reformulates clustering as a graph partitioning task. 
It involves two main steps: first, solving a continuous model that integrates must-link constraints using an efficient algorithm; second, deriving clusters through a graph-based search algorithm.
	We  analyze the properties of the proposed model, and demonstrate its ability to satisfy the must-link constraints in the resulting clusters.}

\subsection{Model formulation and cluster identification}\label{sec:focus}
Suppose that the dataset $X$, the  similarity  matrix $A$, and the label set $\mathcal{J}$ 
are given,
where 
\begin{equation*}
\mathcal{J} : =\bigg\{(i,j) \mid \text{ there is a must-link constraint between $x_i$ and $x_j$}\bigg\}.
\end{equation*} 
In this paper, we assume that $A_{i j} > 0$ for all $(i, j) \in \mathcal{J}$. 
{Based on the matrix $A$ and the label set $\mathcal{J}$,}
we construct a modified  matrix $\bar{A}$ with
\begin{equation}
\label{eq:Aconstruct}
\bar{A}_{ij}:=
\left\{
\begin{aligned}
&p A_{ij}, \text{  if } (i,j) \in \mathcal{J},\\
&A_{ij}, \,\,\,\text{  otherwise},
\end{aligned}\right.
\end{equation}
where  
$p\geq 1 $ is a weight {representing the strength of the connection between} $x_i$ and $x_j$, reflecting the must-link constraints. Note that $A$ and $\bar{A}$ have the same support set, leading to the following lemma. 

\begin{lemma}
	Let $\bar{A}$ be defined as in \eqref{eq:Aconstruct}.
	We then have $\snA=\mathcal{S}^n_{\bar{A}}$ and
	\begin{align}
		\label{eq:rank22}
		\rank(\LL (A))=\rank(\LL (\bar A)).
	\end{align}
\end{lemma}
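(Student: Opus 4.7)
The lemma has two claims to verify. The plan is to first show that $A$ and $\bar{A}$ have identical supports, from which the equality of the two matrix sets follows immediately, and then deduce the rank equality by identifying the Laplacian's nullity with the number of connected components of the underlying graph.

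First I would observe that, by the construction in \eqref{eq:Aconstruct}, every entry $\bar{A}_{ij}$ is either $A_{ij}$ or $p A_{ij}$ with $p \geq 1 > 0$. Hence $\bar{A}_{ij} = 0$ if and only if $A_{ij} = 0$, which gives $\operatorname{supp}(\bar{A}) = \operatorname{supp}(A)$. Plugging this into the definitions of $\snA$ and $\mathcal{S}^n_{\bar A}$ yields the first claim $\snA = \mathcal{S}^n_{\bar A}$ at once.

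For the rank statement \eqref{eq:rank22}, my plan is to invoke the standard spectral-graph-theory fact that for a symmetric nonnegative weight matrix $W \in \snp$, the multiplicity of the eigenvalue $0$ of the Laplacian $\LL(W) = \Diag(W e_n) - W$ equals the number of connected components of the graph $\mathcal{G}(X, W)$; see, e.g., \cite{von2007tutorial}. Equivalently, $\rank(\LL(W)) = n - c(W)$, where $c(W)$ is the number of connected components. Since $A$ is nonnegative and, as already noted, $\operatorname{supp}(A) = \operatorname{supp}(\bar A)$, the edge sets of $\mathcal{G}(X, A)$ and $\mathcal{G}(X, \bar A)$ coincide, so $c(A) = c(\bar A)$, which immediately gives $\rank(\LL(A)) = \rank(\LL(\bar A))$.

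Neither step involves a real obstacle: the first is a one-line support computation, and the second is an appeal to a well-known result whose hypotheses (symmetry and nonnegativity of both $A$ and $\bar A$) are guaranteed by the standing assumption on $A$ together with $p \geq 1$. The only point to be careful about is to note explicitly that $p>0$ (which is the only place the assumption $p \geq 1$ is used here), so that scaling by $p$ does not alter the support.
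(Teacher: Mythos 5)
Your proof is correct and follows essentially the same route as the paper: both establish $\operatorname{supp}(A)=\operatorname{supp}(\bar A)$ from $p>0$ and then identify $n-\rank(\LL(\cdot))$ with the number of connected components of the associated graph (the paper cites Mohar's Theorem 2.1 where you cite the equivalent fact from the spectral clustering tutorial). No gaps.
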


\begin{proof}
	{
	Since  
	$\supp(A)=\supp(\bar{A})$, we have $\snA=\mathcal{S}^n_{\bar{A}}$. To prove the rank equality \eqref{eq:rank22}, 	we consider the graphs with $A$ and $\bar{A}$ being the adjacency matrices. Note that the connected subgraphs of the two graphs are identical, since they are determined by the support sets of $A$ and $\bar{A}$.
	According to~\cite[Theorem 2.1]{Mohar_1991}, the numbers of connected subgraphs  are $n- \rank(\LL (A))$ and $n- \rank(\LL (\bar A))$, respectively.
	Thus, we have the desired result.
	}
\end{proof}

Next, we approach the clustering task as a graph partitioning problem. The goal is to obtain   many connected subgraphs  
by selectively removing  {a few} edges from $\mathcal{G}(X, \bar{A})$ {while minimizing the total weight of the removed edges}.  
{To achieve this, }we define a {binary} matrix $Z\in \mathcal{S}^n_{\bar A} \cap\{0, 1\}^n = \snA \cap\{0, 1\}^n$ to indicate which edges {are to} be removed, with
\begin{equation*}
Z_{ij}:=\left\{
\begin{split}
0, & \quad \text{ if edge $(i,j)$ {is to} be removed from graph }\mathcal{G}(X, {\bar{A}}), \\
1, & \quad \text{ else.}
\end{split}
\right.
\end{equation*}
Note that the number of connected subgraphs in $\mathcal{G}(X, {\bar{A}\circ Z})$ is $n-\rank(\LL(\bar{A}\circ Z))$, which equals $n-\rank(\LL(A\circ Z))$ similar to~\eqref{eq:rank22}. In addition, $\sum_{i<j}\bar{A}_{ij}-{1\over 2}\tr(\bar{A}Z)$ is the {total} weight of the edges {to be removed}.
Therefore, we can formulate {the aforementioned goal} as an optimization problem:
\begin{equation}
\label{eq:modelORIGIN}
\begin{split}
\min_{Z} \;\; & (\rank(\LL(A\circ Z)) -n + d) + \beta \left(2\sum_{i<j}\bar{A}_{ij}-\tr(\bar{A}Z)\right) \\
\st \;\; & Z \in\snA \cap \{0, 1\}^{n\times n}, 
\end{split}
\end{equation}
where $\beta>0$ is a balancing parameter. Here, we add $d$ to the objective function, where $d$ is an overestimate of the ideal number $k^*$ of clusters and we assume that $  d\ll n$. 
The purpose of adding $d$ will be clear later.
	This flexibility in 
	$d$ distinguishes our approach from traditional spectral methods such as~\eqref{eq:SC},  because we do not require $d$ to be {exactly} $k^*$. {Indeed, {we will demonstrate}} in our numerical tests that our method is insensitive to the value of $d$ {as long as it is not too large}, consistently producing the ideal number of clusters with {appropriately} selected parameters {$\beta$ and $p$; see Remark~\ref{rem:d} for further discussion.}}

{Due to} the {discontinuous nature of the function} $\rank(\LL(A\circ Z)) $ and the {binary} set $\{0, 1\}^{n\times n}$,  problem~\eqref{eq:modelORIGIN} is a mixed-integer programming {which is still difficult to tackle.}
{{To address this difficulty}, we will approximate the discontinuous objective function by a continuous one and relax the binary set,
thereby yielding a continuous optimization model.} 

	In the low-rank case,  function ${\tr}(\cdot)$ is commonly used as  a convex relaxation of $\rank(\cdot)$~\cite{fazel2001rank}, but unfortunately, the matrix $\LL(A\circ Z)$ is not low-rank, which prevents the direct application of this relaxation technique. {To address this issue,  we introduce an auxiliary matrix $\widetilde{H}$ that helps create a low-rank matrix.}
Assume that the eigendecomposition of $\LL(A \circ Z)$ is given by \revise{$Q \Lambda Q^\top$}, where $Q$ is an orthogonal matrix, and the diagonal entries of $\Lambda$ are arranged in ascending order. Let $\widetilde{H}$ be the matrix consisting of the first $d$ columns of $Q$.
We then have
\begin{equation}
\rank( \LL(A\circ Z)\widetilde{H}\widetilde{H}\zz) \geq
 \rank(\LL(A\circ Z))-(n-d). 
\end{equation}
In addition, the matrix~\(\LL(A \circ Z)\widetilde{H}\widetilde{H}\zz\in\R^{n\times n}\) is symmetric, and has a rank of at most \(d\ll n\), making it a symmetric low-rank matrix.
Hence
\begin{align}
\rank( \LL(A\circ Z)\widetilde{H}\widetilde{H}\zz) \approx	\tr(\LL(A \circ Z)\widetilde{H}\widetilde{H}^\top)=
\min_{H^\top H = I_d} \tr(H^\top \LL(A \circ Z) H).
\end{align}
Given these relationships, we replace  $\rank(\LL(A\circ Z)) -(n-d)$ in  \eqref{eq:modelORIGIN} with     $\min_{H\zz H=I_d} \tr({H}\zz \LL(A\circ Z){H})$, obtaining
\begin{equation}
	\label{eq:modelORIGIN2}
	\tag{MIP}
	\begin{split}
		\min_{Z, H} \;\; & f(Z,H):=\tr\left(H\zz \LL(A\circ Z)H\right) - \beta\tr(\bar{A}Z) \\
		\st \;\,\, & Z \in\snA \cap \{0, 1\}^{n\times n},\,\, H\zz  H = I_{d},\,\, H\in\R^{n\times d}.
	\end{split}
\end{equation}%
Furthermore, we relax $\snA \cap \{0, 1\}^{n\times n}$ to $\snA \cap [0, 1]^{n\times n}$ and arrive at the continuous optimization problem:
\begin{equation}
\label{eq:model}
\tag{CP}
\begin{split}
\min_{Z,H} \;\; & f(Z,H) \\
\st \;\; & Z \in\snA \cap [0, 1]^{n\times n},\,\, H\zz  H = I_{d},\,\, H\in\R^{n\times d},
\end{split}
\end{equation}
{which is our continuous optimization model for semi-supervised clustering.} 
Comparing \eqref{eq:modelORIGIN} with~\eqref{eq:model}, the latter relaxes rank to trace and $\{0,1\}^{n\times n}$ to $[0,1]^{n\times n}$. It is worth noting here that the objective function $f(Z,H)$ is  a linear function with respect to $Z$. 

As will be discussed in Section~\ref{sec:3}, model~\eqref{eq:model} exhibits a block structure, which facilitates its solution using a block coordinate descent algorithm.
{Once} we {get} a solution $Z^*$ to problem~\eqref{eq:model}, the corresponding graph $\mathcal{G}(X, {\bar{A}\circ Z^*})$ is obtained. 
We finally identify the clusters in $X$ 
by running a search algorithm on $\mathcal{G}(X, {\bar{A}\circ Z^*})$,
such as Breadth-First-Search (BFS)~\cite{xu2003document}.
Notably, we can obtain the same clusters by executing a search algorithm on $\mathcal{G}(X,  Z^*)$, since $\supp(Z^*)= \supp(\bar{A}\circ Z^*).$

\subsection{Model analysis}

In this subsection, 
we first {expose} the relationships between 
our proposed models~\eqref{eq:model} and~\eqref{eq:modelORIGIN2}, 
\revise{in particular, the inclusion relation of their solution sets.}
We also provide sufficient conditions under which  the must-link constraints can be satisfied.

\begin{theorem}
\label{thm:x}
\textnormal{(a)} If $(Z^*, H^*) $ is a global minimizer of problem~\eqref{eq:model}, then $(\lceil Z^* \rceil, \, H^* ) $ is also a global minimizer of problem~\eqref{eq:modelORIGIN2}. 

\textnormal{(b)} If $(Z^*, H^*) $ is a global minimizer of problem~\eqref{eq:modelORIGIN2}, then $(Z^*, \, H^* ) $ is also a global minimizer of problem~\eqref{eq:model}.
\end{theorem}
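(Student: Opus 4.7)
The plan is to exploit two structural features of problems~\eqref{eq:model} and~\eqref{eq:modelORIGIN2}: (i) as explicitly noted after the definition of \eqref{eq:model}, the objective $f(Z,H)$ is linear in $Z$ for any fixed $H$; and (ii) the $Z$-feasible set of \eqref{eq:model}, namely $\snA\cap[0,1]^{n\times n}$, is a box-polytope whose vertices are precisely the matrices in $\snA\cap\{0,1\}^{n\times n}$, i.e.\ the $Z$-feasible set of \eqref{eq:modelORIGIN2}. Because $\snA\cap\{0,1\}^{n\times n}\subset\snA\cap[0,1]^{n\times n}$ and the $H$-sets coincide, the feasible set of \eqref{eq:modelORIGIN2} is contained in that of \eqref{eq:model}, hence the optimal value of \eqref{eq:model} is no larger than that of \eqref{eq:modelORIGIN2}. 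These two facts together turn both parts of the theorem into essentially coordinatewise bookkeeping.

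For part (a), I would first establish $f(\lceil Z^{*}\rceil,H^{*})=f(Z^{*},H^{*})$. Writing $f(Z,H^{*}) = f(0,H^{*}) + \sum_{(i,j)\in\operatorname{supp}(A),\,i<j} c_{ij}(H^{*})\,Z_{ij}$ via linearity in $Z$, the global optimality of $(Z^{*},H^{*})$ in \eqref{eq:model} forces coordinatewise optimality on each off-diagonal coordinate in $\operatorname{supp}(A)$: $Z^{*}_{ij}=0$ if $c_{ij}(H^{*})>0$, $Z^{*}_{ij}=1$ if $c_{ij}(H^{*})<0$, and $Z^{*}_{ij}$ is arbitrary in $[0,1]$ if $c_{ij}(H^{*})=0$. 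Applying $\lceil\cdot\rceil$ leaves the first two cases untouched and sends the third to $1$, where the coefficient vanishes, so the objective is preserved. Moreover $\lceil Z^{*}\rceil$ is symmetric with $\operatorname{supp}(\lceil Z^{*}\rceil)\subseteq\operatorname{supp}(Z^{*})\subseteq\operatorname{supp}(A)$ and $\{0,1\}$-valued, hence feasible for \eqref{eq:modelORIGIN2}; combining this feasibility with the value-inclusion from the first paragraph yields that it is globally optimal there.

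For part (b), let $(Z^{*},H^{*})$ be a global minimizer of \eqref{eq:modelORIGIN2} and pick any $(Z,H)$ feasible for \eqref{eq:model}. The coordinatewise rule above, applied with $H$ in place of $H^{*}$, produces a matrix $\widetilde Z\in\snA\cap\{0,1\}^{n\times n}$ that minimizes the linear map $f(\,\cdot\,,H)$ over $\snA\cap[0,1]^{n\times n}$; in particular $f(\widetilde Z,H)\le f(Z,H)$. Since $(\widetilde Z,H)$ is feasible for \eqref{eq:modelORIGIN2}, the optimality of $(Z^{*},H^{*})$ gives $f(Z^{*},H^{*})\le f(\widetilde Z,H)\le f(Z,H)$; as $(Z,H)$ was arbitrary in the feasible set of \eqref{eq:model}, $(Z^{*},H^{*})$ is a global minimizer there as well.

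The main subtlety, rather than anything computational, is bookkeeping for the support constraint $Z\in\snA$ and for symmetry: one has to confirm that the candidate $\lceil Z^{*}\rceil$ in part (a) and the vertex $\widetilde Z$ in part (b) both respect $\operatorname{supp}(\cdot)\subseteq\operatorname{supp}(A)$ and symmetry, which is automatic for $\lceil Z^{*}\rceil$ but requires a deliberate symmetric definition of $\widetilde Z$ index by index. The heart of the argument is otherwise the classical observation that a linear functional on a box-polytope attains its minimum at a vertex; no detailed manipulation of $\LL(A\circ Z)$ is required beyond the linearity of $f(\,\cdot\,,H)$.
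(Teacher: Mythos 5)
Your proposal is correct and follows essentially the same route as the paper: linearity of $f(\cdot,H)$ in $Z$ forces the coefficient of every fractional coordinate of $Z^*$ to vanish at a global minimizer, so $\lceil Z^*\rceil$ preserves the objective value, and part (b) reduces to the feasible-set inclusion combined with equality of the two optimal values. Your version is somewhat more explicit than the paper's (which dispatches (b) in one line and glosses over the symmetric pairing of the $(i,j)$ and $(j,i)$ coordinates), but the underlying argument is identical.
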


\begin{proof}
(a) Notice that $f$ is a linear function with respect to $Z$. For any $i, j\in\{1,2,\ldots,n\}$, if $Z_{ij}^*\in(0,1)$, then $(\nabla_Z f(Z^*, H^*))_{ij}=0$;
if $Z_{ij}^*\in\{0,1\}$, then $\lceil{Z^*_{ij}}\rceil= Z^*_{ij}$. {Combining the above two cases,} it holds  that 
$$
f(\lceil{Z^*}\rceil, \, H^* )-f({Z^*}, \, H^* ) =\langle \nabla_Z f(Z^*, H^*),  \lceil Z^*\rceil- Z^*\rangle =0.
$$ This indicates that $(\lceil{Z^*}\rceil, \, H^* ) $ is also a global minimizer of problem~\eqref{eq:modelORIGIN2}. 

(b) The second part follows directly, as every global minimizer of \eqref{eq:modelORIGIN2} is also feasible for~\eqref{eq:model}.
\end{proof}

The above theorem reveals the relationship between the global solution set  of model~\eqref{eq:model} and that of model~\eqref{eq:modelORIGIN2}.   
We now present two theorems concerning the global minimizers of problem~\eqref{eq:model}. The proofs of these two theorems are given in Appendices~\ref{appen:1} and~\ref{appen:2}, respectively.

\begin{theorem}\label{thm:thmor}
Let $d\ge n-\rank(\LL({A}))$. Each global minimizer $(Z^*, H^*)$ of problem~\eqref{eq:model} satisfies exactly one of the following two conditions:

\textnormal{(a)}\,\,$\rank(\LL({A} \circ Z^*) ) > n-d$; 

\textnormal{(b)} $\rank(\LL({A} \circ Z^*) ) = n-d$, and $Z^* \in \{0, 1\}^{n\times n}$.

\end{theorem}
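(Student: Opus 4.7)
The plan is to exploit the positive semidefiniteness of $\LL(A \circ Z)$ together with the variational characterization of its smallest $d$ eigenvalues. At any global minimizer $(Z^*, H^*)$ of~\eqref{eq:model}, $H^*$ must solve $\min_{H^\top H = I_d}\tr(H^\top \LL(A \circ Z^*) H)$, whose optimal value equals $\sum_{i=1}^{d}\lambda_i(\LL(A \circ Z^*))$; since $\LL(A \circ Z^*) \succeq 0$, this sum vanishes if and only if $\rank(\LL(A \circ Z^*)) \le n - d$. Hence we immediately have a dichotomy: either $\rank(\LL(A \circ Z^*)) > n - d$, which is condition (a), or $\rank(\LL(A \circ Z^*)) \le n - d$ and $f(Z^*, H^*) = -\beta\,\tr(\bar A Z^*)$, in which case we need to derive (b).

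For the case $\rank(\LL(A \circ Z^*)) \le n - d$, I would first show $Z^* \in \{0,1\}^{n\times n}$ by a rounding-up contradiction. The crucial observation is $\operatorname{supp}(\lceil Z^* \rceil) = \operatorname{supp}(Z^*)$, so $A \circ \lceil Z^* \rceil$ and $A \circ Z^*$ induce graphs with identical connected components. By the support-only dependence of the Laplacian rank already invoked in the preceding lemma via~\cite[Theorem 2.1]{Mohar_1991}, $\rank(\LL(A \circ \lceil Z^* \rceil)) = \rank(\LL(A \circ Z^*)) \le n - d$, so some orthonormal $\hat H$ drives $\tr(\hat H^\top \LL(A \circ \lceil Z^* \rceil) \hat H)$ to $0$. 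Since $\bar A_{ij} > 0$ on $\operatorname{supp}(A) \supset \operatorname{supp}(Z^*)$, any $Z^*_{ij}\in(0,1)$ would yield $\tr(\bar A \lceil Z^* \rceil) > \tr(\bar A Z^*)$ and hence $f(\lceil Z^* \rceil, \hat H) < f(Z^*, H^*)$, contradicting global optimality. So $Z^* \in \{0,1\}^{n\times n}$.

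To finish condition (b), I still need to rule out $\rank(\LL(A \circ Z^*)) < n - d$, and here the hypothesis $d \ge n - \rank(\LL(A))$ enters. This hypothesis says $\mathcal{G}(X, A)$ has at most $d$ components, so $\operatorname{supp}(Z^*) = \operatorname{supp}(A)$ would force $\rank(\LL(A \circ Z^*)) = \rank(\LL(A)) \ge n - d$, a contradiction. Therefore $\operatorname{supp}(Z^*) \subsetneq \operatorname{supp}(A)$, and I can pick $(i,j) \in \operatorname{supp}(A) \setminus \operatorname{supp}(Z^*)$ with $i < j$ and define $Z'$ by flipping $Z'_{ij} = Z'_{ji} = 1$ while leaving other entries unchanged. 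Adding one edge raises the Laplacian rank by at most $1$, so $\rank(\LL(A \circ Z')) \le n - d$; again some $H'$ zeroes out the first term of $f$, while the linear second term strictly decreases by $2\beta\,\bar A_{ij} > 0$, producing $f(Z', H') < f(Z^*, H^*)$ and the desired contradiction.

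The main obstacle is the careful bookkeeping to ensure that the rank of $\LL(A \circ Z)$ depends only on $\operatorname{supp}(Z) \cap \operatorname{supp}(A)$ and not on the specific positive weights, which is what makes both the rounding-up and the edge-insertion reductions valid. Once this is recognized, the two reductions are structurally parallel: each produces a feasible competitor that preserves the nullity required to zero out $\tr(H^\top \LL(A \circ Z) H)$ while strictly improving the linear term $-\beta\,\tr(\bar A Z)$, thereby contradicting global optimality and pinning down exactly one of conditions (a) and (b).
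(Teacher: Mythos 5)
Your proof is correct and rests on the same three ingredients as the paper's argument in Appendix~B: the Rayleigh--Ritz identity $\min_{H^\top H=I_d}\tr(H^\top\LL(A\circ Z)H)=\sum_{i=1}^d\lambda_i(\LL(A\circ Z))$ together with $\LL(A\circ Z)\succeq 0$, the fact (via \cite[Theorem 2.1]{Mohar_1991}) that $\rank(\LL(A\circ Z))$ depends only on $\supp(A\circ Z)$ and increases by at most one per added edge, and the strict monotonicity of $-\beta\tr(\bar A Z)$ in the entries of $Z$ over $\supp(A)$. The organization, however, is genuinely different. The paper builds a single monotone chain $Y_0=\lceil Z^*\rceil,\ldots,Y_L=\sign(A)$, uses a discrete intermediate-value argument (anchored at $\sign(A)$ by the hypothesis $\rank(\LL(A))\ge n-d$) to extract a binary $\bar Z\ge Z^*$ with $\rank(\LL(A\circ\bar Z))=n-d$, and then forces $\bar Z=Z^*$ by comparing objective values, obtaining binariness and the rank equality in one stroke. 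You instead run two independent local perturbations: rounding $Z^*$ up to $\lceil Z^*\rceil$ to establish $Z^*\in\{0,1\}^{n\times n}$, and inserting a single edge from $\supp(A)\setminus\supp(Z^*)$ to exclude $\rank(\LL(A\circ Z^*))<n-d$. The two proofs are logically equivalent --- your edge insertion is one step of the paper's chain, and your use of $d\ge n-\rank(\LL(A))$ to guarantee $\supp(Z^*)\subsetneq\supp(A)$ plays exactly the role of the paper's anchor at $\sign(A)$ --- but your split is more modular and makes it clearer which hypothesis is responsible for which half of conclusion (b).
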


\begin{remark}\label{rem:d}
\textit{By~\cite[Theorem 2.1]{Mohar_1991}, $n-\rank(\LL(\bar{A}\circ Z^*))$ is the number of connected subgraphs in $\mathcal{G}(X, {\bar{A}\circ Z^*})$, which equals the number of the output clusters.
By Theorem~\ref{thm:thmor}, this number {is at most} $d$. 
Moreover, we observe in our numerical experiments that 
this number often equals 
the ideal number of clusters $k^*$ 
when $d\ge k^*$  and is close to $k^*$ 
(see Sections~\ref{sec:graphdiffn} and~\ref{sec:docudiffn}).
}
\end{remark}

\begin{theorem}
\label{thm:bigb}
Given $n-\rank(\LL({A})) \le d <n $, {the following statements hold.} 

\textnormal{(a)} ~If $\beta>1$, then each global minimizer $(Z^*, H^* ) $ of problem~\eqref{eq:model} satisfies $Z^* =\sign(A)$.

\textnormal{(b)}~There exists $\overline{\beta}>0$ such that for any {$\beta<\overline{\beta}$,} each global minimizer $(Z^*, H^* )$ of problem~\eqref{eq:model} satisfies 
$\rank(\LL({A} \circ Z^*) ) = n-d$ and $Z^* \in \{0, 1\}^{n\times n}$.
\end{theorem}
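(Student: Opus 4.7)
For part (a), my plan is to exploit the linearity of $f$ in $Z$. Rewriting gives
\[
f(Z,H)=\sum_{i<j}Z_{ij}\bigl(A_{ij}\|H_i-H_j\|^2-2\beta\bar A_{ij}\bigr).
\]
The geometric key is that whenever $H^{\top}H=I_d$, the matrix $HH^{\top}$ is an orthogonal projection, so
\[
\|H_i-H_j\|^2=(e_i-e_j)^{\top}HH^{\top}(e_i-e_j)\le\|e_i-e_j\|^2=2.
\]
Combined with $\bar A_{ij}\ge A_{ij}$ (from $p\ge 1$) and $\beta>1$, each coefficient satisfies $A_{ij}\|H_i-H_j\|^2\le 2A_{ij}<2\beta\bar A_{ij}$ whenever $A_{ij}>0$. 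Hence at any minimizer every $Z^*_{ij}$ with $A_{ij}>0$ must be pushed up to $1$, which together with $Z^*\in\snA$ forces $Z^*=\sign(A)$.

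For part (b), I would argue by contradiction via a path-propagation plus a singular-value perturbation. Assume some global minimizer $(Z^*,H^*)$ satisfies $\rank(\LL(A\circ Z^*))>n-d$; equivalently, the graph with edge set $\supp(Z^*)$ has $k<d$ connected components. First-order optimality on the box constraint (with $H=H^*$ fixed) says that each edge $(i,j)\in\supp(Z^*)$ satisfies $A_{ij}\|H^*_i-H^*_j\|^2-2\beta\bar A_{ij}\le 0$, hence $\|H^*_i-H^*_j\|^2\le 2\beta p$. Propagating along a path of length at most $n-1$ inside a single component $C$ yields $\|H^*_i-H^*_j\|\le(n-1)\sqrt{2\beta p}$ for all $i,j\in C$.

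Next I would introduce $\tilde H\in\R^{n\times d}$ whose $i$-th row is the average of the rows of $H^*$ over the component $C(i)$ containing $i$. A direct summation bounds $\|H^*-\tilde H\|_F^2\le 2n^3 p\beta$. Because $\tilde H$ is constant on each of the $k$ components, $\rank(\tilde H)\le k<d$, so $\sigma_d(\tilde H)=0$. Orthonormality of the columns of $H^*$ gives $\sigma_d(H^*)=1$, and Weyl's singular-value inequality then yields
\[
1=|\sigma_d(H^*)-\sigma_d(\tilde H)|\le\|H^*-\tilde H\|_{\op}\le\|H^*-\tilde H\|_F\le\sqrt{2n^3p\beta}.
\]
Choosing $\overline{\beta}:=1/(2n^3p)$ makes this impossible for every $\beta<\overline{\beta}$. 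Thus the hypothetical rank inequality fails, and invoking Theorem~\ref{thm:thmor} (which excludes ranks strictly below $n-d$ at a global minimizer) collapses us into case (b), giving both $\rank(\LL(A\circ Z^*))=n-d$ and $Z^*\in\{0,1\}^{n\times n}$.

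The main obstacle I expect is precisely the step that converts the pointwise KKT bounds along edges into a \emph{global} statement about the rank of $H^*$: the path-propagation costs a polynomial factor in $n$, and one has to trade a Frobenius-norm estimate against a singular-value gap. The argument is saved by the fact that the smallest singular value of $H^*$ is exactly $1$ (by orthonormality of its columns), which dominates the $O(\sqrt{\beta})$ approximation error precisely in the regime $\beta<\overline{\beta}$.
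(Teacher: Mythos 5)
Your proposal is correct, but both parts take genuinely different routes from the paper. For (a), the paper works with $\phi(Z):=\min_{H\zz H=I_d}f(Z,H)=\sum_{i=1}^d\lambda_i(\LL(A\circ Z))-\beta\tr(\bar AZ)$, rewrites it via traces, and compares $\phi(\sign(A))$ with $\phi(Z^*)$ using the Loewner ordering $\LL(A\circ\sign(A))\succeq\LL(A\circ Z^*)$ together with Weyl's eigenvalue inequality; you instead fix $H=H^*$ and observe that every coefficient $A_{ij}\|H^*_i-H^*_j\|^2-2\beta\bar A_{ij}$ on $\operatorname{supp}(A)$ is strictly negative because $\|H^*_i-H^*_j\|^2\le 2$ (the same projection bound the paper uses in the proof of Theorem~\ref{thm:mustlink2}), so linearity forces $Z^*=\sign(A)$. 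Your argument is more elementary, avoids the eigenvalue comparison entirely, and in fact shows $\sign(A)$ uniquely minimizes $f(\cdot,H)$ for \emph{every} feasible $H$. For (b), the paper sets $\overline{\beta}$ via the minimum positive eigenvalue of $\LL(A\circ Z)$ over all binary $Z$ (a finite but combinatorial quantity), shows $f(\lceil Z^*\rceil,H^*)\ge f(0,H^*)$ under the contradiction hypothesis, and uses Theorems~\ref{thm:x} and~\ref{thm:thmor} to conclude $(0,H^*)$ would be optimal, contradicting $d<n$; your route converts the first-order conditions on edges of $\mathcal{G}(X,A\circ Z^*)$ into the bound $\|H^*_i-H^*_j\|\le(n-1)\sqrt{2\beta p}$ within each component, compares $H^*$ to its componentwise average $\tilde H$ of rank below $d$, and contradicts $\sigma_d(H^*)=1$ by the singular-value perturbation bound. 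Both finish the same way by invoking Theorem~\ref{thm:thmor}. The trade-off is that your threshold $\overline{\beta}=1/(2n^3p)$ is explicit and computable at the cost of a polynomial-in-$n$ loss from the path propagation, whereas the paper's threshold avoids that loss but is defined through a combinatorial minimum that is generally not available in closed form.
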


\begin{remark}
\label{rem:equiv}
\textit{\textnormal{(a)}
Let $\beta>1$.
From Theorem \ref{thm:bigb}\textnormal{(a)}, we conclude that 
$$\mathcal{G}(X,\bar{A}\circ Z^*) =  \mathcal{G}(X,\bar{A}\circ  \sign(A)) =  \mathcal{G}(X,\bar{A}\circ  \sign(\bar A))= \mathcal{G}(X,\bar{A}).$$ This means that our model does not remove any edge from the graph $\mathcal{G}(X,\bar{A})$. {Therefore, it is not appropriate to take $\beta>1$, unless the clusters associated with the similarity matrix $\bar A$ are already very good.}}

\textit{
\textnormal{(b)} Let $0 <\beta < \overline{\beta}$. Theorem \ref{thm:bigb}\textnormal{(b)} ensures that $\rank(\LL({A} \circ Z^*) ) = n-d$. 
Then, we have
$
\tr \left((H^*)\zz \LL(A\circ Z^*) H^*\right) = 0$ by applying the Rayleigh-Ritz theorem.
Hence, problem~\eqref{eq:model} has the same global minimizers as the following problem:
\begin{equation}
	\label{eq:model2}
\begin{split}
	\min_Z \,\, &  - \tr(\bar{A}Z)\\
	\st \,\, & Z \in \{0,1\}^{n\times n} \cap \snA,\,\rank(\LL( A\circ Z) ) = n-d.
\end{split}
\end{equation}
Similar to~\eqref{eq:rank22}, we have $\rank(\LL({\bar A} \circ Z^*) ) = \rank(\LL( A\circ Z^*) ) = n-d$, since $\supp({\bar A} \circ Z^*)=\supp({A} \circ Z^*)$.
This means that the graph $\mathcal{G}(X, {\bar A} \circ Z^*)$ has exactly
$d$	
connected subgraphs. 
Thus, the goal of the model \eqref{eq:model2} is to obtain exactly
$d$	
connected subgraphs by selectively removing a few edges from $\mathcal{G}(X, {\bar{A} })$ 
so that the total weight of the removed edges is minimized.}
\end{remark}

Provided that $\beta p>2$, the following theorem provides sufficient conditions on a feasible point $(Z^*, H^*)$ of problem \eqref{eq:model} so that
the associated graph $\mathcal{G}(X, \bar{A}\circ Z^*)$
meets the must-link requirement. 
\begin{theorem}
\label{thm:mustlink2}
Let the label set $\mathcal{J}$ and $0\leq \epsilon<2\min_{(i,j)\in\mathcal{J}}A_{ij}$ be given. {Suppose that $\beta p > 2$, and that $\left(Z^*, H^*\right)$ satisfies $$Z^*\in \snA \cap \{0, 1\}^{n\times n}, \,\, (H^*)\zz H^*=I_d, \text{  and }f\left(Z^*, H^*\right) -  \min_{Z \in \snA \cap[0,1]^{n \times n}} f\left(Z, H^*\right) \leq \epsilon.$$ 
Then we have $Z^*_{{i}{j}}= 1$ for all $({i},{j})\in \mathcal{J}$}. 
\end{theorem}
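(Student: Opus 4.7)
My plan is to argue by contradiction. Suppose that $Z^*_{i_0 j_0} = 0$ for some $(i_0,j_0) \in \mathcal{J}$ (and hence $Z^*_{j_0 i_0} = 0$ by the symmetry of $Z^*$). Define the perturbation $\tilde Z \in \sn$ by $\tilde Z_{i_0 j_0} = \tilde Z_{j_0 i_0} = 1$ and $\tilde Z_{k\ell} = Z^*_{k\ell}$ for all other indices. Since $A_{i_0 j_0} > 0$ by the standing assumption on $\mathcal{J}$, one checks that $\tilde Z \in \snA \cap \{0,1\}^{n \times n} \subset \snA \cap [0,1]^{n \times n}$, so $\tilde Z$ is a feasible competitor for the minimization defining the $\epsilon$-optimality condition.

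The next step is to evaluate the finite difference $f(\tilde Z, H^*) - f(Z^*, H^*)$, exploiting linearity of $f(\cdot, H^*)$. The $-\beta \tr(\bar A Z)$ term contributes $-2\beta \bar A_{i_0 j_0} = -2\beta p A_{i_0 j_0}$, where the factor $p$ appears because $(i_0,j_0)\in \mathcal{J}$ triggers the reweighting in the definition of $\bar A$. For the Laplacian term, expanding
\begin{equation*}
\tr\bigl(H^\top \LL(A\circ Z)H\bigr) = \sum_k \Bigl(\sum_\ell A_{k\ell}Z_{k\ell}\Bigr)\|H_{k,:}\|^2 - \sum_{k,\ell} A_{k\ell}Z_{k\ell}\langle H_{k,:}, H_{\ell,:}\rangle
\end{equation*}
and collecting the contributions from the two entries that change yields exactly $A_{i_0 j_0}\|H^*_{i_0,:} - H^*_{j_0,:}\|^2$. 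Combining,
\begin{equation*}
f(\tilde Z, H^*) - f(Z^*, H^*) = A_{i_0 j_0}\bigl(\|H^*_{i_0,:} - H^*_{j_0,:}\|^2 - 2\beta p\bigr).
\end{equation*}

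The quantitative crux is then the bound $\|H^*_{i_0,:} - H^*_{j_0,:}\|^2 \le 2$. Since $(H^*)^\top H^* = I_d$, the matrix $P := H^*(H^*)^\top$ is an orthogonal projection, so $0 \preceq P \preceq I_n$. Consequently any $2\times 2$ principal submatrix $M$ of $P$ satisfies $0 \preceq M \preceq I_2$. Picking the submatrix indexed by $\{i_0, j_0\}$,
\begin{equation*}
\|H^*_{i_0,:} - H^*_{j_0,:}\|^2 = P_{i_0 i_0} + P_{j_0 j_0} - 2P_{i_0 j_0} = (1,-1)M(1,-1)^\top \le (1,-1)I_2(1,-1)^\top = 2.
\end{equation*}
The naive bound $\|H^*_{k,:}\|\le 1$ alone would only give $4$, so this step genuinely uses that $P$ is a projection, not merely that its diagonal lies in $[0,1]$; this is the main obstacle of the proof.

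Plugging the bound into the preceding identity and using $\beta p > 2$ gives $f(\tilde Z, H^*) - f(Z^*, H^*) \le -2A_{i_0 j_0}(\beta p - 1) < -2A_{i_0 j_0}$. Since $\tilde Z$ is feasible,
\begin{equation*}
f(Z^*, H^*) - \min_{Z \in \snA \cap [0,1]^{n \times n}} f(Z, H^*) \ge f(Z^*, H^*) - f(\tilde Z, H^*) > 2 A_{i_0 j_0} \ge 2\min_{(i,j)\in\mathcal{J}} A_{ij} > \epsilon,
\end{equation*}
which contradicts the hypothesis $f(Z^*, H^*) - \min_Z f(Z, H^*) \le \epsilon$. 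Hence $Z^*_{i_0 j_0} = 1$ for every $(i_0, j_0) \in \mathcal{J}$.
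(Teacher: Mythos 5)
Your proof is correct and follows essentially the same route as the paper's: the same contradiction via the rank-two perturbation $\tilde Z$, the same identity $f(\tilde Z,H^*)-f(Z^*,H^*)=A_{i_0j_0}\bigl(\|H^*_{i_0,:}-H^*_{j_0,:}\|^2-2\beta p\bigr)$ (the paper derives it from the explicit gradient formula rather than expanding the trace), and the same key bound $\|H^*_{i_0,:}-H^*_{j_0,:}\|^2\le 2$ combined with $\beta p>2$ and $\epsilon<2\min_{(i,j)\in\mathcal{J}}A_{ij}$. Your justification of the bound via the projection $P=H^*(H^*)^\top\preceq I_n$ is a welcome elaboration of a step the paper only asserts.
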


\begin{proof}
Assume, for contradiction, that there exists a pair $(\bar{i},\bar{j})\in \mathcal{J}$ such that $Z^*_{\bar{i}\bar{j}} =0$. 
Construct $\bar{Z}$ by
\begin{equation}
\bar{Z}_{ij}:=
\left\{\begin{aligned}
	&1, \quad\text{  if } (i,j)=(\bar{i},\bar{j}) \text{  or } (i,j)=(\bar{j},\bar{i}),\\
	& {Z}^*_{ij},\text{  otherwise}.
\end{aligned}\right.
\end{equation}
Then $\bar{Z}$ lies in $\snA \cap\{0,1\}^{n \times n}$, since $Z^* \in \mathcal{S}_A^n \cap\{0,1\}^{n \times n}$ and  $\mathcal{J} \subset \supp(A)$. 
Let $Q:=H^*(H^*)\zz$. As is given by equation~\eqref{eq:derivef}, the gradient $\nabla_Z f(Z, H^*)$ satisfies  
\begin{align}
	\label{eq:gradient}
	(\nabla_Z f(Z, H^*)) _{ij} = A_{ij}(Q_{ii}-Q_{ij})-\beta\bar{A}_{ij}, \text{  for all }i,j=1,2,\ldots,n.
\end{align} {Since $\bar{Z}$ differs from $Z^*$ only at positions $(\bar{i}, \bar{j})$ and $(\bar{j}, \bar{i})$, and $\bar{Z}_{\bar{i}\bar{j}} - Z^*_{\bar{i}\bar{j}} = \bar{Z}_{\bar{j}\bar{i}} - Z^*_{\bar{j}\bar{i}} =1$, we know from \eqref{eq:gradient} that}
\begin{equation}
\label{eq:gap1}
\begin{aligned}
	f(\bar{Z},H^*)-f(Z^*,H^*) = &\nabla_Z f(Z, H^*) _{\bar{i}\bar{j}} (\bar{Z}_{\bar{i}\bar{j}}- {Z}^*_{\bar{i}\bar{j}}) + (\nabla_Z f(Z, H^*) )_{\bar{j}\bar{i}} (\bar{Z}_{\bar{j}\bar{i}}- {Z}^*_{\bar{j}\bar{i}})\\
	=& A_{\bar{i}\bar{j}}\left(Q_{\bar{i}\bar{i}}+Q_{\bar{j}\bar{j}}-Q_{\bar{i}\bar{j}}-Q_{\bar{j}\bar{i}}\right) - 2\beta \bar{A}_{\bar{i}\bar{j}}.
\end{aligned}
\end{equation}{Noting that $Q=H^*(H^*)\zz$ and $(H^*)\zz H^*=I_d$}, we get $Q_{\bar{i}\bar{i}}+Q_{\bar{j}\bar{j}}-Q_{\bar{i}\bar{j}}-Q_{\bar{j}\bar{i}}= \|H^*_{\bar{i},:}-H^*_{\bar{j},:}\|_2^2\leq 2$. Together with~\eqref{eq:gap1}, this fact implies that
\begin{equation*}
\begin{aligned}
	f(\bar{Z},H^*)-f(Z^*,H^*)
	\leq & 2 A_{\bar{i}\bar{j}} - 2\beta \bar{A}_{\bar{i}\bar{j}}= 2 A_{\bar{i}\bar{j}} - 2\beta p {A}_{\bar{i}\bar{j}} = 2A_{\bar{i}\bar{j}} (1-\beta p).
\end{aligned}
\end{equation*}
By the definitions of ${(\bar{i},\bar{j})}$ and $\epsilon$, we have $A_{\bar{i}\bar{j}}>{\epsilon}/{2}> 0$. Recalling that $\beta p>2$, we have
$$  2A_{\bar{i}\bar{j}} (1-\beta p) < -2A_{\bar{i}\bar{j}}<-\epsilon,$$ 
which then implies $f(\bar{Z},H^*)-f(Z^*,H^*)<-\epsilon$.
This contradicts the  {assumption} 
and completes the proof. 
\end{proof}

\begin{remark}
\label{rem:8}
\textit{We note that~\cite[Section 5]{kamvar2003spectral} proposes   incorporating the must-link constraints into $A$ by  }
\begin{equation}
\label{eq:Acon}
\widetilde{A}_{ij}:= 
\left\{
\begin{aligned}
	&1, \quad\text{  if } (i,j) \in \mathcal{J},\\
	&A_{ij}, \text{  otherwise}.
\end{aligned}\right.
\end{equation} 
\textit{It then performs spectral clustering using $\widetilde{A}$ as the similarity matrix.
Its approach is distinct from our construction in \eqref{eq:Aconstruct}, where we scale the original weights in $A$ using a parameter $p\ge 1$.
In addition, the method in \cite{kamvar2003spectral} does not present the theoretical guarantee that the must-link constraints can be satisfied in the final clustering results.
In contrast, as {demonstrated} in Theorem~\ref{thm:mustlink2}, our method ensures that $x_i$ and $x_j$ are always assigned to the same output cluster for all $({i},{j})\in \mathcal{J}$ under mild conditions.  
}
\end{remark}

\begin{remark}
	\label{rem:add}
	\revise{We emphasize that the condition $\beta p>2$ on the parameter $\beta$ is sufficient to ensure the satisfaction of the must-link constraints. However, this condition is not necessary, as will be demonstrated later through numerical experiments.}
\end{remark}

\section{{A} block coordinate descent algorithm for solving~\eqref{eq:model}}
\label{sec:3}
In this section, we introduce an algorithm specifically designed to solve problem~\eqref{eq:model} by leveraging its block structure. We prove that the algorithm converges to a blockwise 
$\epsilon$-minimizer (see Definition~\ref{def:mini}) within a finite number of iterations. Furthermore, we demonstrate that any iterate generated by the algorithm satisfies the must-link requirements.

\subsection{Algorithm framework}
We propose a block coordinate descent algorithm for solving problem~\eqref{eq:model}, which alternately updates the variables 
$Z$ and 
$H$.
Let $Z^{(0)}\in\snA \cap \{0,1\}^{n\times n}$ and $H^{(0)}\in\R^{n\times d}$ be the initial values. We now show how to update $(Z,H)$ at each iteration~$t\ge 0$.

Given  $H^{(t)}$, we first obtain $Z^{(t+1)}$ by solving a linear programming problem
\begin{equation}\label{eq:subZ}
\begin{aligned}
\min_Z \,\,&f(Z,H^{(t)})\\
\st\,\, &Z\in\snA \cap [0, 1]^{n\times n}.
\end{aligned}
\end{equation}
Since $f$ is linear with respect to $Z$, $Z^{(t+1)}$ defined as follows is a global solution to problem~\eqref{eq:subZ}:
\begin{equation}\label{eq:updateZ}
Z_{ij}^{(t+1)} :=
\begin{cases}
0,  &\text{  if~} G^{(t)}_{ij}> 0, \\
Z_{ij}^{(t)},   &\text{  if~} G^{(t)}_{ij}= 0, \\
1,   &\text{  if~} G^{(t)}_{ij}< 0,
\end{cases}
\end{equation}
where 
\begin{align}
	\label{eq:grag}
G^{(t)}:=\nabla_Z f(Z^{(t)}, H^{(t)} ) + \nabla_Z f(Z^{(t)}, H^{(t)})\zz.
\end{align}
By induction, 
the sequence $\{Z^{(t)}\}$ remains in the set $\snA \cap\{0, 1\}^{n\times n}$ throughout the iterations. This follows from the fact that   $Z^{(0)}\in \snA \cap\{0, 1\}^{n\times n}$ and $G^{(t)}\in \snA$, which can be checked according to the formulation of $\nabla_Z f$ given in Appendix \ref{appen:derive}.

Given $Z^{(t+1)}$, we obtain $H^{(t+1)}$ by minimizing $f(Z^{(t+1)},H)$ with respect to $H$,
		which can be formulated into
the optimization problem
\begin{equation}\label{eq:subH}
\begin{aligned}
\min_{H\zz   H=I_{d}} \,\,&\tr\left(H\zz \LL\left(A\circ Z^{(t+1)}\right)H\right).
\end{aligned}
\end{equation}
By Rayleigh-Ritz theorem,
the matrix $(u_1, \dots, u_{d})$ is a global solution to problem~\eqref{eq:subH},
where $\{u_1, \dots, u_{d}\}$ is
an orthonormal set, and $u_i$ is an eigenvector of $\LL(A\circ Z^{(t+1)})$ associated with $\lb_i(\LL(A\circ Z^{(t+1)}))$ for each $i=1,2,\ldots,d$. 
Instead of requiring a global solution,
our theory allows for an inexact computation of $H^{(t+1)}$, which satisfies
$f(Z^{(t+1)}, H^{(t+1)})\leq f(Z^{(t+1)}, H^{(t)})$, and
\begin{equation}
\label{eq:terminateH}
(H^{(t+1)})\zz H^{(t+1)} = I_d, \quad  0 \leq f(Z^{(t+1)}, H^{(t+1)})- \min_{H\zz   H=I_{d}} f(Z^{(t+1)},H) \leq \epsilon,
\end{equation}
where $\epsilon\geq 0$ is a prescribed precision.

The proposed  algorithm for solving problem~\eqref{eq:model} is summarized in Algorithm~\ref{alg:CP-BCD}.     

\begin{algorithm}[htbp]
\caption{{A} block coordinate descent algorithm for solving problem~\eqref{eq:model}}\label{alg:CP-BCD}
\begin{algorithmic}[1]
\State 
\textbf{Input:} $Z^{(0)}\in\snA \cap \{0,1\}^{n\times n}$, and an orthogonal matrix $H^{(0)}\in \R^{n\times d}$. Set $t:=0$. 

\For{$t=0,1,2,\ldots$}

\State  Calculate $Z^{(t+1)}$ by~\eqref{eq:updateZ}. 

\State If $f(Z^{(t)},H^{(t)})-f({Z}^{(t+1)},H^{(t)})  \leq {\epsilon}$, terminate the loop.

\State  Obtain $H^{(t+1)}$ by solving subproblem~\eqref{eq:subH} inexactly, so that~\eqref{eq:terminateH} holds.  

\EndFor

\State \textbf{Output}: $(Z^{(t)},H^{(t)})$.
\end{algorithmic}
\end{algorithm}

\subsection{Convergence analysis of Algorithm~\ref{alg:CP-BCD}}\label{sec:conver}
{We {show} that
	given any $\epsilon\ge0$, 
	Algorithm~\ref{alg:CP-BCD} outputs
	a	blockwise $\epsilon$-minimizer as defined below.  
\begin{definition}
	\label{def:mini}
	Let $\epsilon\ge 0$.
	We call $(Z^*,H^*)$ a {blockwise $\epsilon$-minimizer} of problem~\eqref{eq:model} if it holds that $Z^*\in\snA \cap \{0, 1\}^{n\times n}$,  $(H^*)\zz H^*=I_d$, and
	\begin{equation*} 
		\begin{aligned}
			&f\left(Z^*, H^*\right) -  \underset{Z \in \snA \cap[0,1]^{n \times n}}{\operatorname{min}} f\left(Z, H^*\right)\leq \epsilon, \quad f\left(Z^*, H^*\right) - \underset{H\zz H=I_{d}}{\operatorname{min}} f\left(Z^*, H\right)\leq \epsilon.
		\end{aligned}
	\end{equation*} 
\end{definition}
We note that any {blockwise $0$-minimizer} is a {blockwise minimizer} studied in~\cite{chen2019run}, and must be a KKT point.  
\begin{theorem}\label{thm:bcdconver}
Let $\epsilon\geq 0$. 
The following statements hold for Algorithm~\ref{alg:CP-BCD}.

\textnormal{(a)} ~For each $t$ before termination, either $f(Z^{(t)},H^{(t)})> f(Z^{(t+1)},H^{(t)})$ or $Z^{(t+1)}= Z^{(t)}$.

\textnormal{(b)} The algorithm terminates within finite number of iterations. Upon termination, it outputs a blockwise $\epsilon$-minimizer of problem~\eqref{eq:model}.
\end{theorem}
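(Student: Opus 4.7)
The plan for part (a) is to exploit the linearity of $f$ in $Z$ together with the symmetry of the iterates. Writing
$f(Z^{(t+1)}, H^{(t)}) - f(Z^{(t)}, H^{(t)}) = \langle \nabla_Z f(Z^{(t)}, H^{(t)}), Z^{(t+1)} - Z^{(t)}\rangle$
and using that $Z^{(t+1)} - Z^{(t)}$ is symmetric, I would symmetrize the inner product to obtain $\tfrac{1}{2}\langle G^{(t)}, Z^{(t+1)} - Z^{(t)}\rangle$. A direct entrywise inspection then shows that the rule \eqref{eq:updateZ} forces each term $G^{(t)}_{ij}(Z^{(t+1)}_{ij} - Z^{(t)}_{ij}) \le 0$, with strict inequality whenever $Z^{(t+1)}_{ij} \ne Z^{(t)}_{ij}$. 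A short induction is needed to justify that $Z^{(t)} \in \snA \cap \{0,1\}^{n\times n}$ throughout: the base case is the hypothesis on $Z^{(0)}$, and the inductive step uses that $G^{(t)}$ is symmetric and supported on $\operatorname{supp}(A)$ per the gradient formula in Appendix~\ref{appen:derive}. Summing the entrywise contributions yields either $Z^{(t+1)} = Z^{(t)}$ or strict decrease, which is exactly part (a).

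For finite termination in part (b), the idea is to chain the $Z$-update test with the non-increasing $H$-update: if the algorithm does not stop at step $t$, then combining the termination criterion with $f(Z^{(t+1)}, H^{(t+1)}) \le f(Z^{(t+1)}, H^{(t)})$ gives $f(Z^{(t)}, H^{(t)}) - f(Z^{(t+1)}, H^{(t+1)}) > \epsilon$. When $\epsilon > 0$, boundedness below of $f$ on the compact feasible set immediately forces only finitely many iterations. When $\epsilon = 0$, I would instead use that \eqref{eq:terminateH} yields $f(Z^{(t+1)}, H^{(t+1)}) = g(Z^{(t+1)})$ where $g(Z) := \min_{H\zz H = I_d} f(Z, H)$; then $g(Z^{(t)})$ strictly decreases along non-terminating iterations, forcing the iterates $Z^{(t)}$ to be pairwise distinct. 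Since $\snA \cap \{0,1\}^{n\times n}$ is a finite set, termination must still occur.

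Upon termination at step $t$, the first blockwise $\epsilon$-minimizer condition follows directly from the termination criterion together with the fact that $Z^{(t+1)}$, defined by \eqref{eq:updateZ}, is a global minimizer of the linear program \eqref{eq:subZ}. The second condition is inherited from \eqref{eq:terminateH} applied at the previous iteration, which ensures $H^{(t)}$ is an $\epsilon$-approximate minimizer of $f(Z^{(t)}, \cdot)$; the edge case $t = 0$ can be handled by requiring the initial $H^{(0)}$ to satisfy \eqref{eq:terminateH} with respect to $Z^{(0)}$, e.g.\ via a one-time eigendecomposition of $\LL(A \circ Z^{(0)})$. I expect the main obstacle to be the bookkeeping for the $\epsilon = 0$ case, since the strict-decrease argument no longer provides a uniform gap and one must appeal explicitly to the finiteness of $\snA \cap \{0,1\}^{n\times n}$; the rest reduces to a clean application of the linear structure in $Z$ that was already developed for part (a).
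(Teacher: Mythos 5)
Your proof is correct and follows essentially the same route as the paper: linearity of $f$ in $Z$ plus the sign structure of the update rule \eqref{eq:updateZ} for part (a); the per-iteration decrease exceeding $\epsilon$ combined with boundedness of $f$ below on the compact feasible set for the $\epsilon>0$ case; and finiteness of $\snA\cap\{0,1\}^{n\times n}$ (which the paper phrases equivalently as finiteness of the value set $\Omega$) for the $\epsilon=0$ case. Your explicit treatment of the $t=0$ edge case for the second blockwise condition is in fact slightly more careful than the paper, which tacitly assumes the output index $\bar{t}\ge 1$ so that \eqref{eq:terminateH} applies.
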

\begin{proof}
(a) Since $f(\cdot, H)$ is linear, by the definition $G^{(t)}$ in \eqref{eq:grag}, we have
\begin{equation}
\label{eq:fdiff}
f(Z^{(t)}, H^{(t)}) - f(Z^{(t+1)}, H^{(t)}) = \frac{1}{2} \left\langle G^{(t)}, Z^{(t)} - Z^{(t+1)} \right\rangle.
\end{equation}
According to~\eqref{eq:updateZ}, 
 $G^{(t)}_{ij}(Z^{(t)}_{ij} - Z^{(t+1)}_{ij}) >0$  for any $(i,j)$ such that $Z^{(t)}_{ij} - Z^{(t+1)}_{ij} \neq 0$. Therefore,~\eqref{eq:fdiff} ensures that $f(Z^{(t)}, H^{(t)}) - f(Z^{(t+1)}, H^{(t)}) >0$  if $Z^{(t+1)} \neq Z^{(t)}$.

(b)   {We consider two cases based on the value of $\epsilon$.}
We first consider the case with $\epsilon>0$. Since  
we have $f(Z^{(t)},H^{(t)})\geq f({Z}^{(t+1)},H^{(t)}) \geq f({Z}^{(t+1)},H^{(t+1)})$ for all $t$ before termination, the sequence $\{f(Z^{(t)},H^{(t)})\}$ is non-increasing. {Moreover, $\{f(Z^{(t)},H^{(t)})\}$ is bounded below, since $f$ is continuous and $\{(Z^{(t)},H^{(t)})\}$ is contained in the compact feasible set.}
{Therefore},  the stopping criterion $f(Z^{(t)},H^{(t)})-f({Z}^{(t+1)},H^{(t)})  \leq {\epsilon}$ must be met within finite number of iterations.
Suppose that Algorithm~\ref{alg:CP-BCD} terminates at the $\bar{t}$-th iteration.  
By the updating rule \eqref{eq:updateZ} of $Z$, it holds that
$$
f(Z^{(\bar{t})},H^{(\bar{t})}) - \min_{Z \in \snA \cap[0,1]^{n \times n}} f\left(Z, H^{(\bar{t})}\right) = f(Z^{(\bar{t})},H^{(\bar{t})}) -  f(Z^{(\bar{t}+1)},H^{(\bar{t})}) \leq \epsilon.
$$
Additionally, we have $ (H^{(\bar{t})})\zz{H}^{(\bar{t})}={I}_d$ and
$
f(Z^{(\bar{t})}, H^{(\bar{t})})-\min _{H^{\top} H=I_d} f(Z^{(\bar{t})}, H)\leq \epsilon
$
(see~\eqref{eq:terminateH}).
Thus, $(Z^{(\bar{t})},H^{(\bar{t})})$ is a  blockwise $\epsilon$-minimizer of problem~\eqref{eq:model}.

We next consider the case with $\epsilon=0$. {Due to the updating rules of $Z$ and $H$}, the sequence $\{f(Z^{(t)},H^{(t)})\}$ stays in the set $$\Omega:=\left\{\min_{H\zz   H=I_{d}}  f(Z,H)\mid  Z\in \snA  \cap  \{0,1\}^{n\times n}\right\},$$
which is  a finite set. In addition,
$\{f(Z^{(t)},H^{(t)})\}$ is non-increasing   {as mentioned in part (a)}, the sequence must eventually stabilize at some constant value after a finite number of iterations.  
{Once this occurs, the sequence $\{Z^{(t)}\}$ remains unchanged  for all subsequent iterations due to~(a).}
Suppose that the algorithm outputs $(Z^{(\bar{t})}, H^{(\bar{t})})$ when it terminates. It then holds that
\begin{equation}
\begin{aligned}
	&Z^{(\bar{t})}=Z^{(\bar{t}+1)} \in \underset{Z \in \snA \cap\{0,1\}^{n \times n}}{\operatorname{argmin}} f(Z, H^{(\bar{t})}), \quad H^{(\bar{t})} \in \underset{H\zz H=I_{d}}{\operatorname{argmin}} f(Z^{(\bar{t})}, H),
\end{aligned}
\end{equation}
which yields that $(Z^{(\bar{t})},H^{(\bar{t})})$ is a blockwise minimizer of problem~\eqref{eq:model}. 

{In both cases, the algorithm terminates after a finite number of iterations and outputs a blockwise $\epsilon$-minimizer of the problem. The proof is then completed.}
\end{proof}

{
	According to Theorem \ref{thm:bcdconver}(b), when $\beta p > 2$ and $\epsilon$ is small enough, the output point of Algorithm~\ref{alg:CP-BCD} satisfies the conditions stated in Theorem~\ref{thm:mustlink2}. Thus, the associated graph meets the must-link requirements. Indeed, as we will show in the following proposition, when $\beta p > 2$ and $\epsilon\ge 0$, all iterates of Algorithm~\ref{alg:CP-BCD} also adhere to the conditions specified in Theorem~\ref{thm:mustlink2}, and therefore meet the must-link requirements.
	\begin{proposition}
		\label{prop:alg}
		Suppose that $\beta p > 2$, $\epsilon\ge 0$, and the sequence $\{Z^{(t)}, H^{(t)}\}_{t\ge 0}$ is generated by Algorithm \ref{alg:CP-BCD}. Then $Z^{(t)}_{{i}{j}}= 1$ for all $({i},{j})\in \mathcal{J}$ and $t\ge 1$. 
	\end{proposition}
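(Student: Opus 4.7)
The plan is to track the update rule \eqref{eq:updateZ} of $Z$ at a generic iteration and show that for every $(i,j) \in \mathcal{J}$ the quantity $G^{(t)}_{ij}$ is strictly negative, which by \eqref{eq:updateZ} forces $Z^{(t+1)}_{ij} = 1$. Since the claim concerns $t \ge 1$, the initial value $Z^{(0)}$ (which need not satisfy the must-link conditions) is irrelevant.

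First, I would recall from \eqref{eq:gradient} in the proof of Theorem \ref{thm:mustlink2} the entrywise formula
\begin{equation*}
(\nabla_Z f(Z,H^{(t)}))_{ij} = A_{ij}(Q^{(t)}_{ii} - Q^{(t)}_{ij}) - \beta \bar{A}_{ij},
\end{equation*}
where $Q^{(t)} := H^{(t)}(H^{(t)})\zz$. Using the symmetry of $A$, $\bar{A}$, and $Q^{(t)}$, the symmetrized gradient from \eqref{eq:grag} becomes
\begin{equation*}
G^{(t)}_{ij} = A_{ij}\bigl(Q^{(t)}_{ii} + Q^{(t)}_{jj} - 2 Q^{(t)}_{ij}\bigr) - 2\beta \bar{A}_{ij}.
\end{equation*}

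Next, I would bound the bracketed quantity. Because $(H^{(t)})\zz H^{(t)} = I_d$ (guaranteed by \eqref{eq:terminateH} for $t \ge 1$), the matrix $Q^{(t)} = H^{(t)}(H^{(t)})\zz$ is an orthogonal projector, and in particular $0 \preceq Q^{(t)} \preceq I_n$. Applying this inequality to the vector $e_i - e_j$ yields
\begin{equation*}
Q^{(t)}_{ii} + Q^{(t)}_{jj} - 2 Q^{(t)}_{ij} = (e_i - e_j)\zz Q^{(t)} (e_i - e_j) \le \|e_i - e_j\|_2^2 = 2.
\end{equation*}
This is the one nontrivial step, though it is not really an obstacle; it is just the key observation that $HH\zz$ acts as an orthogonal projection when $H$ has orthonormal columns.

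Finally, for $(i,j) \in \mathcal{J}$ we have $A_{ij} > 0$ and $\bar{A}_{ij} = p A_{ij}$ by construction \eqref{eq:Aconstruct}, so combining the previous two displays gives
\begin{equation*}
G^{(t)}_{ij} \le 2 A_{ij} - 2\beta p A_{ij} = 2 A_{ij}(1 - \beta p) < 0,
\end{equation*}
where strict negativity uses the hypothesis $\beta p > 2$. The update rule \eqref{eq:updateZ} then forces $Z^{(t+1)}_{ij} = 1$. Since this argument applies uniformly to every iteration $t \ge 0$ (the assumption $(H^{(t)})\zz H^{(t)} = I_d$ is maintained by the algorithm), we conclude $Z^{(t)}_{ij} = 1$ for all $(i,j) \in \mathcal{J}$ and all $t \ge 1$, completing the proof.
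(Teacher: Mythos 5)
Your proof is correct, but it takes a different (and more self-contained) route than the paper. The paper's own proof is a two-line reduction: it observes that $Z^{(t+1)}$ from \eqref{eq:updateZ} is an exact global minimizer of the linear subproblem \eqref{eq:subZ}, so the triple $(Z^{(t+1)}, H^{(t)})$ satisfies the hypotheses of Theorem~\ref{thm:mustlink2} with $\epsilon = 0$, and the conclusion follows from that theorem as a black box. You instead inline the argument: you compute the symmetrized gradient entry $G^{(t)}_{ij} = A_{ij}(Q^{(t)}_{ii}+Q^{(t)}_{jj}-2Q^{(t)}_{ij}) - 2\beta\bar{A}_{ij}$, bound the projector term by $2$, and conclude $G^{(t)}_{ij} \le 2A_{ij}(1-\beta p) < 0$ for $(i,j)\in\mathcal{J}$, so the update rule itself forces $Z^{(t+1)}_{ij}=1$. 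The key inequality $Q_{ii}+Q_{jj}-2Q_{ij} = \|H_{i,:}-H_{j,:}\|_2^2 \le 2$ is exactly the one driving the paper's proof of Theorem~\ref{thm:mustlink2}, so the mathematical core is shared; what your version buys is transparency (you see directly that every must-link gradient entry is strictly negative, so the conclusion is a deterministic consequence of \eqref{eq:updateZ} rather than of an $\epsilon$-suboptimality bound), at the cost of duplicating work the paper has already done. One minor point worth making explicit: your argument needs $(H^{(t)})\zz H^{(t)} = I_d$ also at $t=0$, which holds because the algorithm's input requires $H^{(0)}$ to be orthogonal; you assert this in passing and it is indeed guaranteed.
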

	
	\begin{proof}
		Fix an iteration counter $t\ge 0$. Since $Z^{(t+1)}$ given in \eqref{eq:updateZ} is a global solution to problem~\eqref{eq:subZ},
		we know that $$Z^{(t+1)}\in\snA \cap \{0, 1\}^{n\times n}, \text{  and } f\left(Z^{(t+1)}, H^{(t)}\right) -  \underset{Z \in \snA \cap[0,1]^{n \times n}}{\operatorname{min}} f\left(Z, H^{(t)}\right)=0.$$ In addition,  
		we have $(H^{(t)})\zz H^{(t)}=I_d$ by the updating rule of $H$. Thus,   it follows from Theorem \ref{thm:mustlink2} that $Z^{(t+1)}_{{i}{j}}= 1$ for all $({i},{j})\in \mathcal{J}$.  
	\end{proof}
}




\section{Numerical experiments}
\label{sec:4}

In this section, we {demonstrate} the numerical performance of COSSC compared with {several} {widely-used} approaches.
All experiments are conducted by MATLAB R2017a and run on a MacBook Pro (13-inch, 2018) with 2.3 GHz Intel Core i5 and 8 GB of RAM.

We show the default settings of our experiments in Subsection~\ref{sec:default}. Our numerical tests are based on two types of data. 
In Subsection~\ref{sec:graph}, we illustrate the numerical results of COSSC on six synthetic datasets, each represented as a graph; this section focuses on the performance of COSSC with difference settings of $\beta$ and $p$, which  helps us set these two parameters for further experiments.
In Subsection~\ref{sec:docu}, we test the effectiveness of COSSC on a real-world large-scale dataset from the TDT2 test set; specifically, we compare COSSC with some other methods under different values of $d$ and varying numbers of must-link constraints; we also  compare the CPU time across different methods.


\subsection{Default settings}\label{sec:default}
Given a data matrix $X$, we take the matrix 
$A=\operatorname{Diag}(\widetilde{A} e_n)^{-1} \widetilde{A}$, where
$\widetilde{A}$ is the similarity matrix of the $k_n$-nearest neighbor graph \cite{brito1997connectivity},  with $k_n=\lceil \log(n)\rceil$ as suggested in~\cite{von2007tutorial}. Specifically, we let $\widetilde{A}_{ij}= \exp(-{||x_i-x_j||^2
_2}{\sigma_i^{-1}\sigma_j^{-1}})$ if $v_i$ is among the $k_n$-nearest neighbors of $v_j$ and $v_j$ is among the $k_n$-nearest neighbors of $v_i$; we let $\widetilde{A}_{ij}=0$ otherwise. Here, $\sigma_i$ represents the distance between $x_i$ and its $7$th neighbor as suggested in~\cite{von2007tutorial, zelnik2005self}. 

{Unless otherwise specified,} we set $p=10$  in~\eqref{eq:Aconstruct} and $\beta={(d-1)}/{n}$ in \eqref{eq:model}, which are suggested by the results that will be presented in Section \ref{sec:graph}.
{Although this} setting {may not} satisfy \revise{the condition $\beta p>2$,} as required by Theorem~\ref{thm:mustlink2},
numerical results in Subsection~\ref{sec:442} \revise{demonstrate} that 
the must-link constraints are \revise{still satisfied} under this default setting.
{In Algorithm~\ref{alg:CP-BCD},
we solve~\eqref{eq:subH} inexactly by {invoking} the ``eigs'' function in MATLAB, {which essentially calls the built-in ARPACK package.}
Besides, we set 
$\epsilon = 10^{-3}$ in Algorithm~\ref{alg:CP-BCD}. In addition to the stopping criterion in Line 4 of the algorithm, we terminate the calculation whenever
 $t=500$. We initialize Algorithm~\ref{alg:CP-BCD} with $
Z^{(0)}=  \sign(A).
$}

We 
\revise{compare} COSSC with the following existing  approaches to clustering, all of which require the precise number of clusters $k$ as an input, as mentioned in Section \ref{sec:intro}. 
\begin{enumerate}
\item Variants of $k$-means. We select  constrained $k$-means
and heuristic $k$-means~\cite{bair2013semi,miller2009geographic}. The code is downloaded from MathWorks File Exchange\footnote{https://www.mathworks.com/matlabcentral/fileexchange/117355-constrained-k-means}.
\item {Semi-supervised spectral clustering. We compare Algorithm~\ref{alg:CP-BCD} with SCA\footnote{In cases when there is no supervisory information, the steps of SCA are the same as those of standard spectral clustering, as introduced in Section 1.}, a semi-supervised spectral clustering method introduced in~\cite[Section 5]{kamvar2003spectral}.}
To apply SCA to the given dataset, we 
call  the ``spectralcluster'' function from the ``Statistics and Machine Learning Toolbox'' in MATLAB as follows: ``spectralcluster($\bar{A}$, $k$, `DISTANCE', `PRECOMPUTED')'', with $\bar{A}$ defined in~\eqref{eq:Acon}. 

\item Variants of NMF.  We choose Symmetric NMF (SymNMF)~\cite{kuang2012symmetric} and Semi-supervised NMF (SemiNMF)~\cite{yang2014unified} for the comparison. 
The code is downloaded from GitHub\footnote{https://github.com/dakuang/symnmf, https://github.com/zhanghuijun-hello/community-detection-code}.
\end{enumerate}
We run all the code mentioned above under the default settings.

Finally, 
{we introduce the} performance indicators. 
We {denote} the CPU time in seconds {as} ``time''. 
{To evaluate the clustering performance,} we use accuracy (ACC, the percentage of correctly clustered items), and the normalized mutual information~\cite{Estevez2009nmi} (NMI, a measure of the similarity between the ideal clusters and the generated clusters). 
Both of this indicators range from 0 to 1, with a larger NMI or ACC indicating a better clustering result (see more discussions in~\cite[Appendix B]{lancichinetti2009detecting}). 
{In addition, }{we use RMV to represent the Ratio of  Must-link {constraints} that are Violated by the output clusters, namely, RMV$ = {|\mathcal{J}_{{Z}^*}|}/{|\mathcal{J}|}$, where 
$
\mathcal{J}_{{Z}^*}: =\{(i,j) \mid Z^*_{ij}=0, (i,j)\in \mathcal{J}\}
$.}

\subsection{Numerical results on synthetic datasets}
\label{sec:graph}

In this subsection, we {study} the performance of COSSC in solving clustering problems with synthetic datasets, focusing on how different choices of
$\beta$ and $p$  influence the behavior of COSSC.
Specifically,
we {select} several synthetic datasets shown in Figure~\ref{pic:8graph}, where the ideal clusters are indicated by colors, with the ideal number $k^*$ of clusters being {3, 3, 3, 3, 2, and~5, respectively}. 
\begin{figure}[htbp!]
\centering
\subfigure[\tiny Dataset 1: three circles]{\includegraphics[height=25mm]{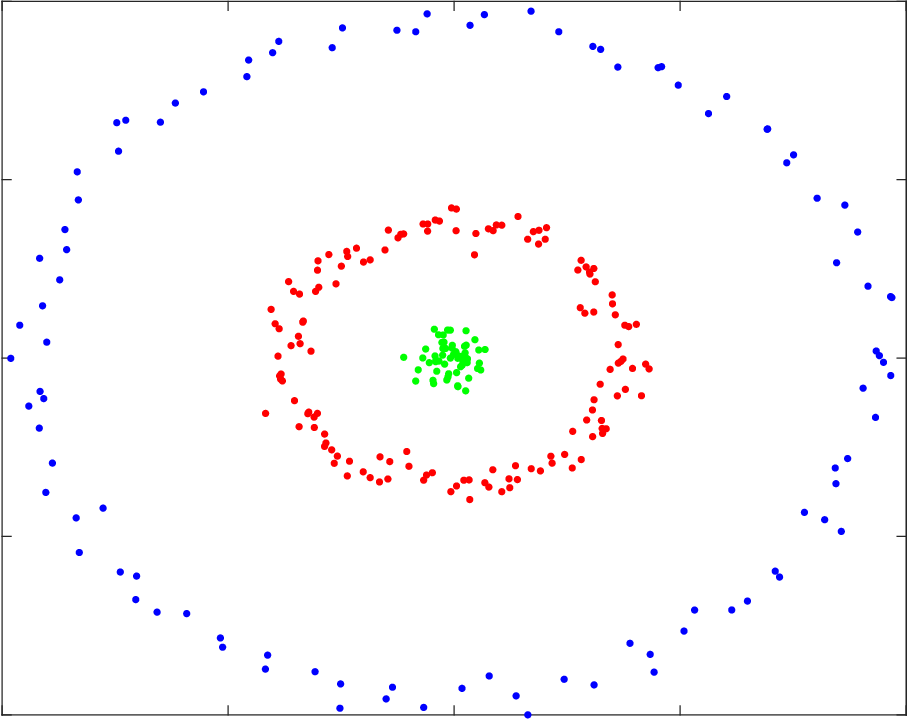}}\quad\quad\quad
\subfigure[\tiny Dataset 2: smile faces]{\includegraphics[height=25mm]{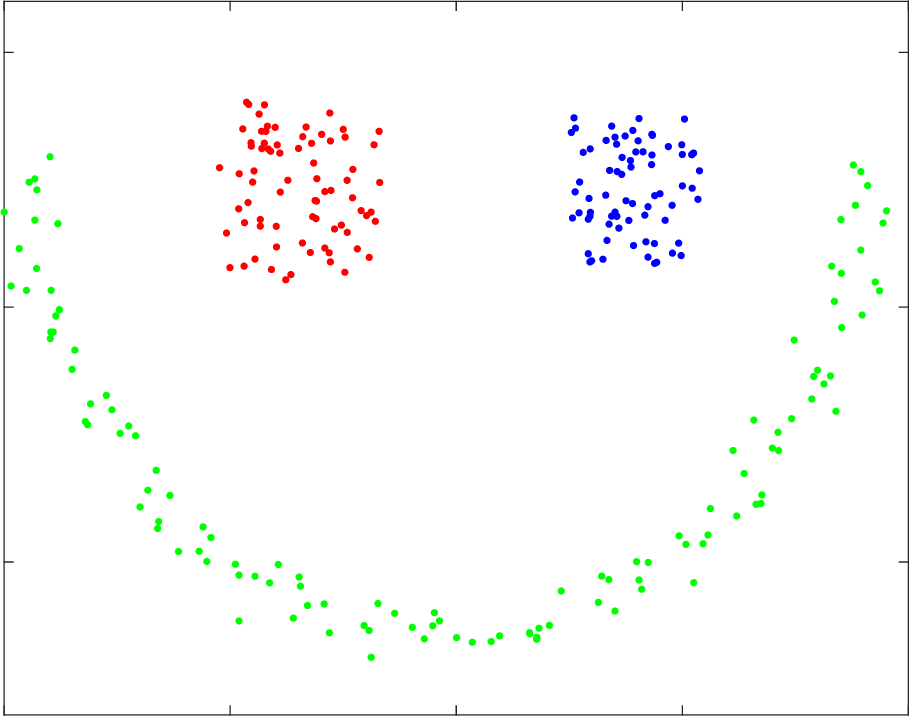}}\quad\quad\quad
\subfigure[\tiny Dataset 3: three parts]{\includegraphics[height=25mm]{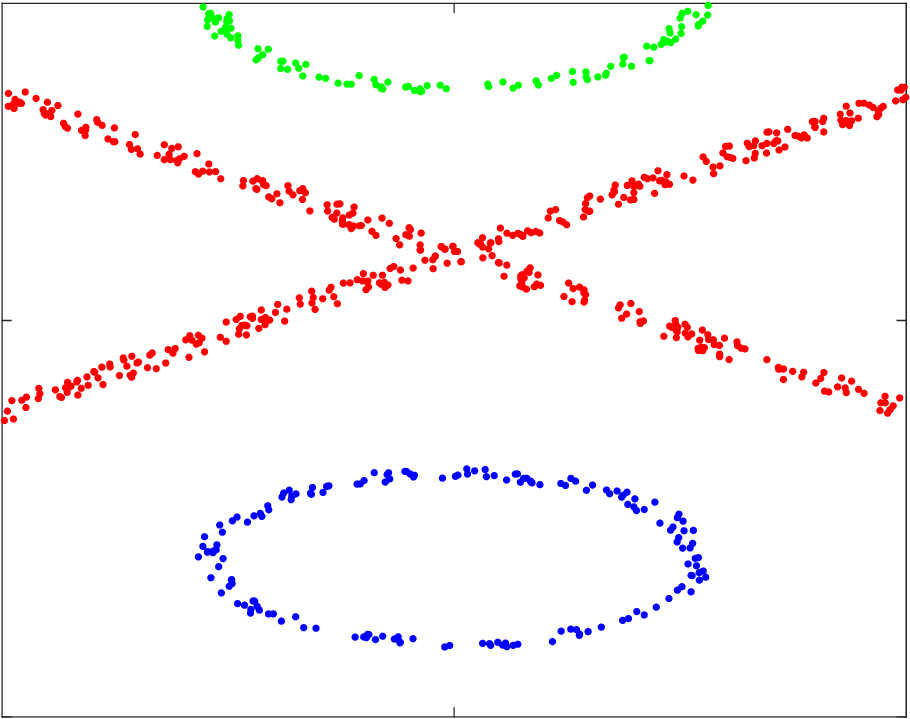}}\\
\subfigure[\tiny Dataset 4: two blocks in a circle]{\includegraphics[height=25mm]{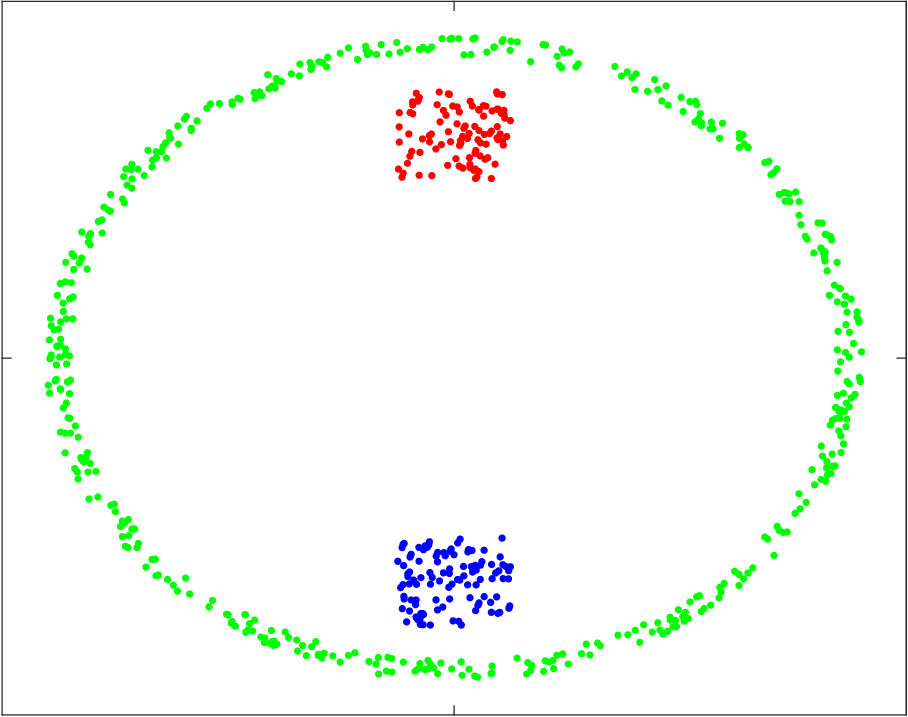}}\quad\quad\quad
\subfigure[\tiny Dataset 5: two moons]{\includegraphics[height=25mm]{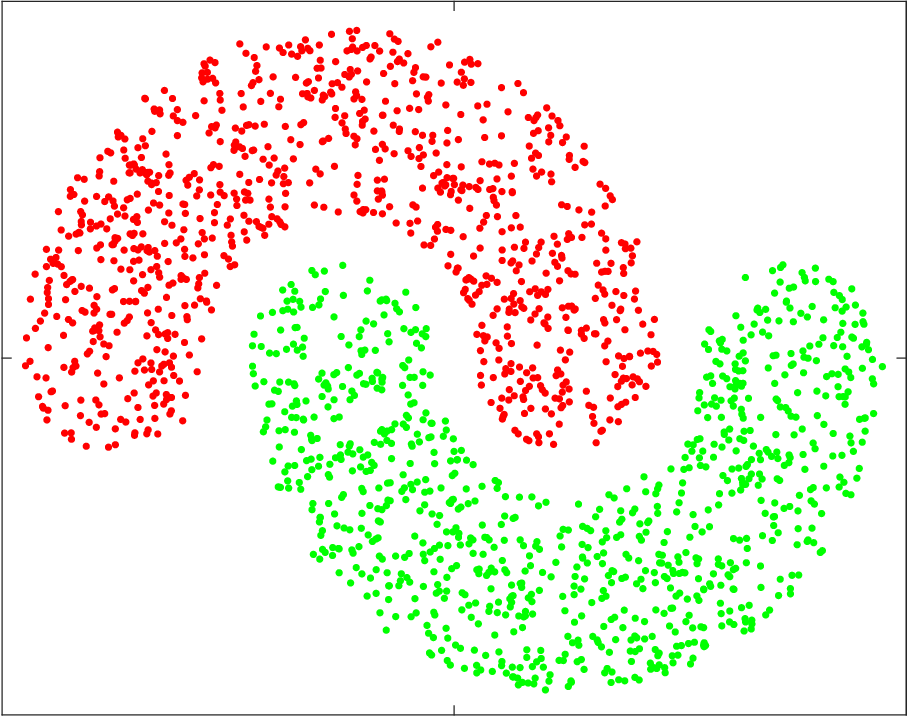}}\quad\quad\quad
\subfigure[\tiny Dataset 6: four blocks with noises]{\includegraphics[height=25mm]{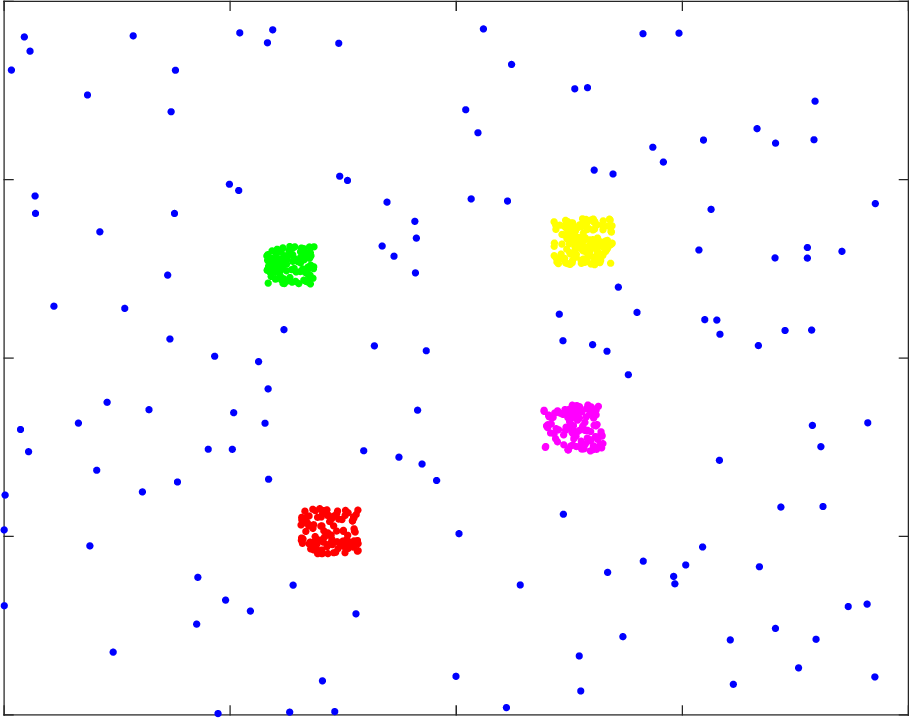}}
\caption{Synthetic graph datasets}\label{pic:8graph}
\end{figure}

\subsubsection{Choice of the parameter $\beta$}\label{sec:beta}

In this subsection, we test COSSC under different choices of~$\beta$ ranging in 
$\{1/10n, 1/n, (d-1)/n, 0.1\}$, while ensuring $d\geq k^*$. 
These four choices for $\beta$ are selected to balance the two terms in \eqref{eq:modelORIGIN}. Specifically, $1/10n$ and $1/n$ provide a scale based on the number of data points, $(d-1)/n$ takes into account the predefined $d$, and $0.1$ offers a fixed choice.
Then, we investigate how these choices of~$\beta$ affect ACC with varying input $d$. In this task, ACC=1 means that all the clusters are correctly separated.  The results are shown in Figure~\ref{fig:beta}, where several lines overlap at ACC$=1$, making it difficult to visually distinguish them in the graph.
The $x$-axis of each figure represents the value of $d$, and the $y$-axis represents the ACC. 
From these results, we observe that $\beta=(d-1)/n$ {is the best choice among all.  
Therefore, we take $\beta=(d-1)/n$ as the default setting in the following numerical experiments.}

\begin{figure}[htbp!]
\centering
\subfigure[Dataset 1]{\includegraphics[height=33mm]{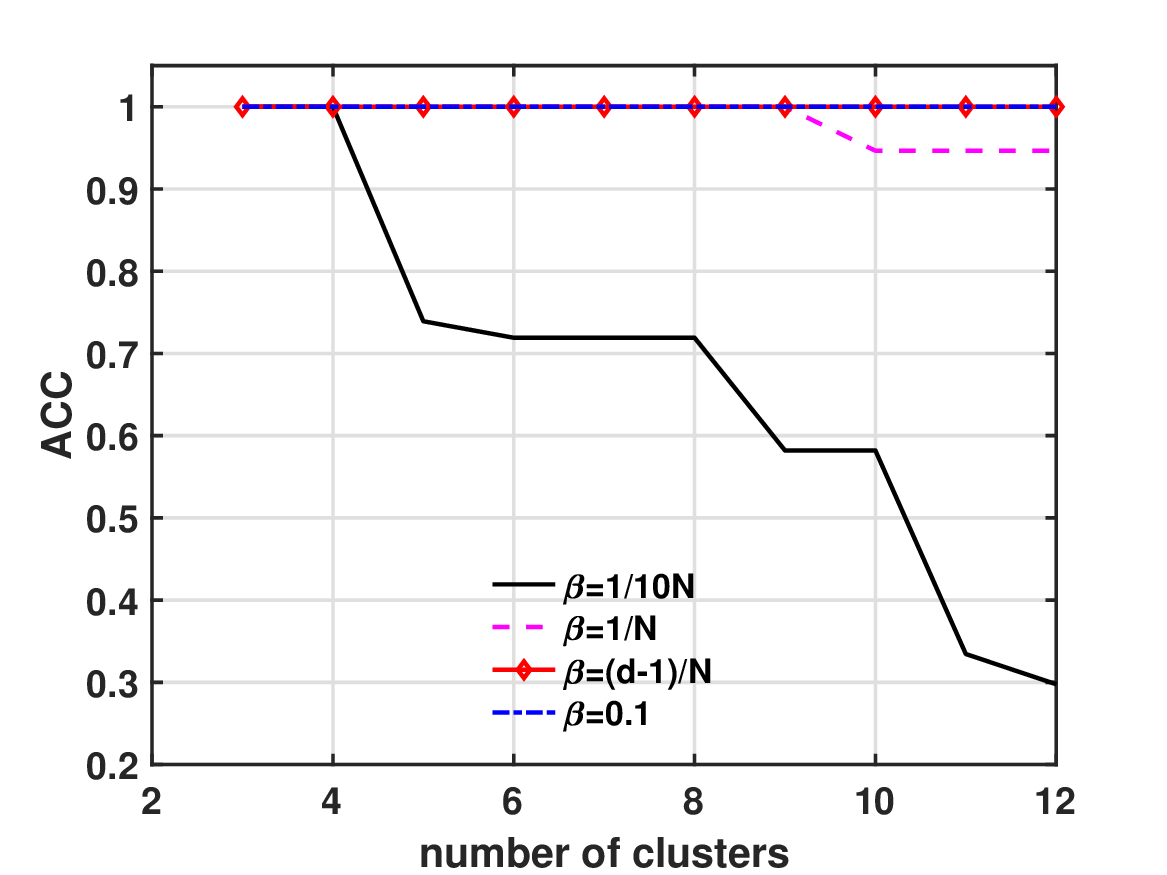}}\quad
\subfigure[Dataset 2]{\includegraphics[height=33mm]{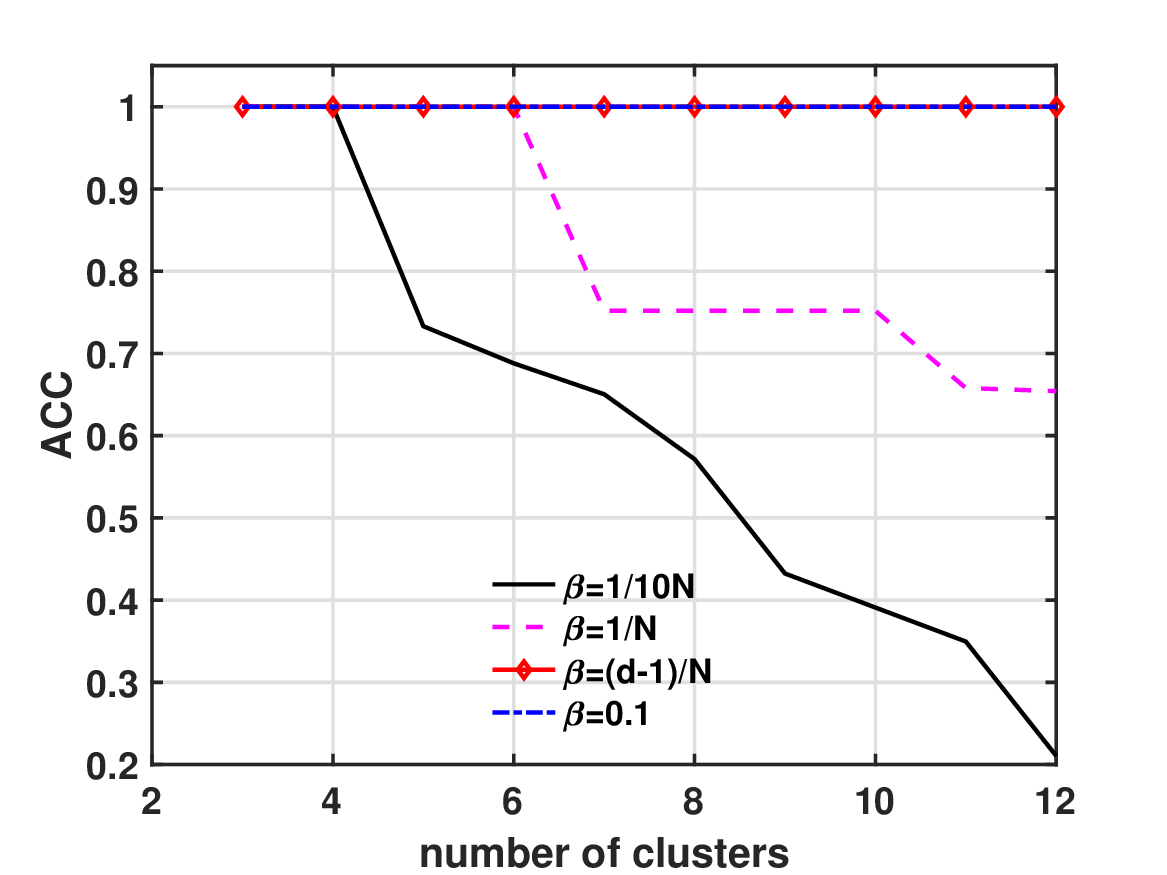}}\quad
\subfigure[Dataset 3]{\includegraphics[height=33mm]{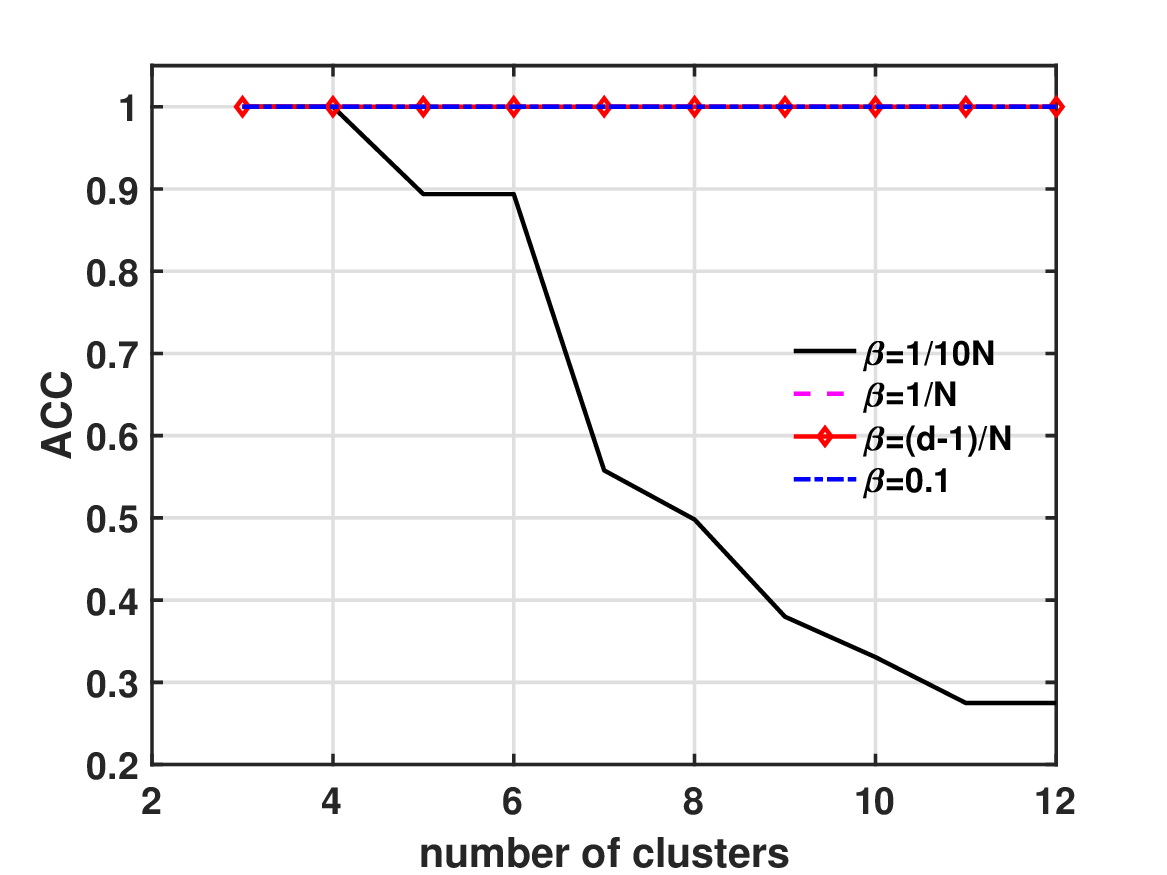}}\\
\subfigure[Dataset 4]{\includegraphics[height=33mm]{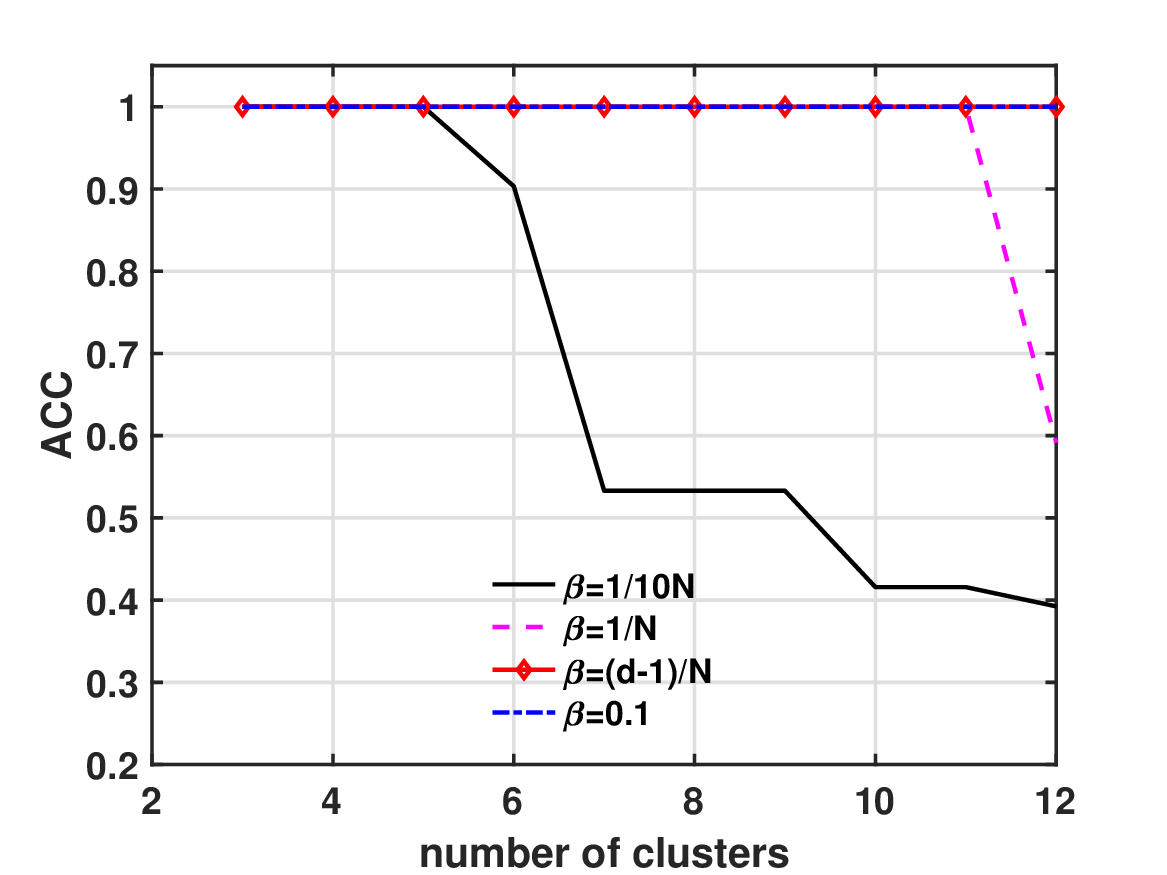}}\quad
\subfigure[Dataset 5]{\includegraphics[height=33mm]{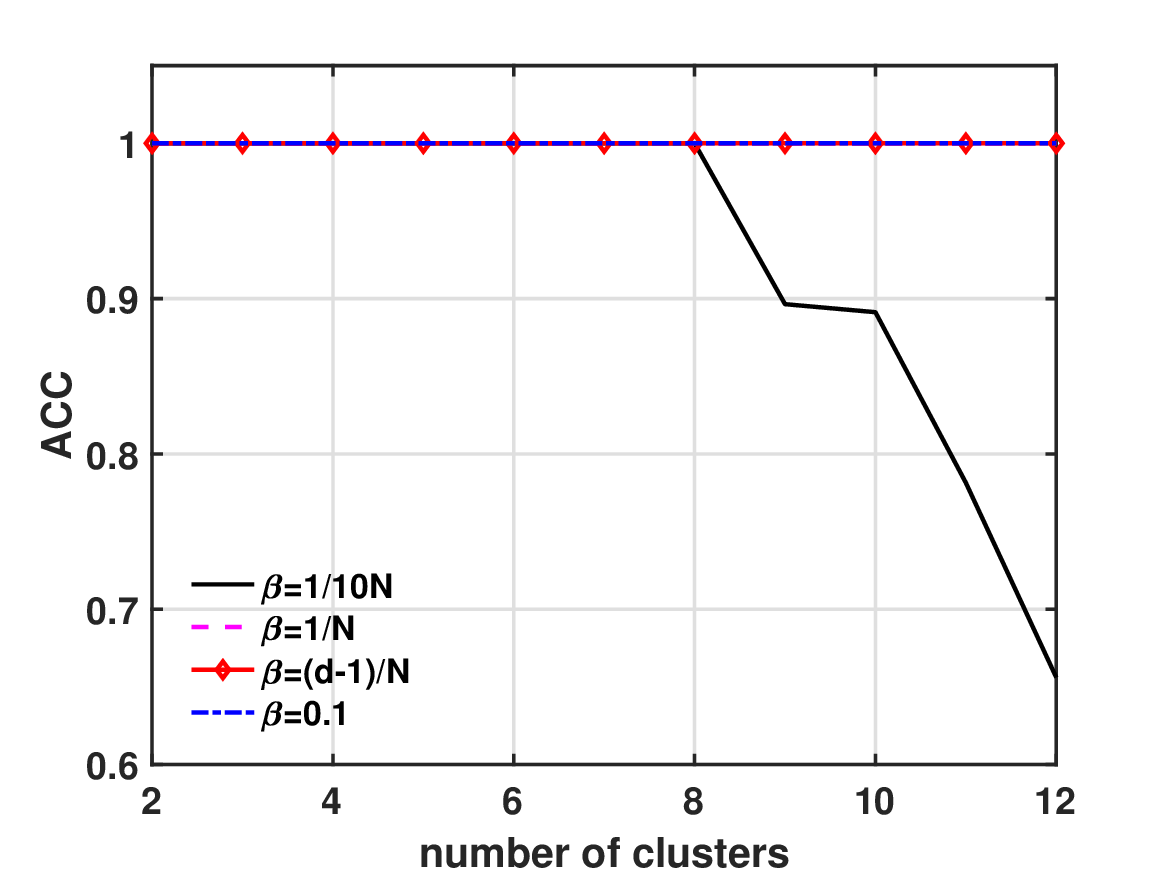}}\quad
\subfigure[Dataset 6]{\includegraphics[height=33mm]{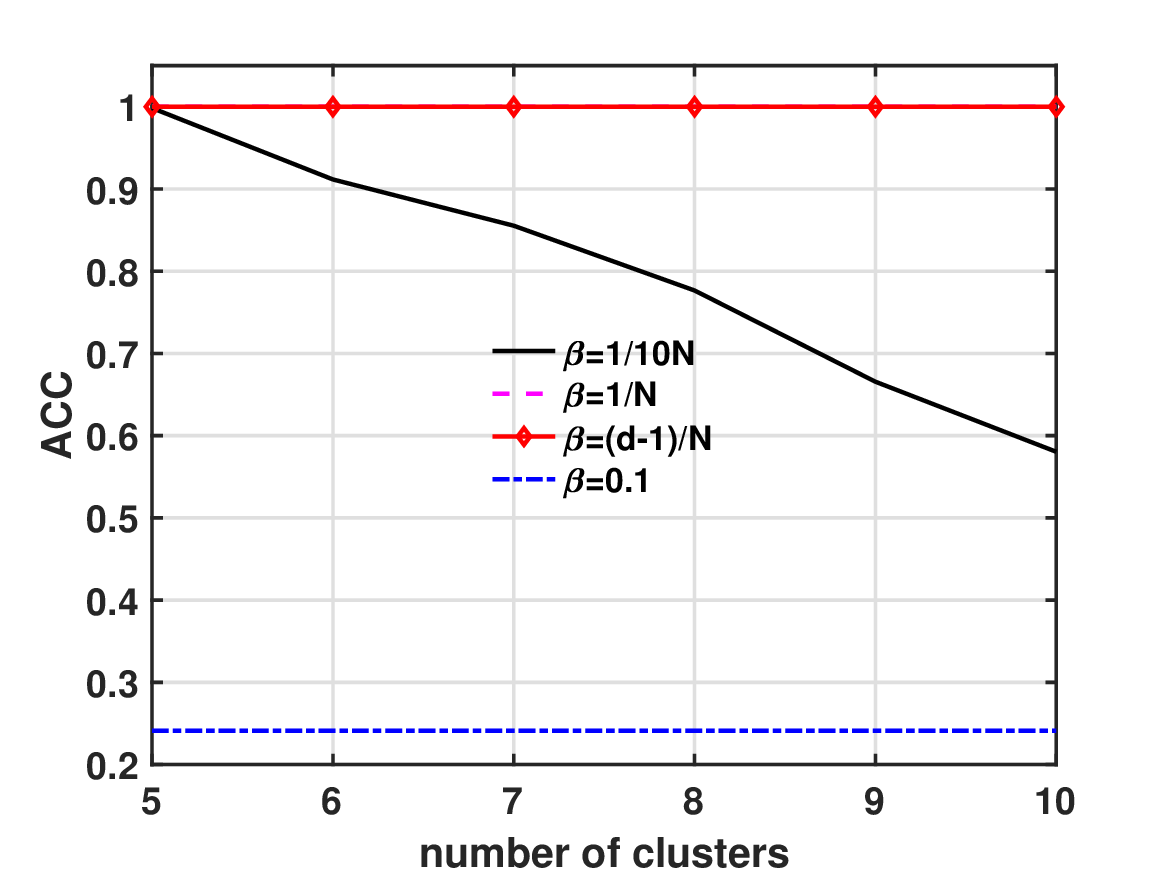}}\\
\caption{{A} comparison of ACC of COSSC with different $\beta$ and $d\geq k^*$, where several lines overlap at ACC$=1$.}\label{fig:beta}
\end{figure}

\subsubsection{\revise{Must-link constraints and choice of the parameter $p$}}

{{In this subsection, we illustrate how the parameters $p$ and $\beta$ affect the output clusters, in terms of satisfying the must-link constraints.}

First, we study how $p$ affects the performance of COSSC with the default $\beta={(d-1)}/{n}$ as suggested by the experiments in Section \ref{sec:beta}.
{We choose the dataset represented by Figure~\ref{pic:8graph}(f), impose several must-link constraints on certain edges, {and color them black.} The resulting graph is shown in  
Figure~\ref{pic:p}(a).}
Note that some must-link constraints are inconsistent with the natural (ideal) clusters. 
Under these constraints, we compare the performance of COSSC with $d=5$ and $p\in\{1,2,5\}$, and show the corresponding output clusters   in Figure~\ref{pic:p}(b)--(d).
From the results, we observe that no must-link constraint is satisfied {when} $p=1$,
one must-link constraint is satisfied {when} $p=2$, and all the must-link constraints are met {when $p= 5$.  
{This indicates that increasing $p$ helps satisfy the must-link constraints,}
which is guaranteed by Theorem~\ref{thm:mustlink2}.}  Based on these results, we take $p=10$ as our default setting.

{{Next, for the six datasets in Figure~\ref{pic:8graph}, we visualize the relationship between parameters $p, \beta$ and the RMV by comprehensive numerical experiments.}
Specifically, we randomly select $25\%$ of edges from these datasets\footnote{{We use the MATLAB function ``randperm''  to generate a random perturbation of the edge set $\{(1,2), (1,3), \ldots, (1,n), \ldots, (n-1,n)\}$. We then choose the top 25$\%$ of the  perturbation.}},  
put must-link constraints on them, and set $d=3,3,3,3,2,5$ for COSSC corresponding to these datasets, respectively. We choose $\beta$ from $\{0.001,0.002,0.003,\ldots,0.1\}$ and $p$ from $\{1.1,1.2,1.3,\ldots,10\}$, leading to 9000 distinct combinations  of $\beta$ and $p$. We apply COSSC with such combinations to these datasets, and present the numerical results in Figure~\ref{pic:p23}. 
In this figure, the shaded areas with different colors represent the values of $(\beta,p)$ for which the output clusters generated by COSSC meet a certain percentage of must-link constraints.  
Notably, the must-link constraints are satisfied across a wide range of $(\beta,p)$ values, beyond the one specified in Theorem~\ref{thm:mustlink2}.}

\begin{figure}[htbp]
\centering
\subfigure[Graph with three must-link constraints]{\includegraphics[height=27mm]{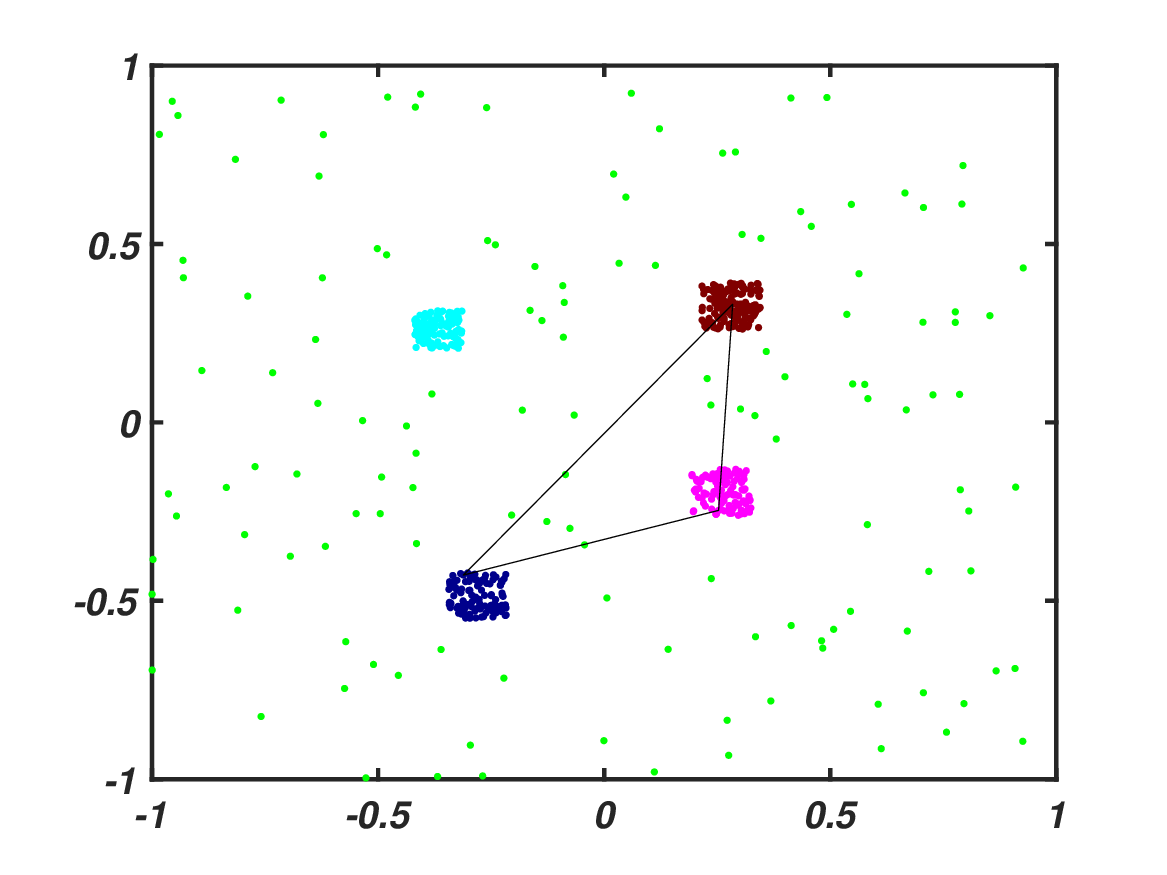}}
\subfigure[ $p=1$]{\includegraphics[height=27mm]{fig/gound6}}
\subfigure[ $p=2$]{\includegraphics[height=27mm]{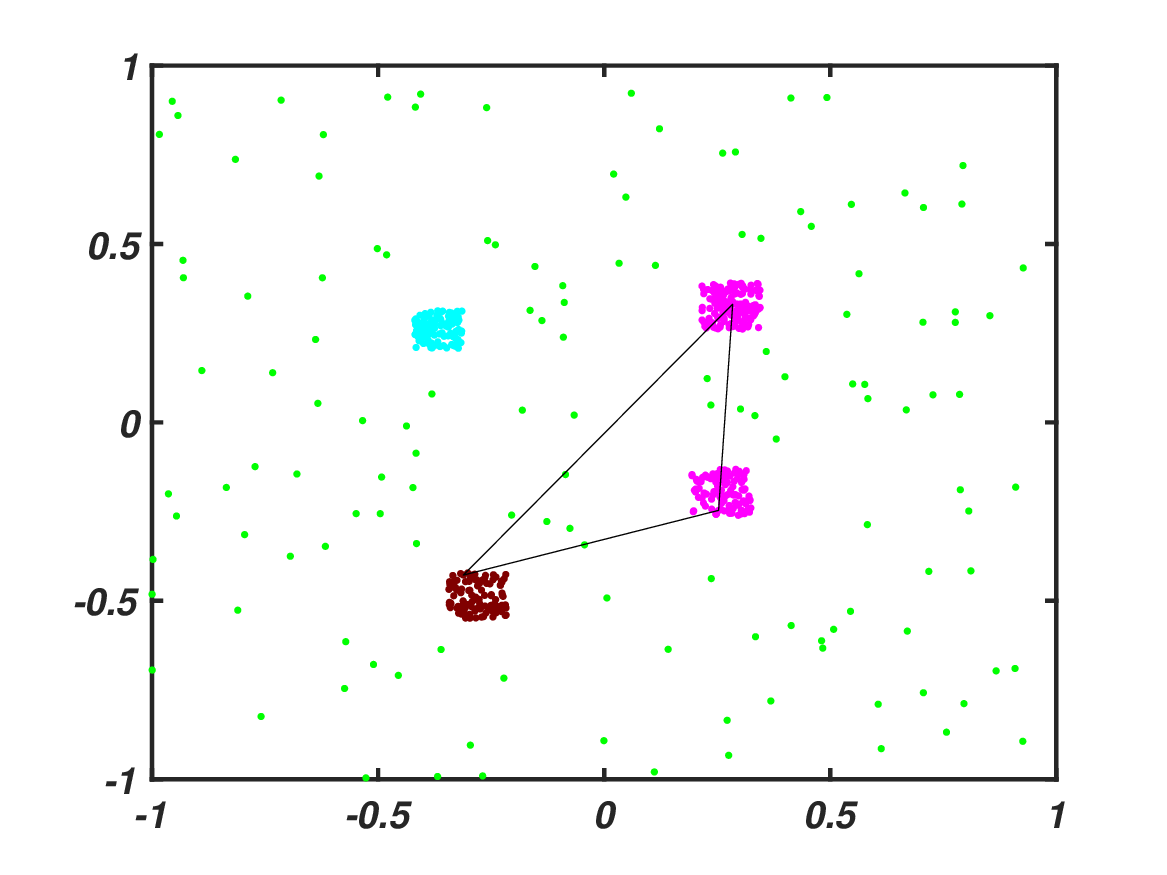}}
\subfigure[ $p=5$]{\includegraphics[height=27mm]{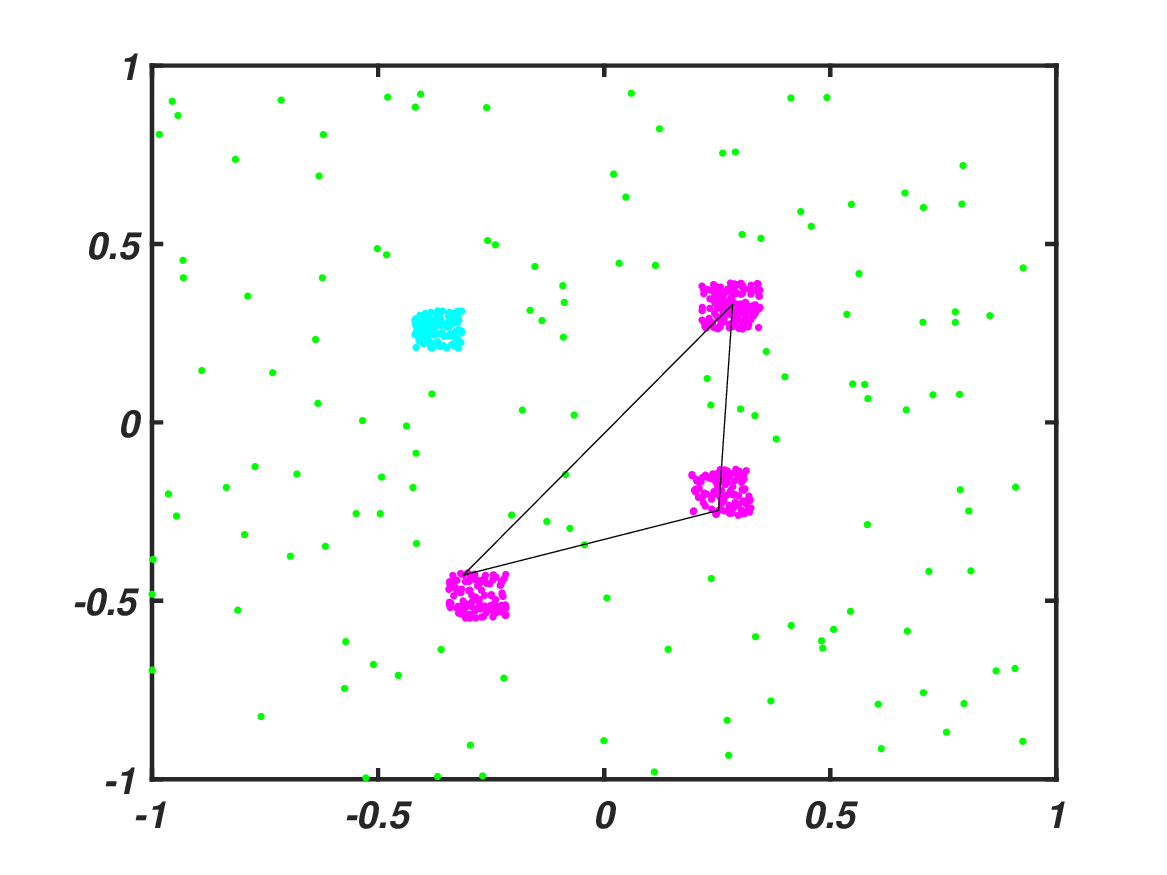}}
\caption{{A} comparison of the output clusters of COSSC with different $p$.}\label{pic:p}
\end{figure}

\begin{figure}[htbp]
\centering
\subfigure[ Dataset 1]{\includegraphics[height=33mm]{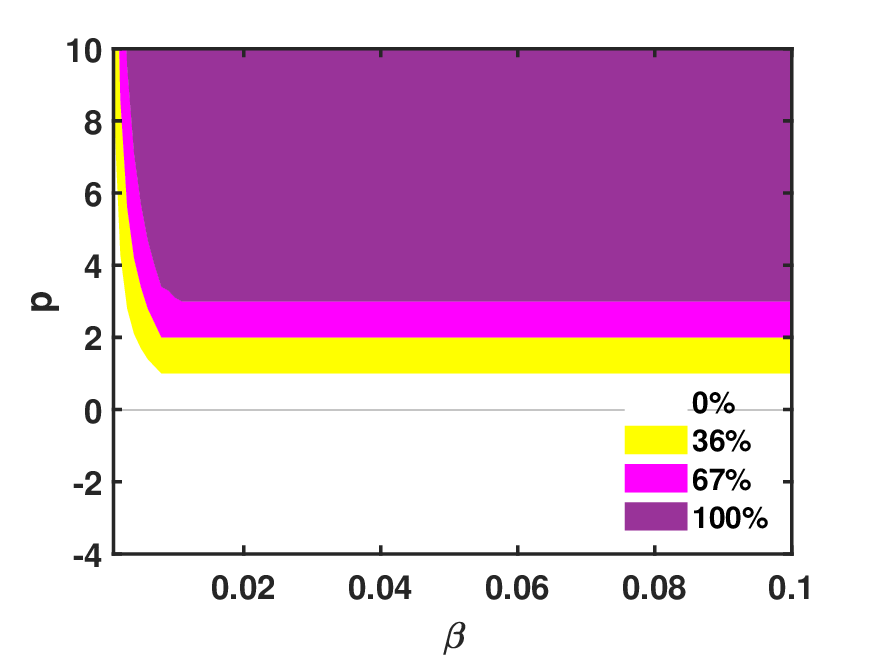}}\quad
\subfigure[  Dataset 2]{\includegraphics[height=33mm]{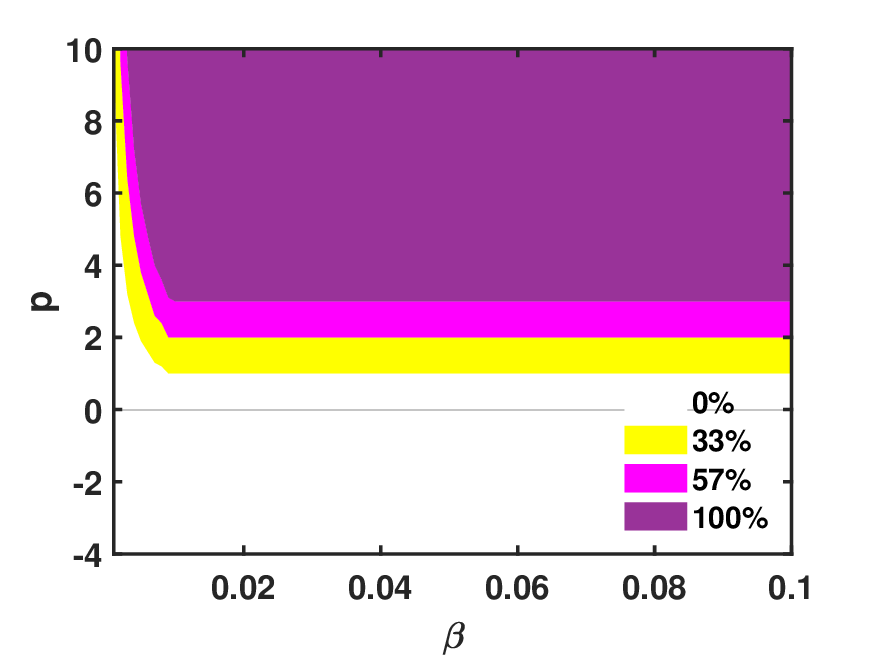}}\quad
\subfigure[ Dataset 3]{\includegraphics[height=33mm]{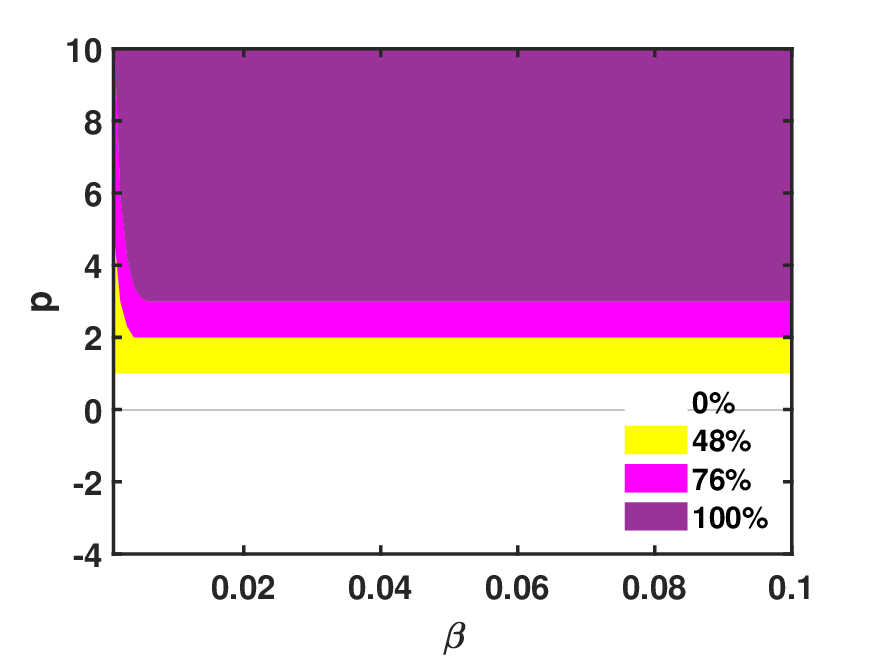}}\\
\subfigure[ Dataset 4]{\includegraphics[height=33mm]{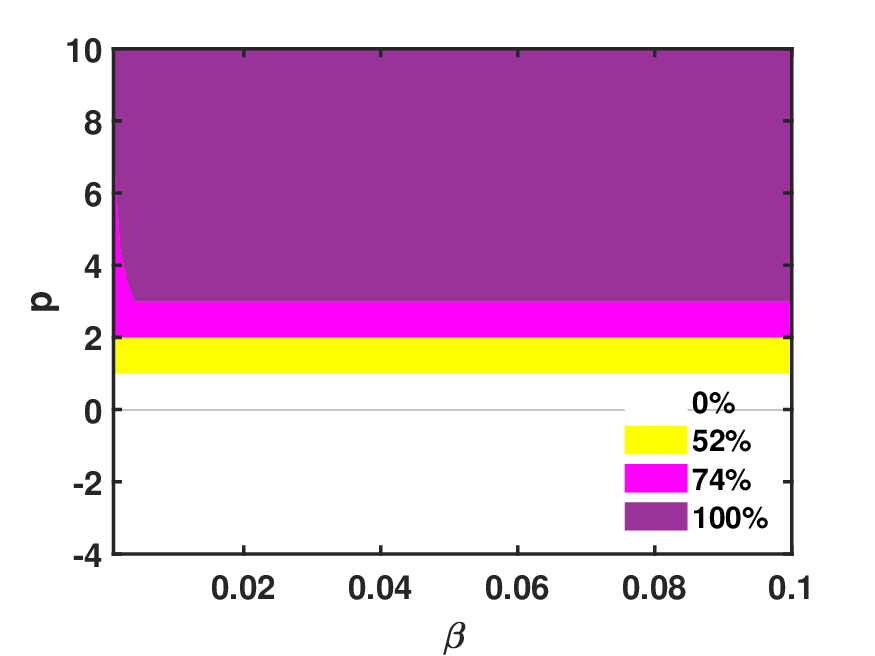}}\quad
\subfigure[  Dataset 5]{\includegraphics[height=33mm]{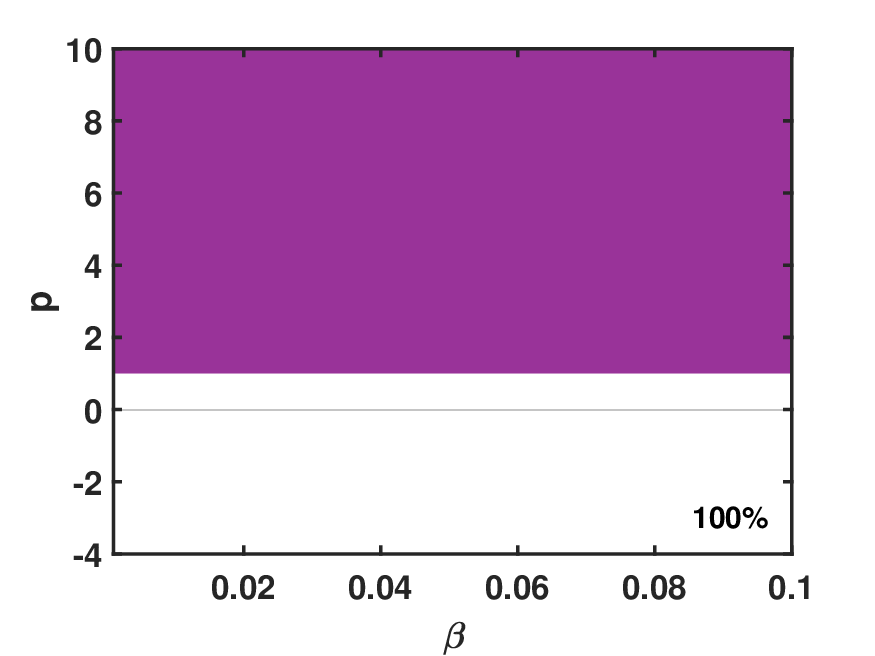}}\quad
\subfigure[ Dataset 6]{\includegraphics[height=33mm]{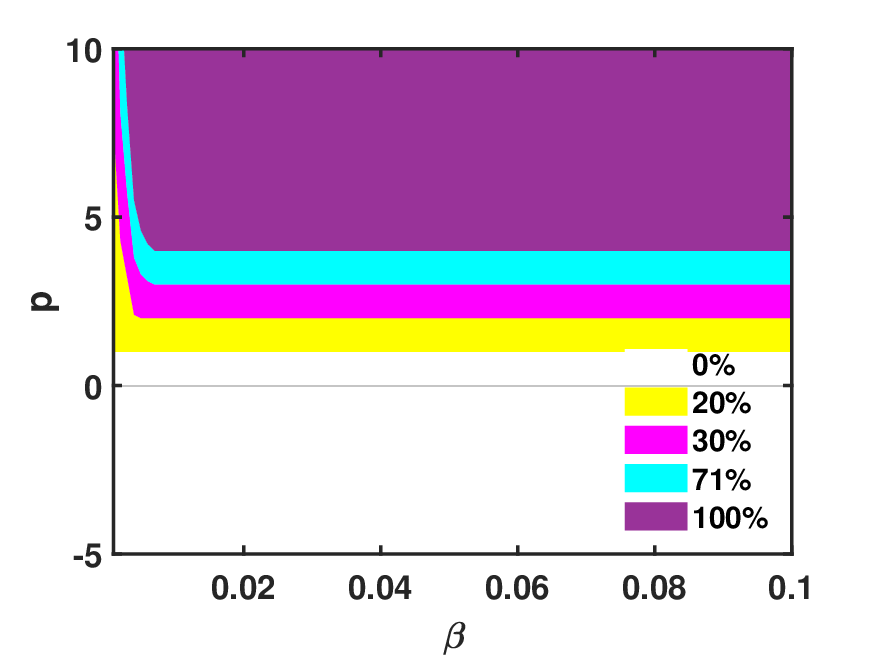}}
\caption{Ratio of the must-link {constraints} satisfied by output clusters on Figure \ref{pic:8graph} with different $\beta$ and $p$.}\label{pic:p23}
\end{figure}

\subsubsection{Numerical performance with different input number of clusters}\label{sec:graphdiffn}
{In this subsection, we show} that the numerical performance of COSSC is relatively insensitive to the choice of $d$ as long as it is not too large.  
As a comparison, we also test SCA with different input~$k$.
The numerical results are displayed in Figure~\ref{pic:9acc}. 
In the first two columns of this figure, we show that SCA outputs the ideal clusters when the input number of clusters $k$ equals the ideal number $k^*$ of clusters. However, it fails to produce the ideal clusters when $k=k^*+1$.  
{In contrast, as shown in the third column of Figure~\ref{pic:9acc}, COSSC consistently outputs the ideal clusters across a wide range of $d$ values. 
}

\begin{figure}[htbp]
\centering
\subfigure[\tiny Dataset 1, SCA, $k=k^*$=3]{\includegraphics[height=30mm]{fig/graph1_sc_right}}\quad\quad\quad\quad
\subfigure[\tiny Dataset 1, SCA, $k$=4]{\includegraphics[height=30mm]{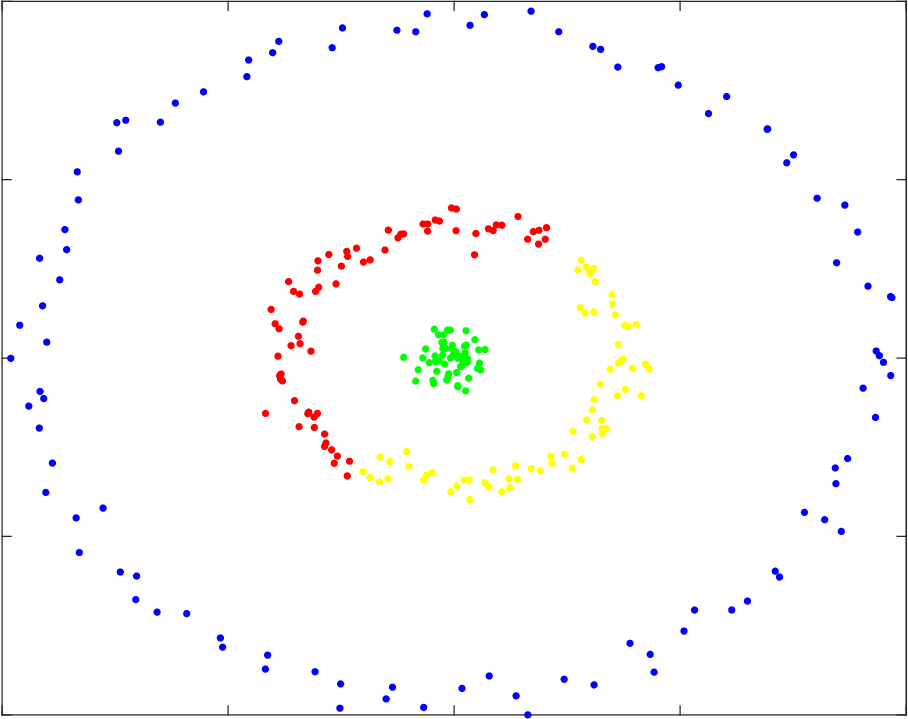}}\quad\quad\quad\quad
\subfigure[\tiny Dataset 1, COSSC, $d$=3,$\ldots$,12]{\includegraphics[height=30mm]{fig/graph1_sc_right}}\\
\vspace{-3mm}
\subfigure[\tiny Dataset 2, SCA, $k=k^*$=3]{\includegraphics[height=30mm]{fig/graph2_sc_right}}\quad\quad\quad\quad
\subfigure[\tiny Dataset 2, SCA, $k$=4]{\includegraphics[height=30mm]{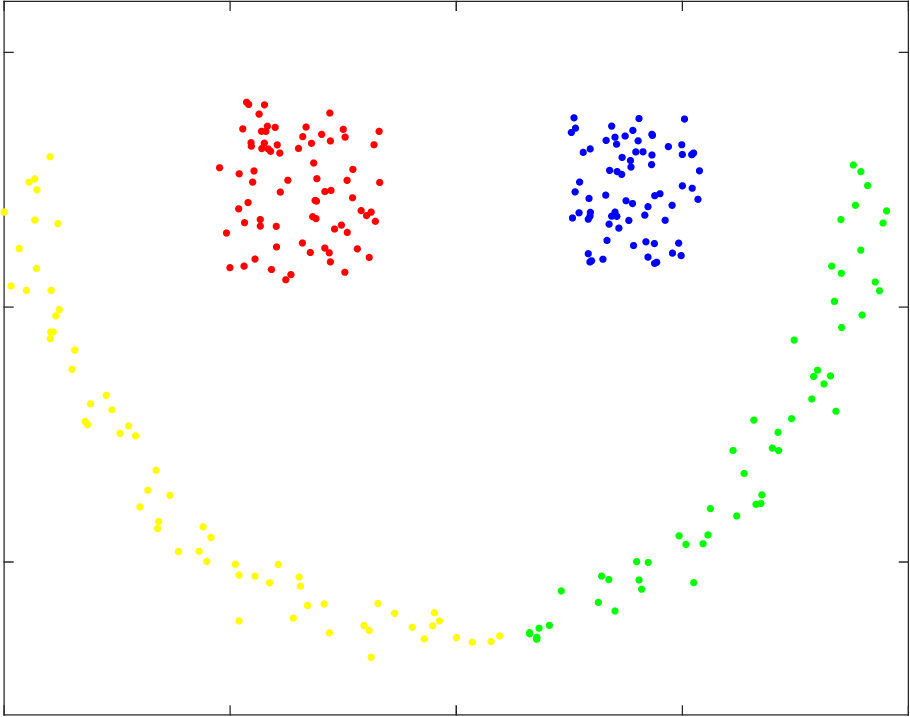}}\quad\quad\quad\quad
\subfigure[\tiny Dataset 2, COSSC, $d$=3,$\ldots$,12]{\includegraphics[height=30mm]{fig/graph2_sc_right}}\\
\vspace{-3mm}
\subfigure[\tiny Dataset 3, SCA, $k=k^*$=3]{\includegraphics[height=30mm]{fig/graph4_sc_right}}\quad\quad\quad\quad
\subfigure[\tiny Dataset 3, SCA, $k$=4]{\includegraphics[height=30mm]{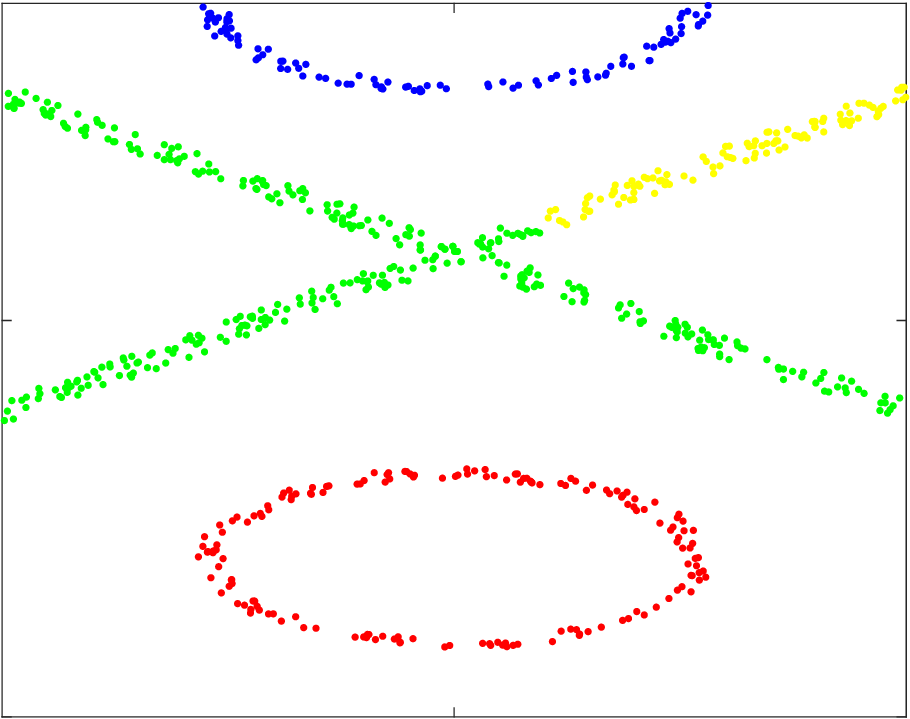}}\quad\quad\quad\quad
\subfigure[\tiny Dataset 3, COSSC, $d$=3,$\ldots$,12]{\includegraphics[height=30mm]{fig/graph4_sc_right}}\\
\vspace{-3mm}
\subfigure[\tiny Dataset 4, SCA, $k=k^*$=3]{\includegraphics[height=30mm]{fig/graph5_sc_right}}\quad\quad\quad\quad
\subfigure[\tiny Dataset 4, SCA, $k$=4]{\includegraphics[height=30mm]{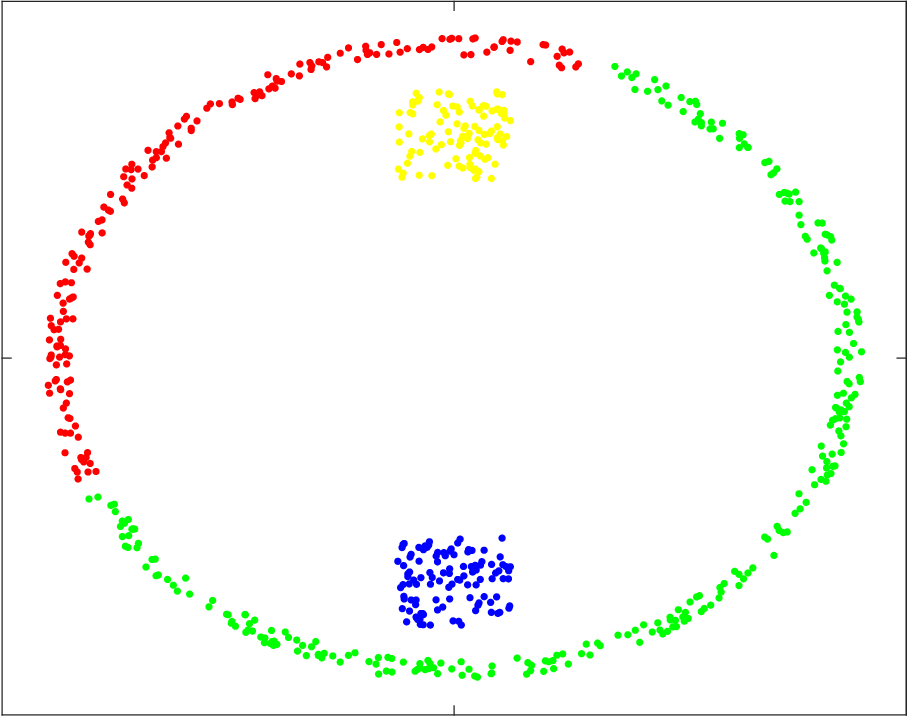}}\quad\quad\quad\quad
\subfigure[\tiny Dataset 4, COSSC, $d$=3,$\ldots$,12]{\includegraphics[height=30mm]{fig/graph5_sc_right}}\\
\vspace{-3mm}
\subfigure[\tiny Dataset 5, SCA, $k=k^*$=2]{\includegraphics[height=30mm]{fig/graph6_sc_right}}\quad\quad\quad\quad
\subfigure[\tiny Dataset 5, SCA, $k$=3]{\includegraphics[height=30mm]{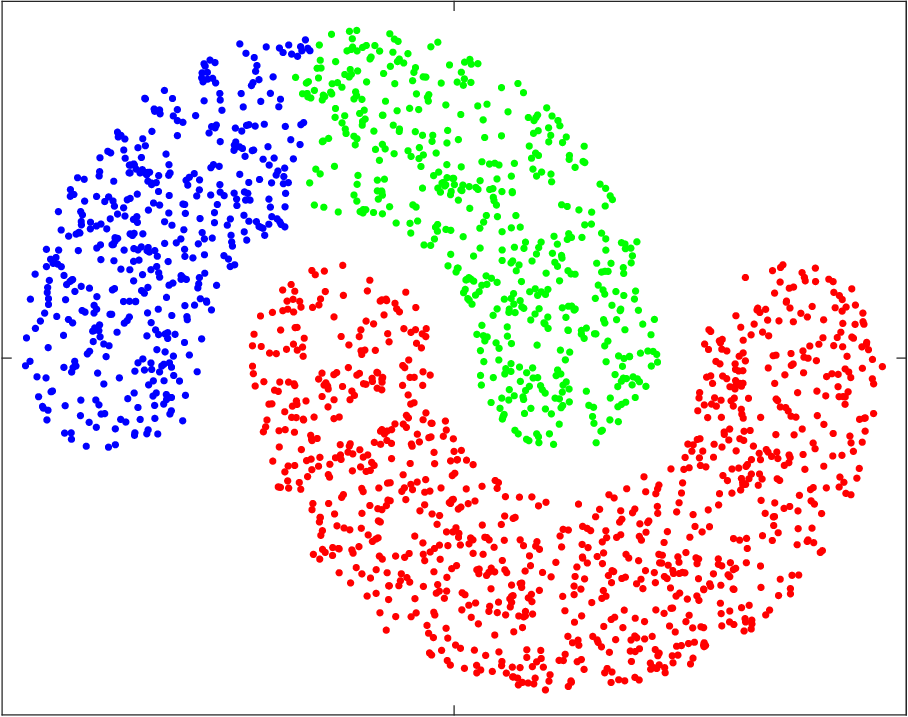}}\quad\quad\quad\quad
\subfigure[\tiny Dataset 5, COSSC, $d$=2,$\ldots$,12]{\includegraphics[height=30mm]{fig/graph6_sc_right}}\\
\vspace{-3mm}
\subfigure[\tiny Dataset 6, SCA, $k=k^*$=5]{\includegraphics[height=30mm]{fig/graph8_sc_right}}\quad\quad\quad\quad
\subfigure[\tiny Dataset 6, SCA, $k$=6]{\includegraphics[height=30mm]{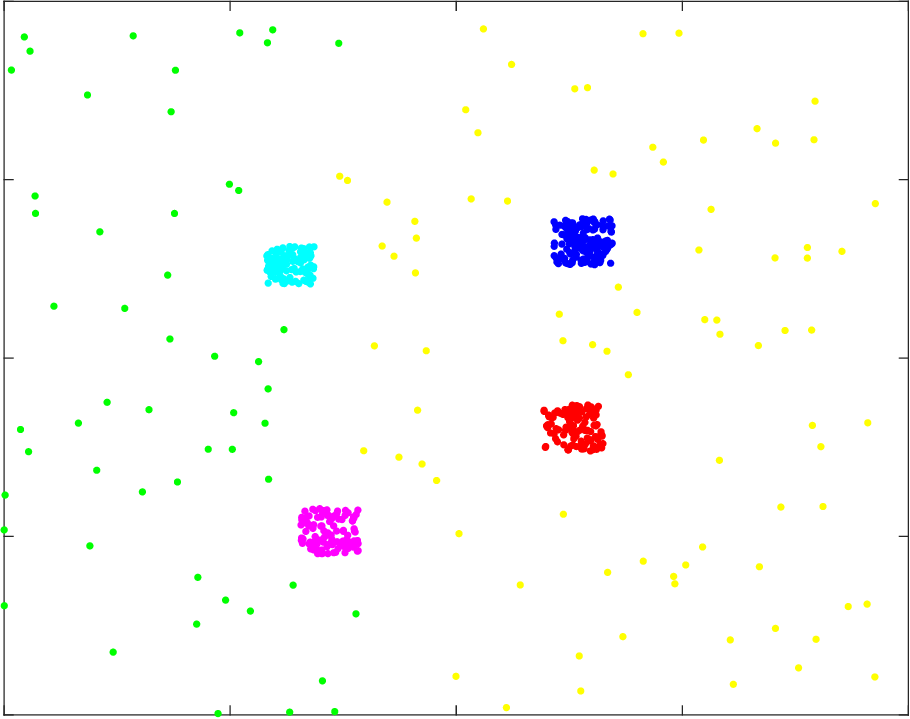}}\quad\quad\quad\quad
\subfigure[\tiny Dataset 6, COSSC, $d$=5,$\ldots$,10]{\includegraphics[height=30mm]{fig/graph8_sc_right}}\\
\vspace{-3mm}
\caption{Comparisons between COSSC and SCA with different input numbers of clusters.}\label{pic:9acc}
\end{figure}

\subsection{Numerical comparisons on document clustering}
\label{sec:docu}

{Now, we }{compare} COSSC with some other methods in solving semi-supervised clustering problems using the TDT2 test set~\cite{cai2005document}, which consists of 10212 news articles from various sources (e.g., NYT, CNN, and VOA) in 1998. There are 450 datasets in the test set. 
For each $k^*\in\{2,3,\ldots,10\}$, there are~50 datasets, each expected to have $k^*$ clusters specified by the test set. These clusters serve as the ground truth or ideal clusters in our tests. We apply each clustering algorithm  to these datasets and evaluate their performance by calculating the average values of  ACC and NMI. For COSSC, we set $\beta={(d-1)}/{n}$ and $p=10$  as suggested in Section \ref{sec:default}. 

In a document clustering task, we generate the must-link constraints in the following way. For each dataset, let $\{P_1,P_2,\ldots,P_{k^*}\}$ be the ideal clusters. 
We randomly select $s\%$ of edges from $\{(i,j)\mid x_i,x_j\in P_l \text{ for some }l\in \{1,2,\ldots, k^*\}\}$, and add must-link constraints on them.
We {refer} to 
$s\%$ as the percentage of  must-link constraints.

\subsubsection{Numerical performances with different parameter $d$}\label{sec:docudiffn}
In this subsection, we compare COSSC with other methods. We set \( s = 5 \) and consider the ideal number of clusters \( k^* \in \{2, 3, 4\} \). For each value of \( k^* \), we test COSSC  with \( d \in \{k^*, \dots, 10\} \), and
for
other algorithms,  we set  \( k=d \) in this comparison; then, each method is applied to the 50 corresponding datasets from TDT2. The average values of ACC and NMI across the 50 datasets are shown in Figure \ref{fig:9beta}, where the horizontal axis represents the input number of clusters. From the results, we observe that (i) as $d$ grows larger than the ideal number of $k^*$,  the numerical performances of COSSC deteriorate at a slower rate compared with other algorithms; (ii)  {COSSC achieves the best performance among all methods in terms of ACC and NMI in most cases.}

\begin{figure}[tbp!]
\centering
\subfigure{\includegraphics[height=35mm]{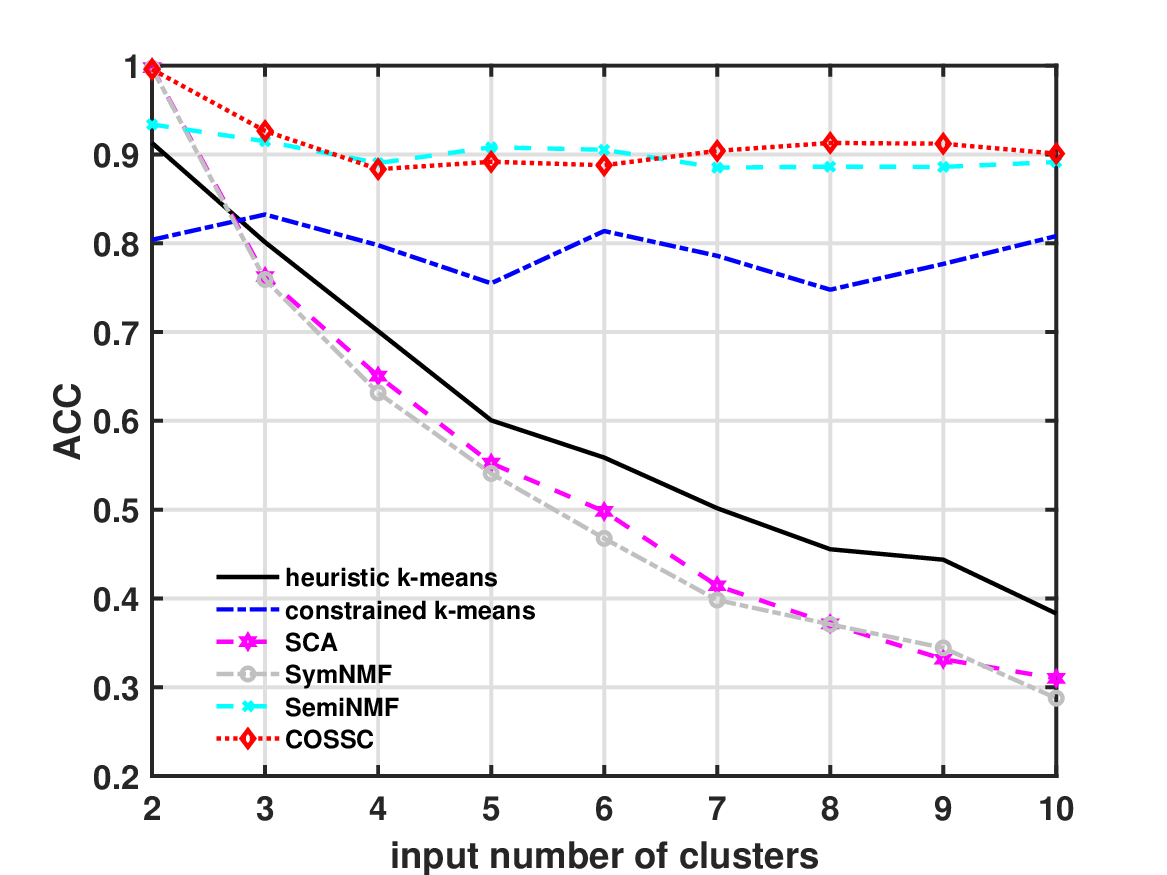}}\quad\quad
\subfigure{\includegraphics[height=35mm]{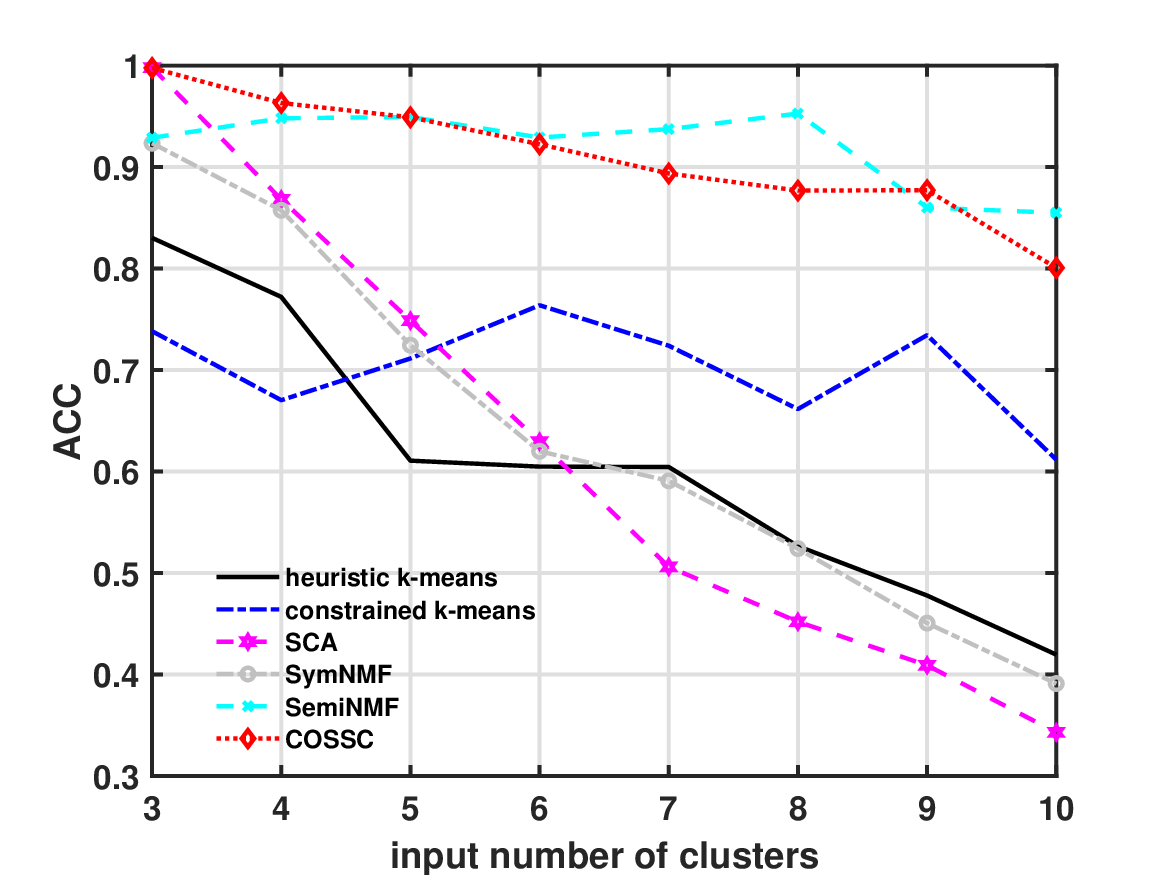}}\quad\quad
\subfigure{\includegraphics[height=35mm]{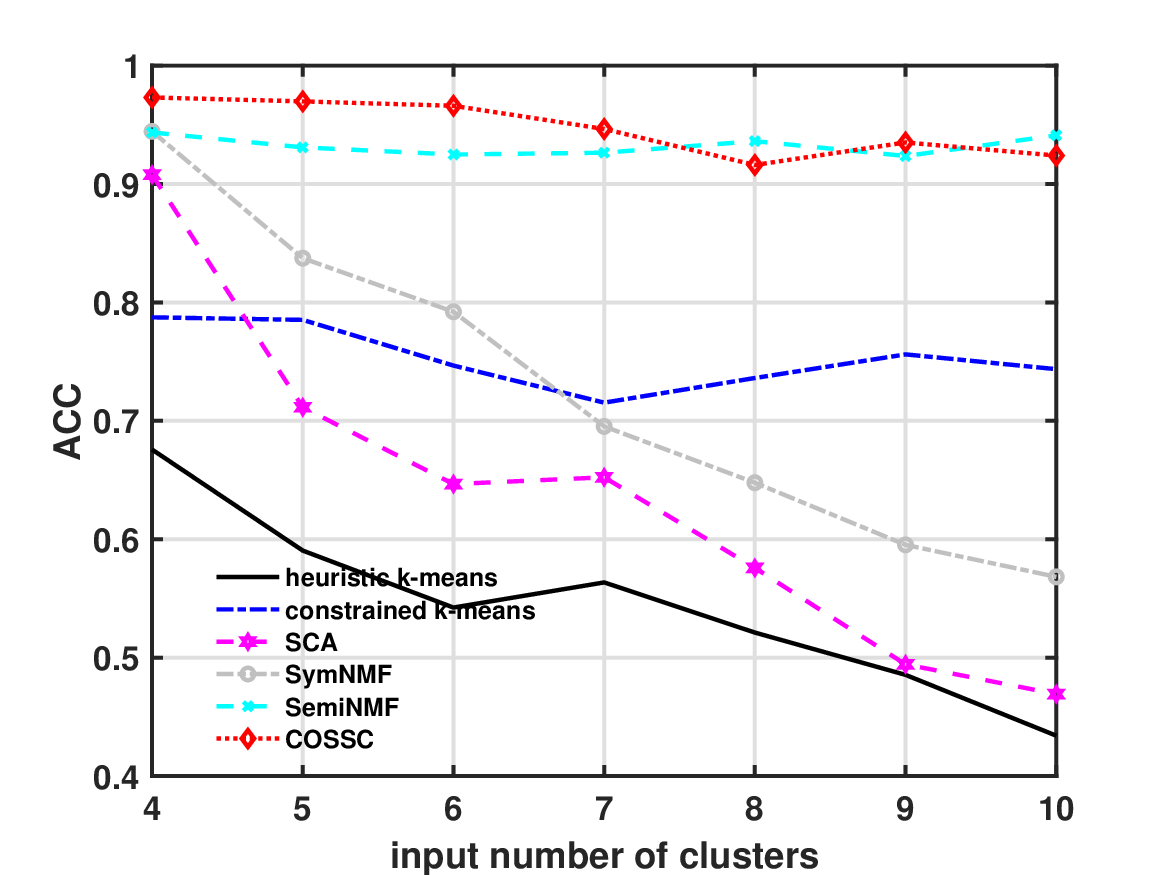}}\\
\setcounter{subfigure}{0}
\vspace{-3mm}
\subfigure[$k^*=$ 2]{\includegraphics[height=35mm]{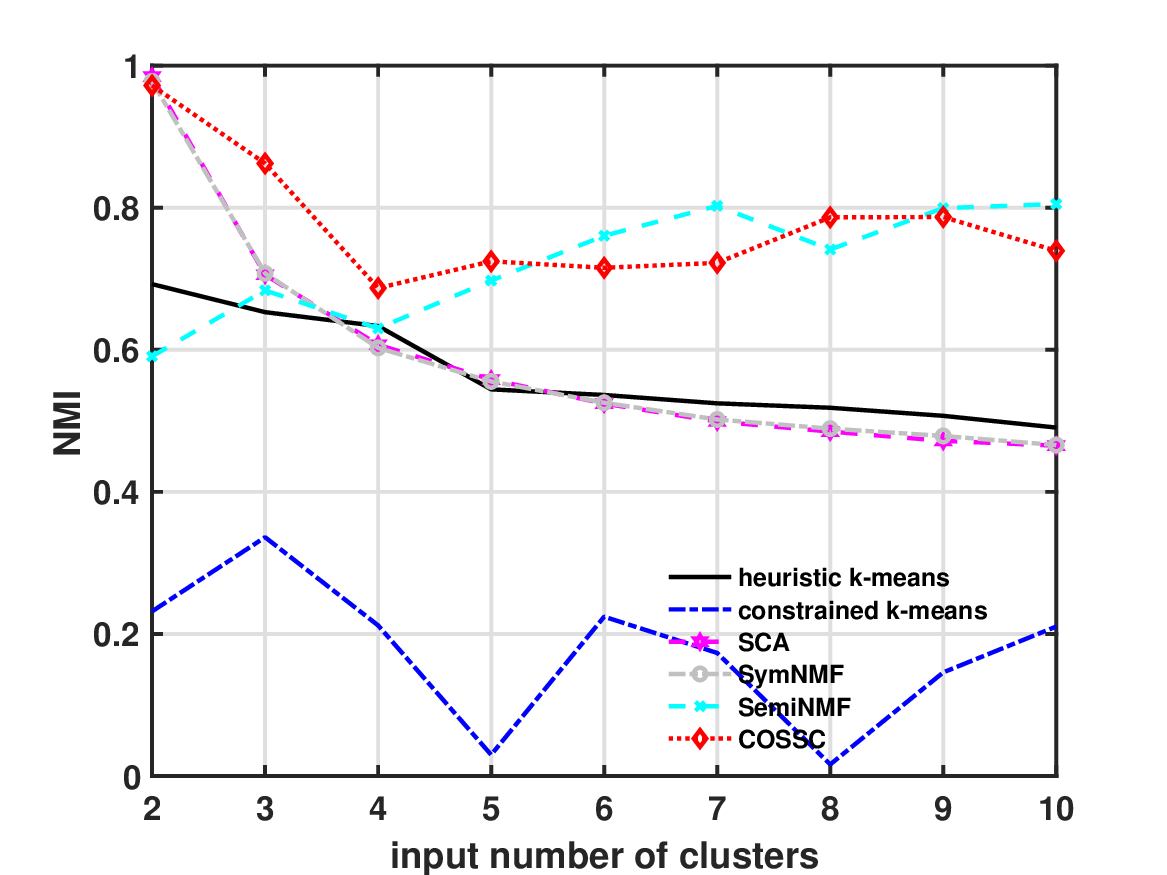}}\quad\quad
\subfigure[$k^*=$ 3]{\includegraphics[height=35mm]{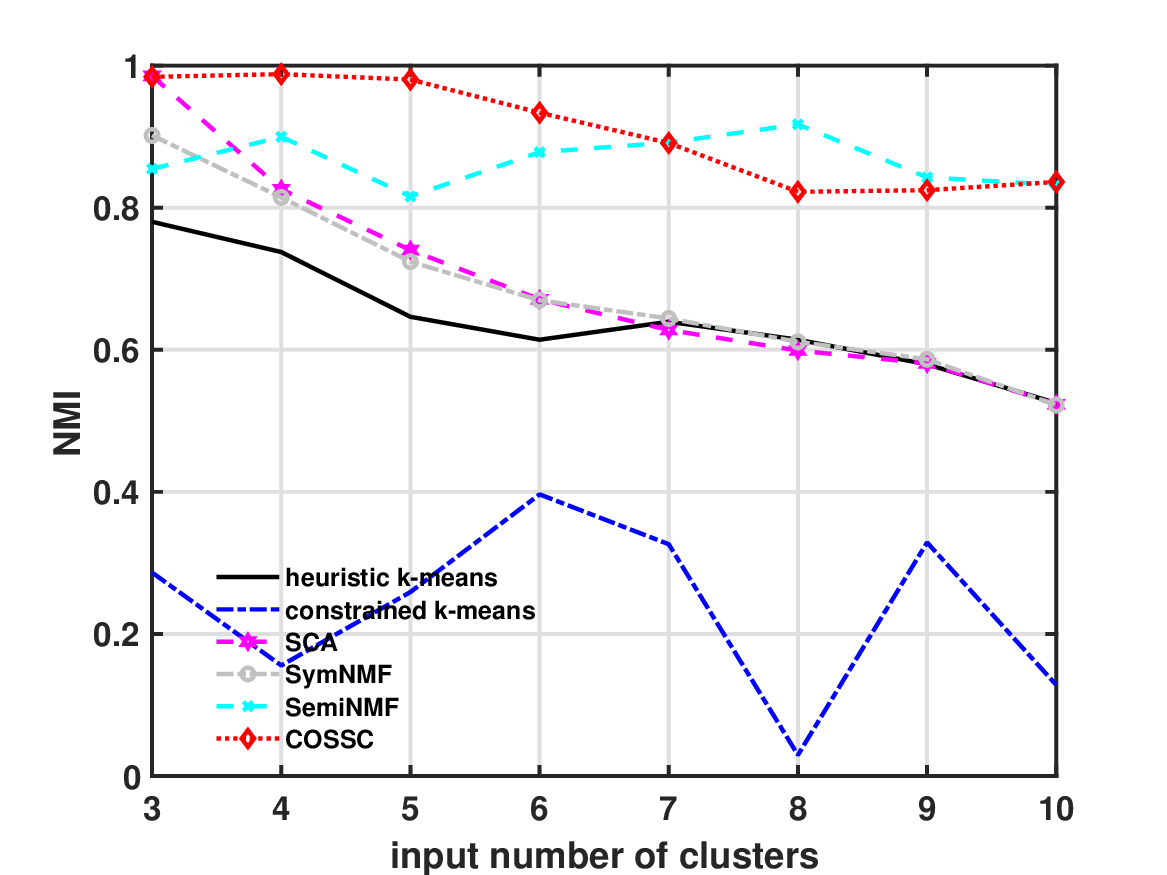}}\quad\quad
\subfigure[$k^*=$ 4]{\includegraphics[height=35mm]{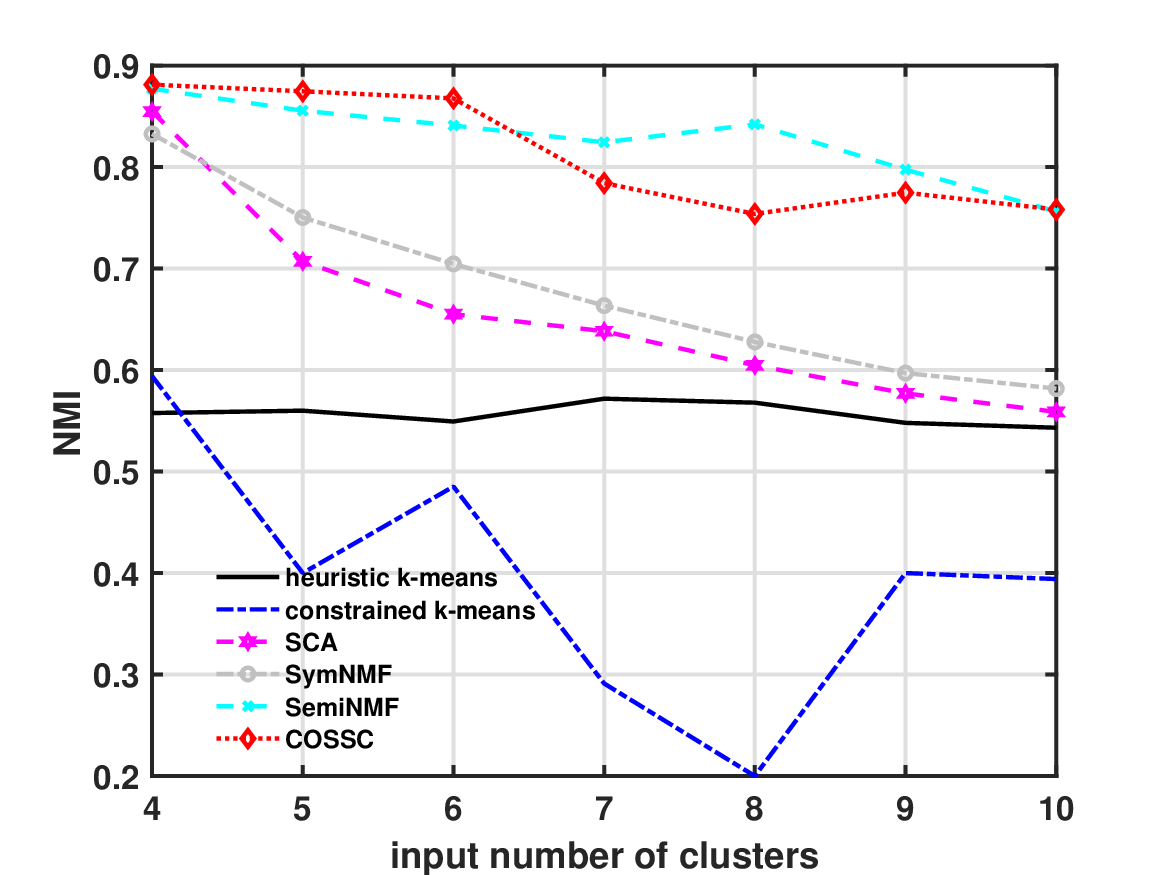}}\quad\\
\caption{Comparisons of ACC and NMI among COSSC and other algorithms with $d=k\in\{k^*,\ldots,10\}$.}\label{fig:9beta}
\end{figure}

\subsubsection{Numerical performances with different amounts of must-link constraints}
\label{sec:442}

Now, we compare COSSC with the  five methods mentioned in Section \ref{sec:default} {across} varying percentages of must-link constraints. Specifically, we set $k^*\in\{2,3,4\}$ and $s\in\{5,10,\ldots,50\}$. For each~$k^*$, we apply each clustering algorithm with $d=k=k^*$ to the 50 corresponding datasets on TDT2, and present the average results of ACC, NMI and RMV in Figure~\ref{fig:9beta2}.   
The first two rows of Figure~\ref{fig:9beta2} show that COSSC outperforms the other methods in terms of both ACC and NMI.  
In the last row, we observe  that COSSC achieves an RMV of zero for all $k^*$ and~$s$. 
The observation indicates that the graph associated with the output of COSSC satisfies the must-link constraints, even though our settings of $\beta$ and $p$ does not necessarily satisfy the assumption in Theorem \ref{thm:mustlink2}.
\begin{figure}[htbp!]
\centering
\subfigure{\includegraphics[height=35mm]{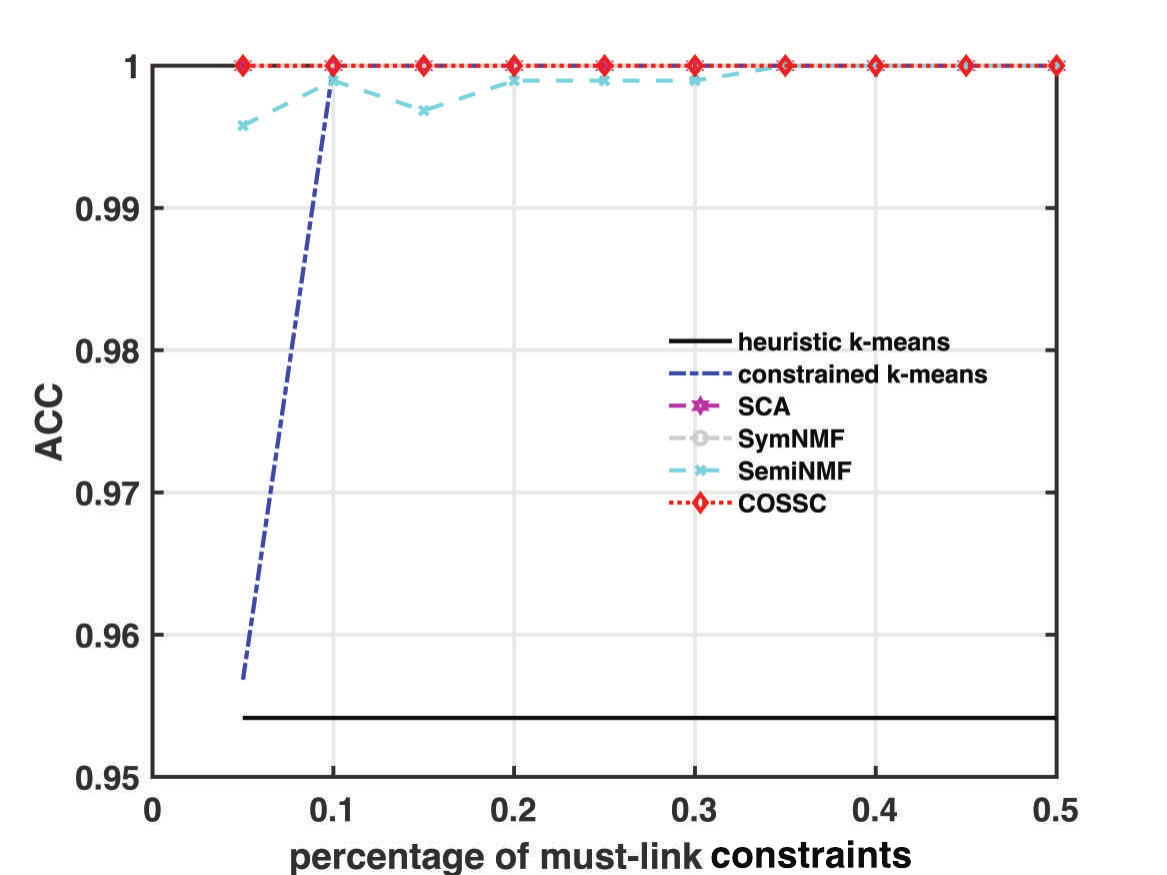}}\quad\quad
\subfigure{\includegraphics[height=35mm]{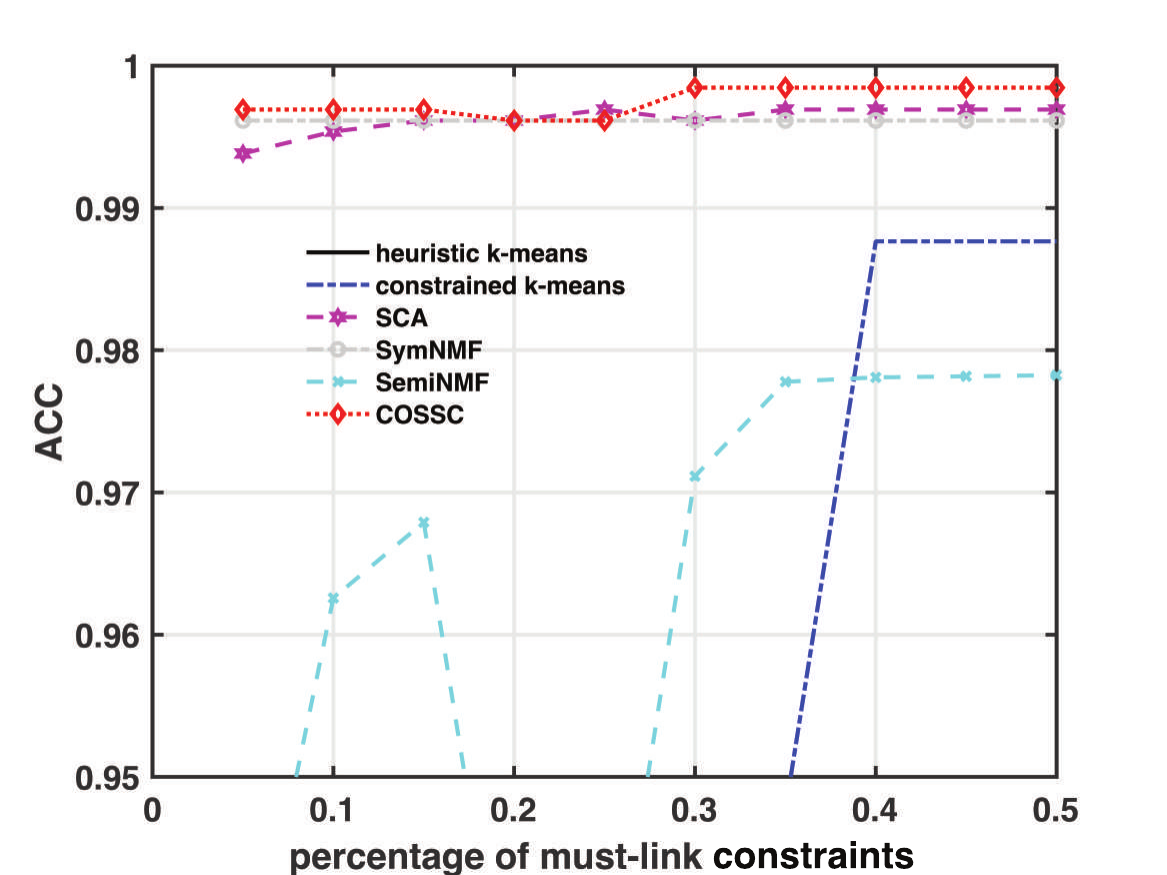}}\quad\quad
\subfigure{\includegraphics[height=35mm]{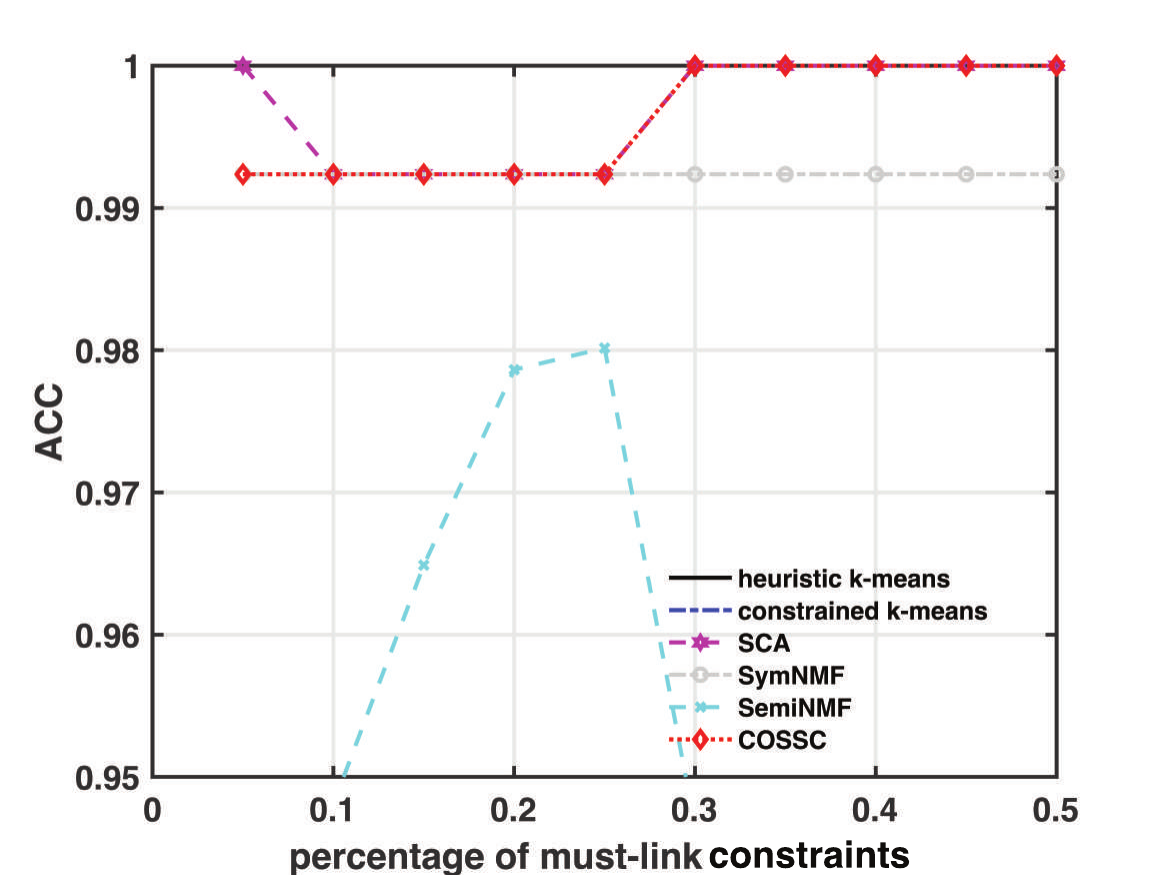}}\quad\\
\vspace{-3mm}
\subfigure{\includegraphics[height=35mm]{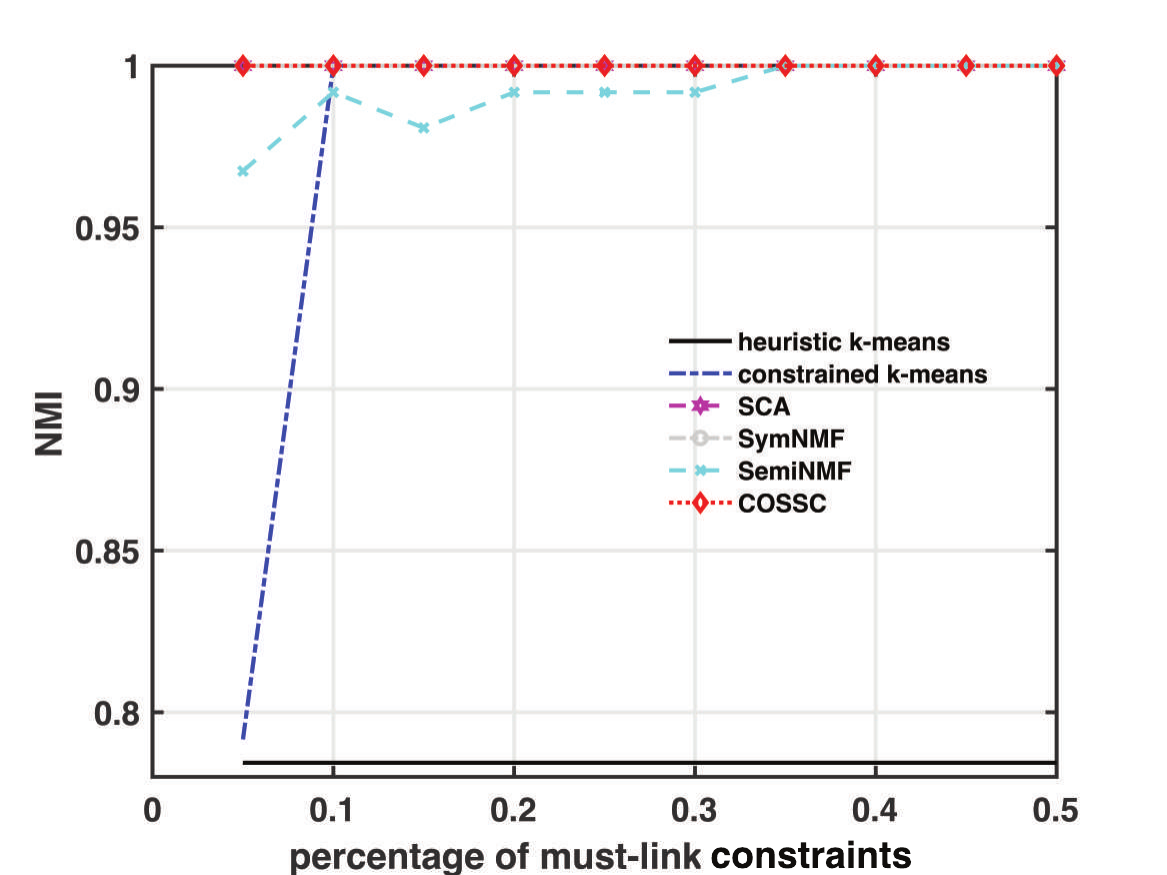}}\quad\quad
\subfigure{\includegraphics[height=35mm]{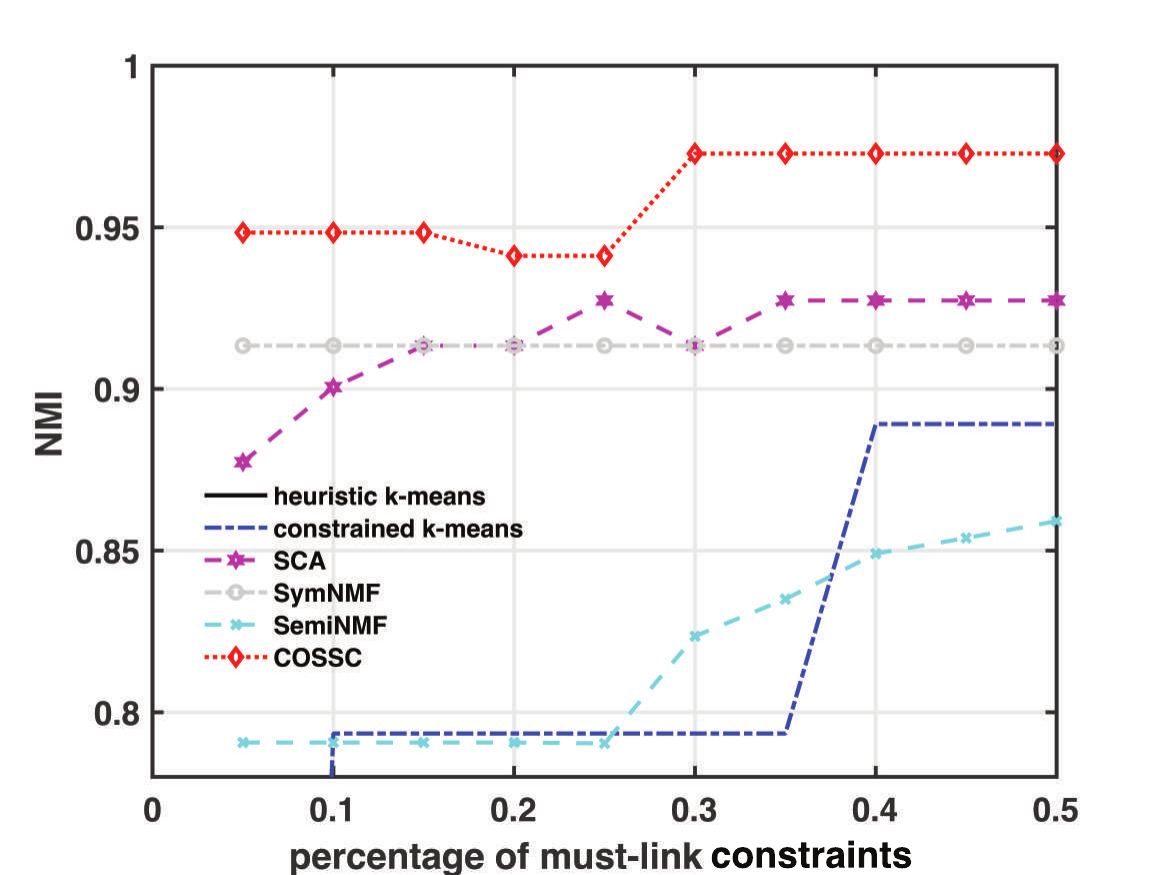}}\quad\quad
\subfigure{\includegraphics[height=35mm]{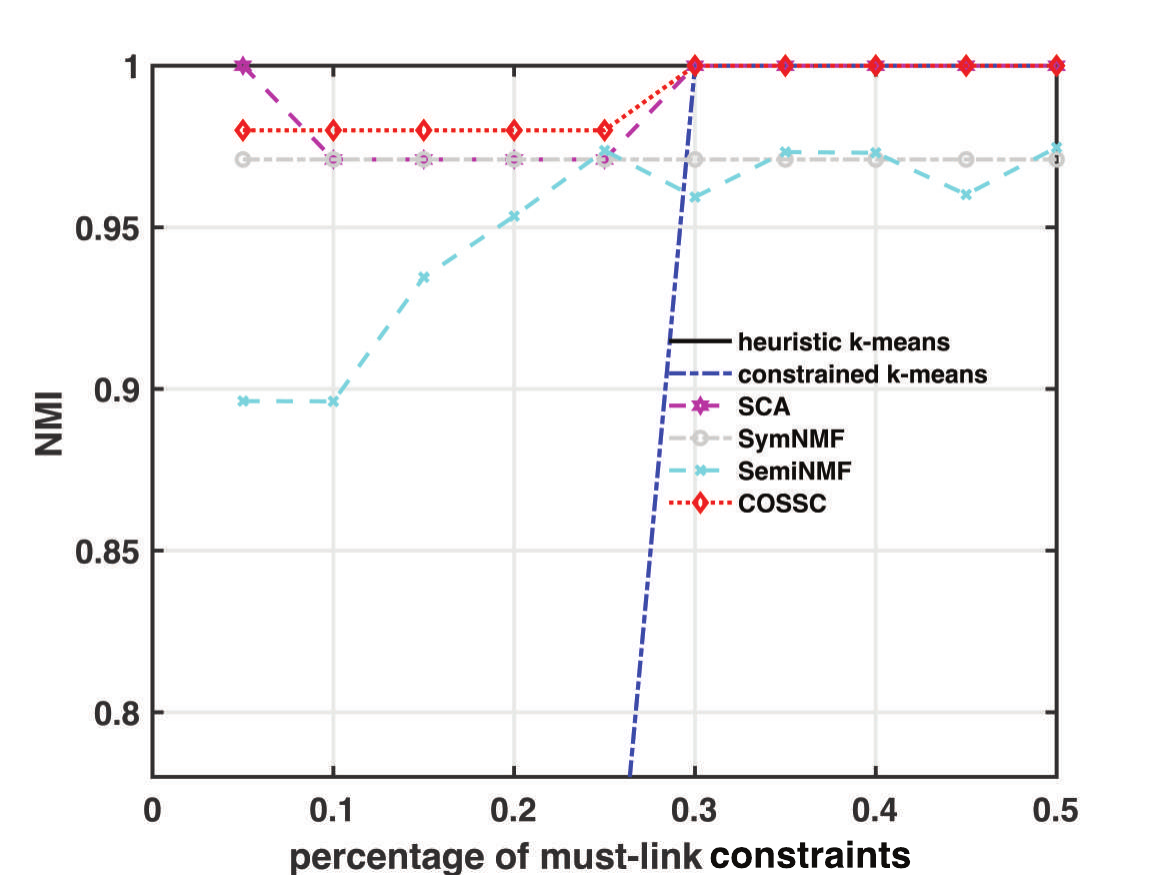}}\quad\\
\setcounter{subfigure}{0}
\vspace{-3mm}
\subfigure[$k^*$= 2]{\includegraphics[height=35mm]{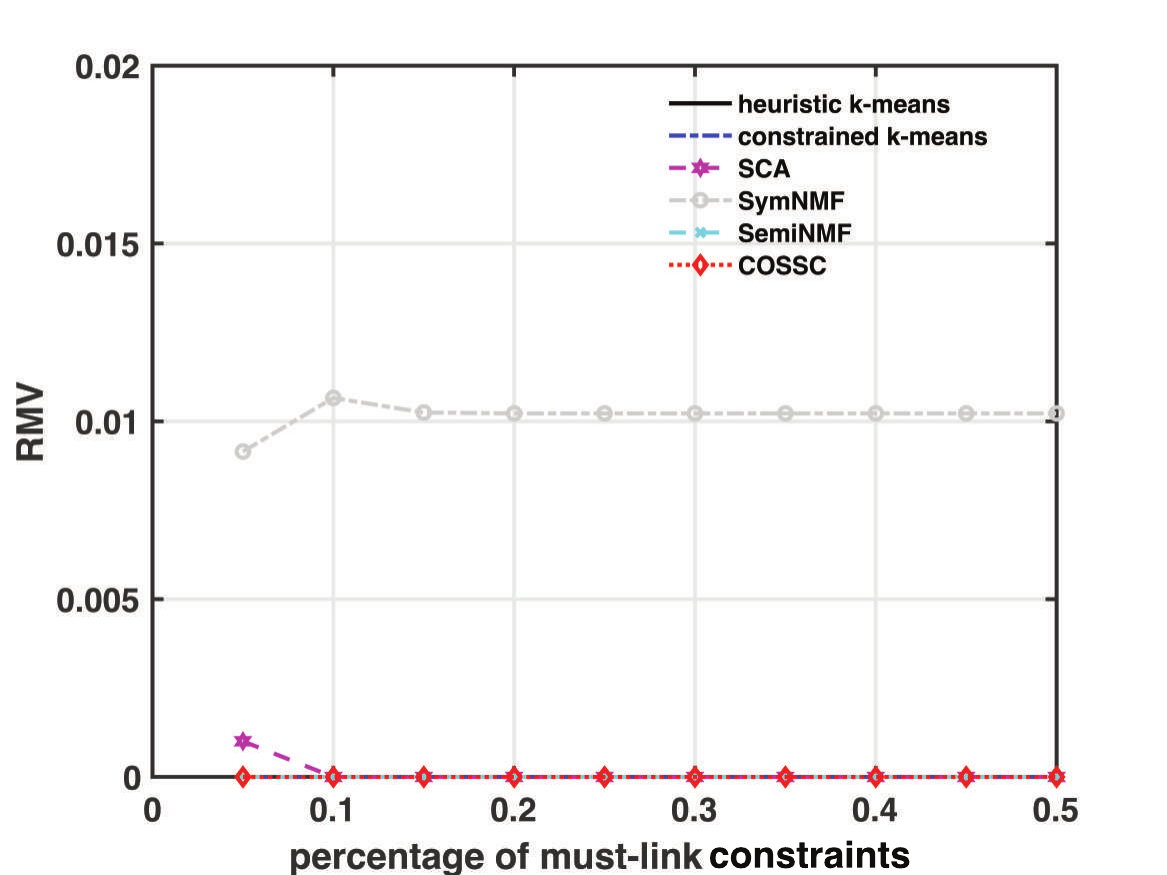}}\quad\quad
\subfigure[$k^*$= 3]{\includegraphics[height=35mm]{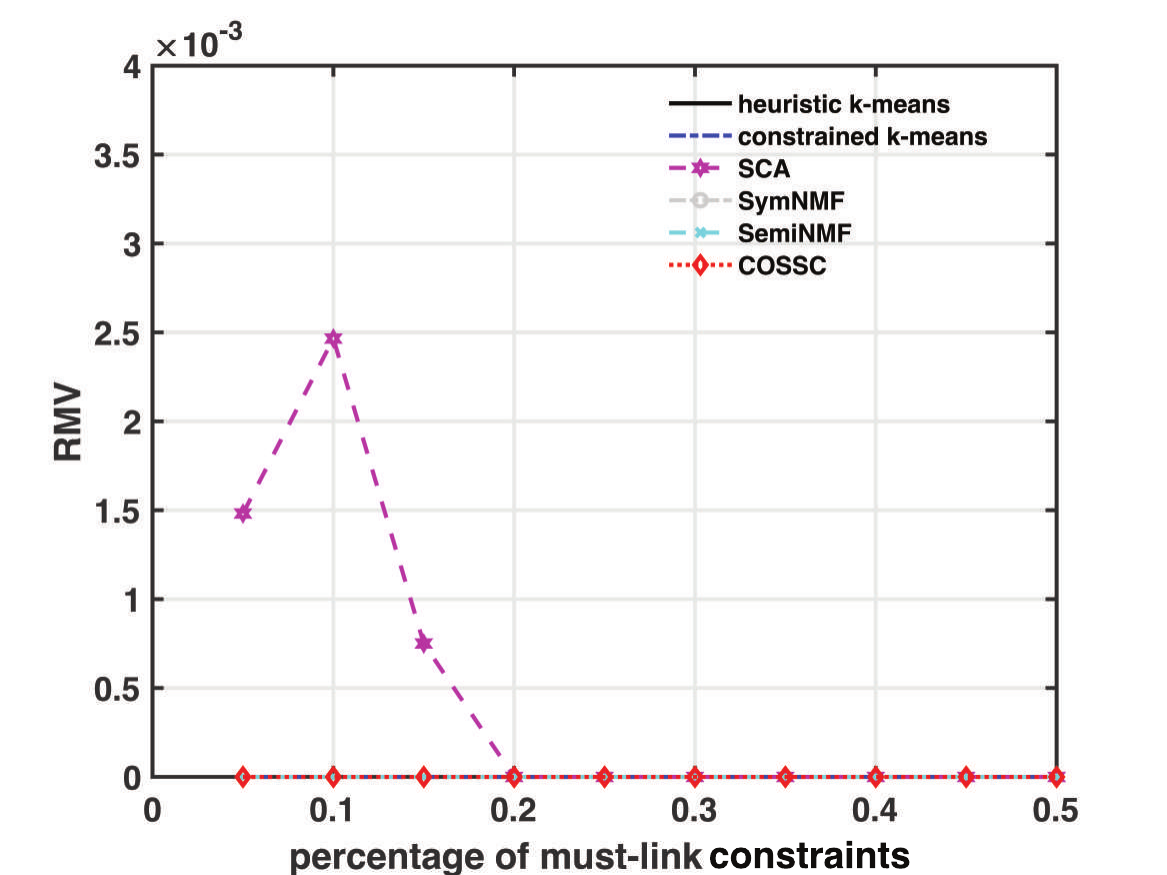}}\quad\quad
\subfigure[$k^*$= 4]{\includegraphics[height=35mm]{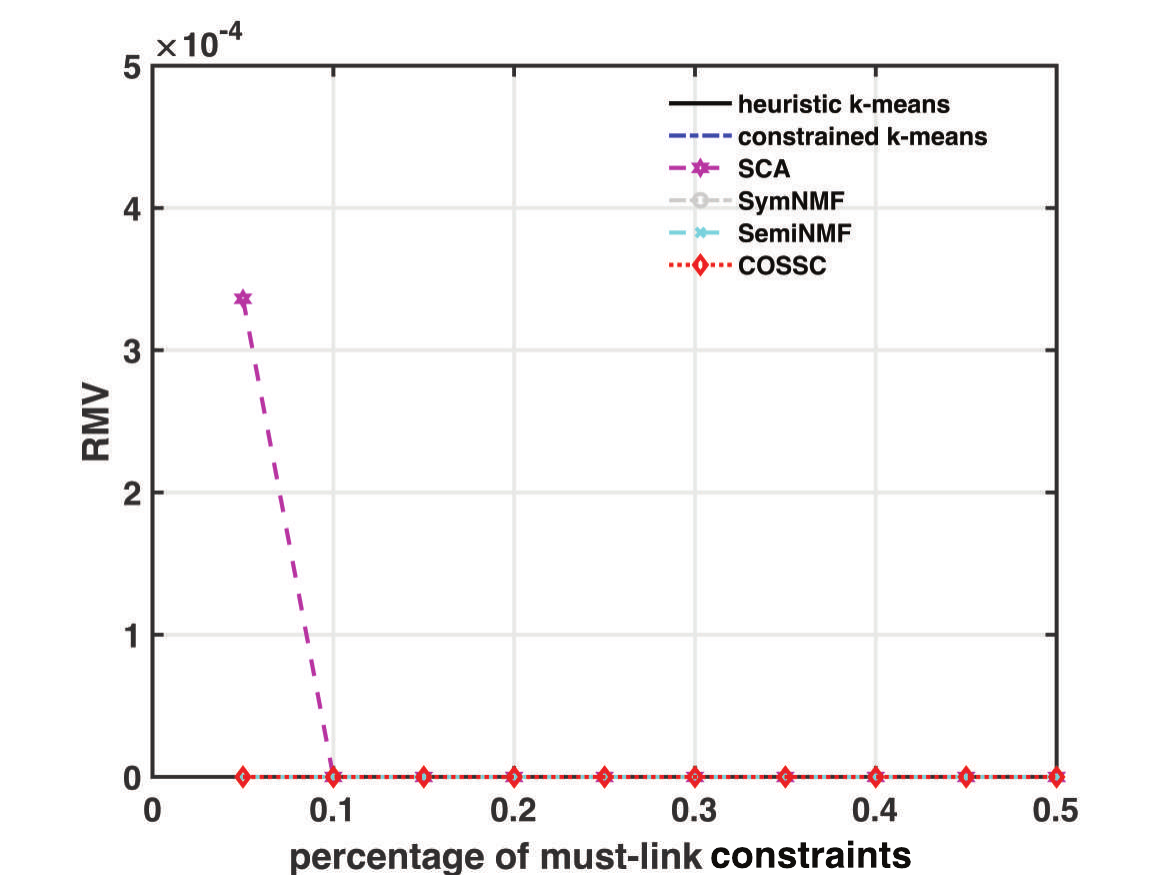}}
\caption{Comparisons of ACC, NMI, and RMV with different percentages of must-link constraints and $d=k=k^*$.}\label{fig:9beta2}
\end{figure}



\subsubsection{Comparisons of CPU time}\label{sec:cpu}

{In this subsection, we aim to show that COSSC is not only effective but also evidently more efficient compared with the other algorithms mentioned above.} We set the percentage $s$ of must-link constraints to be $25$. For each $k^*\in\{2,3,\ldots, 10\}$, we apply each clustering algorithm with $d=k=k^*$ to the 50 corresponding dataset.    Figure~\ref{fig:time}
shows how the average CPU time changes as $k^*$ increases. We observe that COSSC is the best among all the methods.
In addition to the CPU time, COSSC also performs well in terms of the values of ACC, NMI, and RMV, which are presented in Appendix~\ref{sec:overall}. 

\begin{figure}[tbp!]
\centering
\subfigure{\includegraphics[height=55mm]{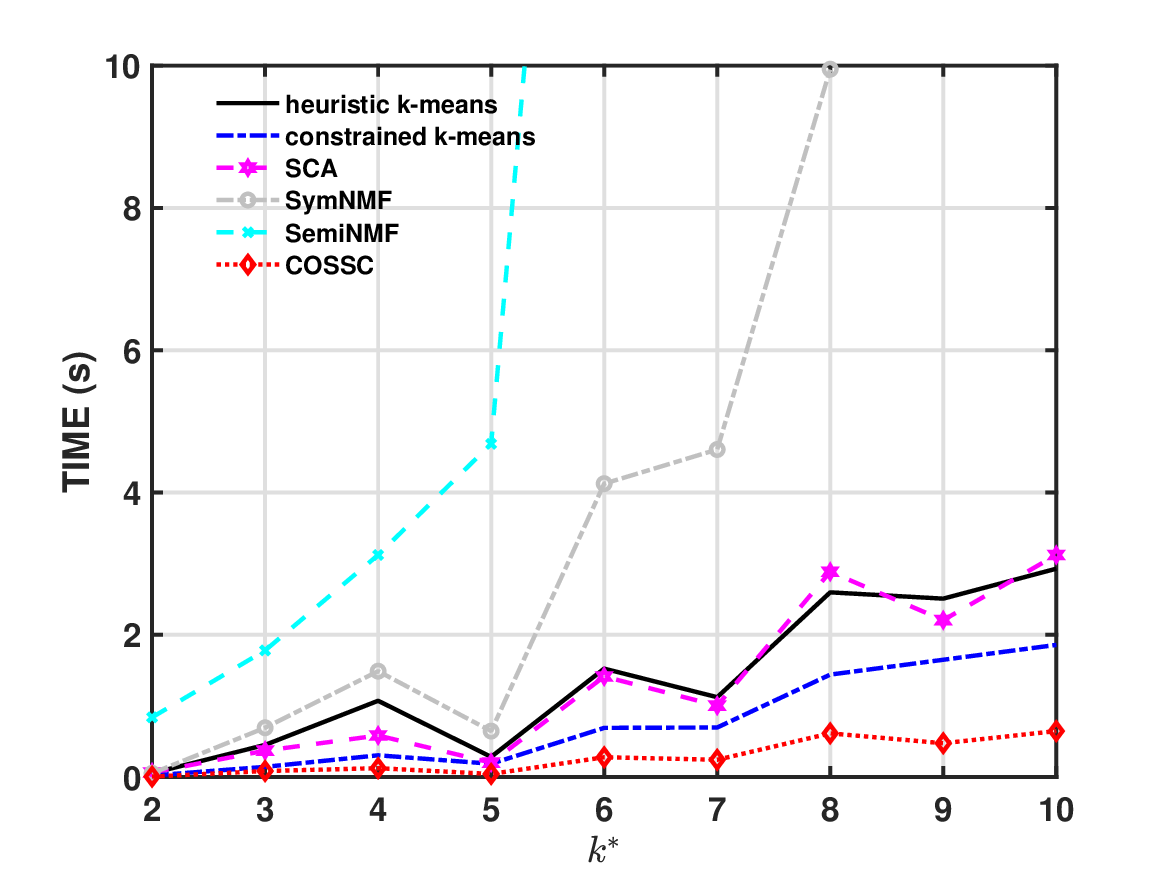}}
\caption{Comparison of TIME among COSSC and other algorithms with $d=k=k^*$.}\label{fig:time}
\end{figure}

\section{Conclusions and discussions}


\revise{
	In this paper, we mainly propose a semi-supervised clustering approach that eliminates the need for the ideal number of clusters as an input and ensures the satisfaction of must-link constraints under mild requirements on model parameters. By framing semi-supervised spectral clustering as a graph partitioning problem, we first introduce a novel continuous optimization model, analyze its theoretical properties, and then develop a block coordinate descent algorithm with guaranteed convergence to solve it. The resulting clusters can be directly obtained by applying a simple search algorithm to the solution of our model. Extensive numerical experiments demonstrate that our method outperforms several existing semi-supervised clustering techniques on both synthetic and real-world document clustering datasets.}
	
\revise{
	Finally, we note that this paper does not address another type of supervisory information: the cannot-link constraint (see \cite{zhu2005semi1}), 
	which prohibits two data points from belonging to the same cluster. Satisfying cannot-link constraints requires ensuring that no path exists between $x_i$ and $x_j$ for all cannot-link pairs $(i,j)$, a task that is inherently challenging due to its combinatorial nature. 
	 As demonstrated in \cite{davidson2007complexity}, even determining the feasibility of cannot-link constraints is an NP-hard problem. 
	 To tackle this challenge, a potential heuristic approach could involve extending~\eqref{eq:Aconstruct} by assigning negative weights to cannot-link constraints and replacing the Laplacian matrix with the signed Laplacian matrix~\cite{harary1953notion}. 
	 We leave this extension for future work.
}

\section*{Acknowledgements} 
The work of Xin Liu was supported in part by the National Key R\&D Program of China 2023YFA1009300, 
the National Natural Science Foundation of China (12125108, 12021001, 12288201), and RGC grant JLFS/P-501/24 for the CAS AMSS-PolyU Joint Laboratory in Applied Mathematics. The work of Zaikun Zhang was supported in part by the Hong Kong RGC grants 15305420, 15306621, and 15308623. The work of Michael Ng was supported in part by the Hong Kong RGC grants 17201020, 17300021, and C7004-21GF, and NSFC-RGC joint grant N-HKU76921.

\appendix

\section{The derivative of $f$ with respect to $Z$}
\label{appen:derive}
We use $\nabla_{Z} f(Z, H)$ to denote the derivative of $f$ with respect to $Z$.
Recalling the definition of $f$, we have 
\begin{equation*}
\begin{aligned}
	f(Z, H) &=  \langle \LL(A \circ Z), HH^\TT\rangle - \beta \langle
	Z, \bar{A}\rangle =  \langle  A\circ Z,  \LL^*(HH^\TT) \rangle- \beta \langle
	Z, \bar{A}\rangle
	\\ &=  \langle   Z, A\circ \LL^*(HH^\TT) \rangle - \beta \langle
	Z, \bar{A}\rangle = \langle  Z,  A\circ\LL^*(HH^\TT) - \beta \bar{A}\rangle,
\end{aligned}
\end{equation*}
where $\LL^*$ represents the Hermitian adjoint operator of $\LL$.  
In addition, for any matrices $M, N\in\R^{n\times n}$, it holds that 
\begin{align*}
\langle M, \LL^* (N) \rangle = &\langle \LL(M), N \rangle 
=  \sum_{i=1}^n\sum_{j\neq i} \LL(M)_{ij} N_{ij} + \sum_{i=1}^n \LL(M)_{ii} N_{ii}\\
=& -\sum_{i=1}^n\sum_{j\neq i} M_{ij}N_{ij}
+ \sum_{i=1}^n\Bigg(\sum_{j\neq i} M_{ij}\Bigg) N_{ii}
=  \sum_{i=1}^n\sum_{j=1}^n M_{ij}(N_{ii}-N_{ij}),
\end{align*}
which implies $(\LL^* (N))_{ij}=N_{ii}-N_{ij}$ for all $i,j=1,2,\ldots,n$.
Combining these facts, we have
\begin{align}
\label{eq:derivef}
(\nabla_{Z} f(Z, H))_{ij} = A_{ij}(\LL^*(HH^\TT))_{ij} - \beta \bar{A}_{ij} = A_{ij}((HH^\TT)_{ii}- (HH^\TT)_{ij}) - \beta \bar{A}_{ij}.
\end{align}

\section{Proof of Theorem \ref{thm:thmor}}\label{appen:1}

 {The following lemmas will be used in the proofs of Theorem \ref{thm:thmor}. 
	\begin{lemma}[{\cite[Proposition 2]{von2007tutorial}}]
		\label{lem:b1}
		Given two matrices $A_1,A_2\in\R^{n\times n}$.
		If $A_1\ge A_2$, then $\LL(A_1)\succeq \LL(A_2)$. In particular, if $A_1\ge 0$, then $\LL(A_1)\succeq 0$.
\end{lemma}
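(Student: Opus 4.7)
The plan is to prove the ``in particular'' clause first by a direct quadratic-form computation, and then bootstrap the monotonicity statement via linearity of $\LL$. Throughout I would work under the implicit symmetry convention of the paper, since $\LL$ is used on matrices in $\sn$, so that $\LL(A_1)$ and $\LL(A_2)$ are automatically symmetric and the ordering $\succeq$ is meaningful.

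First I would establish that $\LL(A) \succeq 0$ whenever $A$ is symmetric with nonnegative entries. For any $x \in \R^n$, using the definition $\LL(A) = \Diag(Ae_n) - A$,
\[
x\zz \LL(A) x \;=\; \sum_{i=1}^n x_i^2 (Ae_n)_i - \sum_{i,j=1}^n A_{ij} x_i x_j \;=\; \sum_{i,j=1}^n A_{ij}\, x_i^2 - \sum_{i,j=1}^n A_{ij}\, x_i x_j.
\]
Using the symmetry $A = A\zz$ to symmetrize the first double sum via $A_{ij} x_i^2 = \tfrac12 A_{ij}(x_i^2 + x_j^2)$ after an index swap, this collapses to the standard identity
\[
x\zz \LL(A) x \;=\; \frac{1}{2} \sum_{i,j=1}^n A_{ij}\, (x_i - x_j)^2,
\]
which is nonnegative term by term when $A \ge 0$. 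Hence $\LL(A) \succeq 0$.

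For the main comparison statement, I would invoke the linearity of $\LL$ to write $\LL(A_1) - \LL(A_2) = \LL(A_1 - A_2)$. The hypothesis $A_1 \ge A_2$ means that $A_1 - A_2$ is componentwise nonnegative, and it is symmetric whenever $A_1$ and $A_2$ are. Applying the already-proved special case to $A_1 - A_2$ then yields $\LL(A_1 - A_2) \succeq 0$, i.e.\ $\LL(A_1) \succeq \LL(A_2)$.

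There is essentially no serious obstacle. The only point requiring some care is the symmetrization step that rewrites $\sum_{i,j} A_{ij} x_i^2$ as $\tfrac12 \sum_{i,j} A_{ij}(x_i^2+x_j^2)$, which uses the symmetry of $A$. Since every similarity matrix treated in this paper lies in $\sn$, this symmetry is automatic, and both parts of the lemma follow immediately.
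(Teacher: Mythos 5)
Your proof is correct: the quadratic-form identity $x\zz \LL(A) x = \tfrac12\sum_{i,j}A_{ij}(x_i-x_j)^2$ plus linearity of $\LL$ is exactly the standard argument behind this lemma, which the paper does not prove itself but simply delegates to the cited reference \cite[Proposition 2]{von2007tutorial}. Your remark that symmetry of $A_1-A_2$ is needed for the symmetrization step (and is automatic in the paper's setting, where all matrices involved lie in $\sn$) is the right point of care, and nothing further is required.
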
}

\begin{lemma}
		\label{lem:b2}
	Given two matrices $A_1,A_2\in\R_+^{n\times n}$.
	If it holds $\supp(A_1) =  \supp(A_2) $, then $\rank(\LL(A_1 )) = \rank(\LL(A_2)).$
\end{lemma}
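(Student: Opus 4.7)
The plan is to reduce the claim to a purely combinatorial statement about the underlying graph by invoking the same tool that was already used in the proof of~\eqref{eq:rank22}, namely \cite[Theorem 2.1]{Mohar_1991}, which states that for any nonnegative symmetric matrix $A$, the quantity $n-\rank(\LL(A))$ equals the number of connected components of the graph $\mathcal{G}(X,A)$.

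The key observation is that the edge set of $\mathcal{G}(X,A)$ is defined purely in terms of $\supp(A)$, so the collection of connected subgraphs of $\mathcal{G}(X,A_1)$ and $\mathcal{G}(X,A_2)$ are identical as long as $\supp(A_1)=\supp(A_2)$; in particular, they share the same number of connected components. Applying the Mohar characterization twice then yields
\begin{equation*}
n-\rank(\LL(A_1)) \;=\; \#\{\text{connected components of }\mathcal{G}(X,A_1)\} \;=\; \#\{\text{connected components of }\mathcal{G}(X,A_2)\} \;=\; n-\rank(\LL(A_2)),
\end{equation*}
from which the desired equality follows at once.

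There is essentially no obstacle here: the lemma is a direct specialization of the argument already employed earlier in the paper for~\eqref{eq:rank22}, and it requires no new machinery beyond the cited result. If one wished to give a self-contained argument without appealing to \cite{Mohar_1991}, the natural alternative would be to observe that the null space of $\LL(A)$ consists exactly of the vectors that are constant on each connected component of $\mathcal{G}(X,A)$ (using the standard quadratic-form identity $x\zz \LL(A) x = \tfrac{1}{2}\sum_{i,j} A_{ij}(x_i-x_j)^2$ together with nonnegativity of $A$), so that $\Null(\LL(A))$, and hence $\rank(\LL(A))$, depends on $A$ only through $\supp(A)$. Either route closes the proof in a few lines.
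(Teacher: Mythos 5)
Your proposal is correct and matches the paper's approach exactly: the paper states that the proof of this lemma is similar to that of~\eqref{eq:rank22}, which likewise observes that the connected components of the associated graphs coincide because they depend only on the support sets, and then applies \cite[Theorem 2.1]{Mohar_1991} to equate $n-\rank(\LL(A_1))$ and $n-\rank(\LL(A_2))$ with the common number of components. Your optional self-contained argument via the null space of the Laplacian is a valid alternative but not needed.
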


The proof of Lemma \ref{lem:b2} is similar to \eqref{eq:rank22}. We now present the proof of Theorem \ref{thm:thmor}.
\begin{proof}
Since $ \LL(A\circ Z^*)\succeq 0 $, it holds that 
$
\tr ((H^*)\zz \LL(A\circ Z^*) H^*) \geq 0$.
We then consider two cases based on the value of this trace.

If $
\tr ((H^*)\zz \LL(A\circ Z^*) H^*) >0$,
the optimality of $H^*$ together with the Rayleigh-Ritz theorem imply that $\rank(\LL({A} \circ Z^*) ) > n-d$.  In this case, (a) holds, and (b) does not hold.

Now we consider the case when $
\tr ((H^*)\zz \LL(A\circ Z^*) H^*) =0$. In this case, we will show that~(a) does not hold, and (b)  holds.
To this end, we first prove that there exists a matrix $\bar{Z}$ satisfying 
$
\bar{Z} \ge Z^*$, $\bar{Z} \in\snA \cap\{0, 1\}^{n\times n}$, and $ \rank(\LL({A} \circ \bar{Z}) ) = n-d. 
$ 
We will achieve this by constructing a finite sequence of matrices $\{Y_l\}\subset \snA \cap \{0, 1\}^{n\times n}$ with $Y_0= \lceil Z^* \rceil$, such that 
\begin{equation}
	\rank(\LL(A\circ Y_{l})) \le \rank(\LL(A \circ Y_{l+1}) ) \le \rank(\LL(A\circ Y_{l})) +1
	\label{eq:r2}	
\end{equation}
for all $l\geq 0$ before termination. For each $l\ge 0$, if $Y_l \neq \mathrm{sign}(A)$, we choose $(\bar{i},\bar{j})\in \supp(A)\setminus \supp(Y_l)$, set $(Y_{l+1})_{\bar{j}\bar{i}}=(Y_{l+1})_{\bar{i}\bar{j}}=1$, and the other entries of $Y_{l+1}$ to be the same as $Y_l$.
Otherwise, if $Y_l= \mathrm{sign}(A)$, we terminate the construction, which will happen in finitely many iterations since $\supp(A)$ is finite.
For all $Y_{l}$,
it  holds that $Y_{l}\in \snA \cap \{0, 1\}^{n\times n}$ and $ Y_l\geq Z^*$.  
Consider graphs $\mathcal{G}(X, {A\circ Y_l}) $ and $\mathcal{G}(X, {A\circ Y_{l+1}})$ for any $l\geq 0$ before termination. The latter graph has one more edge. Hence, the number of connected subgraphs in the latter is not less than   in the former,
the difference being at most 1.
This leads to \eqref{eq:r2} according to~\cite[Theorem 2.1]{Mohar_1991}.
The existence of $\bar{Z}$ is then justified, because the sequence $\{Y_l\}$ begins with $\lceil Z^* \rceil$ and ends with $\sign(A)$, and $\rank(\LL({A} \circ \lceil Z^* \rceil) )= \rank(\LL({A} \circ Z^*) ) \leq n-d\leq \rank(\LL({A}) ) = \rank(\LL({A} \circ {\sign(A)}) )$.  Here, the equalities result from Lemma \ref{lem:b2}, the first inequality follows from the Rayleigh-Ritz theorem and $
\tr ((H^*)\zz \LL(A\circ Z^*) H^*) =0$, and the second inequality uses the assumption on $d$.
 
Now, let $\bar{H}$ be a solution to
$$
\min_{H\zz  H = I_{d}} \tr (H\zz  \LL({A}\circ \bar{Z}) H).
$$
Since $\rank(\LL({A} \circ \bar{Z}) ) = n-d$, it holds 
$$\tr (\bar{H}\zz\LL({A}\circ \bar{Z}) \bar{H})=0=\tr((H^*)\zz \LL({A}\circ
Z^*) H^*), 
$$
where the first equality follows from the Rayleigh-Ritz theorem, and the second one is  our setting in this case.
Consequently, the global optimality of $(Z^*, H^* ) $ for problem~\eqref{eq:model} implies
$
\tr(\bar{A} \bar{Z}) \le  \tr(\bar{A}Z^*).
$ 
In other words, $\sum_{i,j}(\bar{A}\circ \bar{Z})_{ij} \le \sum_{i,j}(\bar{A}\circ {Z^*})_{ij}$.
Besides, $\bar{A} \circ \bar{Z} \ge \bar{A} \circ Z^*$ by
 $\bar{Z} \ge Z^*$ and $\bar{A}\ge 0$.
Thus
$
\bar{A} \circ \bar{Z} =\bar{A} \circ Z^*.
$
Since $\bar{Z},Z^*\in\snA  = \mathcal{S}^n_{\bar A}$, we conclude that
$
\bar{Z} = Z^*.
$
Hence we have $\rank(\LL({A} \circ Z^*) ) = \rank(\LL({A} \circ \bar{Z}) ) = n-d$ and $Z^* \in \{0, 1\}^{n\times n}$. Thus case (b) holds.

The proof is completed by combining the two cases.
\end{proof}

\section{Proof of Theorem \ref{thm:bigb}}\label{appen:2}

\begin{proof} 
(a) We define $\phi(Z) := \min_{H\zz  H=I_{d}}f(Z, H).$
By the Rayleigh-Ritz theorem, we have
$$
\phi(Z) = \sum_{i=1}^d \lambda_i \left(\LL(A\circ Z)\right) - \beta \tr(\bar{A}Z). 
$$
Noticing 
$\tr(\bar{A}Z)
=\tr (\LL(\bar{A}\circ Z))
$ by the definition of~$\LL$,
we have
\begin{align*}
	\small
	\phi(Z) &= \tr(\LL(A\circ Z) ) - \beta \tr (\LL(\bar{A}\circ Z)) -\sum\limits_{i=d+1}^{n}\lambda_i (\LL(A\circ Z)) \\
	&= \tr(\LL((A - \beta \bar{A}) \circ Z)) -\sum\limits_{i=d+1}^{n}\lambda_i (\LL(A\circ Z)).
\end{align*}

Suppose for contradiction that $Z^*\neq  \sign(A)$. By the linearity of $\tr$ and $\LL$, we have
\begin{equation}\label{eq:signminusz}
	\begin{aligned}
		&\phi(\sign(A) ) - \phi(Z^*) \\=& \tr\left( \LL \left((A-\beta \bar{A}) \circ (\sign(A)-Z^*) \right) \right) - \sum\limits_{i=d+1}^{n}\lambda_i (\LL(A\circ \sign(A) )) +\sum\limits_{i=d+1}^{n}\lambda_i (\LL(A\circ Z^*)). 
	\end{aligned}
\end{equation}
Since $Z^*\in\snA$, we have $ \sign(A)  - Z^* \ge 0$; since $\beta \ge 1$ and $A\le~\bar{A}$ (see~\eqref{eq:Aconstruct}), 
we have $A -~\beta \bar{A} \le~0$. Thus $(A-\beta \bar{A}) \circ ( \sign(A) -Z^*) \le~0$. 
In addition, by the fact that $Z^*\neq  \sign(A) $, there exists $(\bar i,\bar j)\in \mathrm{supp}(A)$ such that $( \sign(A)-Z^*)_{\bar i \bar j} > 0$ and $(A-\beta \bar{A})_{\bar i\bar j}<0$.
Therefore,
\begin{equation}
	\label{eq:fact1}
	\tr\left( \LL \left((A-\beta \bar{A}) \circ ( \sign(A)-Z^*) \right) \right) = \sum_{i,j=1}^n\left((A-\beta \bar{A}) \circ ( \sign(A)  -Z^*) \right)_{ij}< 0.
\end{equation}
$\sign(A) - Z^*\ge 0 $ also implies that $ \LL({A}\circ  \sign(A)) \succeq \LL(A\circ Z^*)
$ by Lemma \ref{lem:b1}.
Hence we have $\lambda_i (\LL({A}\circ\sign(A))) \ge
\lambda_i (\LL(A\circ Z^*)) $ for all $1\le i \le n$ by Weyl's inequality~\cite{weyl1912asymptotische}. 
Recalling~\eqref{eq:signminusz} and~\eqref{eq:fact1},  we have $\phi(\sign(A)) - \phi(Z^*)<~0$. This contradicts the optimality of $Z^*$.

Therefore, $Z^* = \sign(A) $ is the unique optimal solution to problem $\min_{Z\in\snA\cap [0,1]^{n\times n}} \phi(Z)$, which completes the proof of (a).

(b) Let $$\overline{\beta}=\frac{1}{\alpha\sum_{i,j}A_{ij}}\min \left\{\lambda_{+}(\mathcal{L}({A} \circ Z)) \mid Z \in \snA \cap\{0,1\}^{n \times n}, \mathcal{L}({A} \circ Z) \neq 0\right\},$$ where $\alpha:=p\max\{1, \sum_{i,j}A_{ij}/(n^2\min_{(i,j)\in\mathcal{J}}A_{ij})\}$, and $\lambda_{+}(\mathcal{L}({A} \circ Z)) $ is the smallest positive eigenvalue of $\mathcal{L}({A} \circ Z)$. We prove the desired result with $0<\beta<\overline{\beta}$.
Assume for contradiction that $\rank(\LL(A\circ Z^*))>n-d$. 
Since the support set of $A\circ Z^*$ equals to that of $A\circ \lceil Z^*\rceil$,
we know from Lemma \ref{lem:b2} that 
$\rank(\LL(A\circ \lceil Z^*\rceil )) = \rank(\LL(A\circ Z^*))>n-d.$
By $(H^*)\zz H^*=I_{d}$, the positive semi-definiteness of $\LL({A}\circ
\lceil{Z^*}\rceil)$ (Lemma \ref{lem:b1}), and the Rayleigh-Ritz theorem, we know that $\tr\left((H^*)\zz \LL({A}\circ \lceil{Z^*}\rceil)
H^*\right) \ge \lambda_{d}(\LL({A}\circ \lceil{Z^*}\rceil))>0 $.
Recalling the definition of $\overline{\beta}$, we obtain
$$
\tr\left((H^*)\zz \LL({A}\circ \lceil{Z^*}\rceil)
H^*\right) \ge \alpha \overline{\beta} \sum_{i,j}A_{ij} .$$
It then yields
\begin{align}
	\notag
	f(\lceil{Z^*}\rceil, \, H^* ) 
	= & \tr\left((H^*)\zz \LL(A\circ \lceil{Z^*}\rceil)H^*\right) - \beta\tr(\bar{A}\lceil{Z^*}\rceil) \ge  \alpha \overline{\beta} \sum_{i,j}A_{ij}- \beta \tr(\bar{A}\lceil{Z^*}\rceil) 
	\\ 
	\ge &  \alpha\overline{\beta} \tr({A}\lceil{Z^*}\rceil) - \beta \tr(\bar{A}\lceil{Z^*}\rceil) 
	= \tr(\alpha\overline{\beta} {A}\lceil{Z^*}\rceil- \beta\bar{A}\lceil{Z^*}\rceil)
	 \ge 
	0 =f(0, H^* ),
	\label{eq:fge}
\end{align}
where the second inequality is due to $Z^*\in\{0,1\}^{n\times n}$, and the last one comes from 
$\bar{A}\leq \alpha A$ and $\beta <  \overline{\beta}$.
By the global optimality of $(Z^*,H^*)$ and~Theorem \ref{thm:x}, $(\lceil{Z^*}\rceil, H^* )$ is also a global optimal solution to problem~\eqref{eq:model}. We then obtain from \eqref{eq:fge} that {$(0, H^* ) $} is also a global optimal solution to problem~\eqref{eq:model}.   This implies $0= \rank(\LL(A\circ 0))\geq n-d$ by Theorem~\ref{thm:thmor}, which contradicts with our assumption $d< n$. Recalling Theorem~\ref{thm:thmor}(b), we  complete the proof of part (b).
\end{proof}

\section{Comparisons of ACC, NMI and RMV on TDT2}\label{sec:overall}

In this section, we compare the performance of six methods tested in Subsection~\ref{sec:cpu} on the full TDT test set using ACC, NMI, and RMV.
The numerical results are shown in  Tables~\ref{tab:TDTacc}--\ref{tab:TDTrmv}. {We adopt the same numerical settings as in Subsection~\ref{sec:cpu}.}
From these tables, we observe that in most cases, COSSC achieves the highest ACC and NMI values across all methods in this experimental setup, while its RMV remains consistently zero.

\begin{table}[H]
\centering
\setlength{\tabcolsep}{4pt}
\caption{Comparison of Overall Performance on ACC.}\label{tab:TDTacc}
\begin{tabular}{|c|c|c|c|c|c|c|c|c|c|c|}
	&2&3&4&5&6&7&8&9&10\\
	\hline
	heuristic $k$-means &0.876 &0.754 &0.757 &0.723 &0.657 &0.687 &0.640 &0.626 &0.639\\
	constrained $k$-means &0.893 &0.862 &0.887 &0.830 &0.827 &0.788 &0.788 &0.723 &0.704\\ 
	SCA &  \textbf{0.998} &\textbf{0.997} &0.958 &0.966 &0.929 &0.915 &0.931 &0.853 &0.928\\
	SymNMF & 0.997 &0.997 &0.939 &0.918 &0.930 &0.890 &0.913 &0.896 &0.909\\
	SemiNMF  &0.979 &0.947 &0.948 &0.933 &0.891 &0.893 &0.828 &0.911 &0.831\\
	COSSC & 0.991 &\textbf{0.997} &\textbf{0.986} &\textbf{0.989} &\textbf{0.964} &\textbf{0.987} &\textbf{0.952} &\textbf{0.956} &\textbf{0.955}
\end{tabular}
\end{table}
\vspace{-5mm}
\begin{table}[H]
\centering
\setlength{\tabcolsep}{4pt}
\caption{Comparison of Overall Performance on NMI.}\label{tab:TDTmut}
\begin{tabular}{|c|c|c|c|c|c|c|c|c|c|c|}
	&2&3&4&5&6&7&8&9&10\\
	\hline
	heuristic $k$-means &0.685 &0.650 &0.747 &0.738 &0.700 &0.756 &0.718 &0.716 &0.749\\
	constrained $k$-means &0.900 &0.833 &0.767 &0.767 &0.800 &0.800 &0.867 &0.867 &0.800\\ 
	SCA &  \textbf{0.984} &0.973 &0.929 &0.935 &0.910 &0.901 &0.905 &0.867 &0.925\\
	SymNMF & 0.978 &0.974 &0.890 &0.893 &0.897 &0.887 &0.889 &0.877 &0.900\\
	SemiNMF  &0.893 &0.833 &0.886 &0.906 &0.857 &0.879 &0.814 &0.891 &0.843\\
	COSSC & 0.980 &\textbf{0.980} &\textbf{0.946} &\textbf{0.971} &\textbf{0.942} &\textbf{0.966} &\textbf{0.933} &\textbf{0.932} &\textbf{0.943}
\end{tabular}
\end{table}

\begin{table}[H]
\centering
\caption{Comparison of Overall Performance on RMV.}\label{tab:TDTrmv}
\begin{tabular}{|c|c|c|c|c|c|c|c|c|c|c|}
	&2&3&4&5&6&7&8&9&10\\
	\hline
	heuristic $k$-means &0.058&0.236&0.330&0.360&0.464&0.490&0.591&0.708&0.672\\
	constrained $k$-means &  0 & 0 &0 &0 &0 &0 &0 &0 &0\\ 
	SCA &  0 & 0 &0 &0 &0 &0 &0 &0 &0 \\ 
	SymNMF &0&0.051&0.066&0.079&0.163&0.117&0.152&0.169&0.202\\
	SemiNMF  & 0 & 0 &0 &0 &0 &0 &0 &0 &0\\
	COSSC & 0 & 0 &0 &0 &0 &0 &0 &0 &0  \\
\end{tabular}
\end{table}

\bibliographystyle{plain}
\bibliography{ref_comscp.bib}


\end{document}